\documentclass[11pt,letterpaper]{amsart}
\usepackage[top=1.15in,bottom=1.15in,left=1.15in,right=1.15in,marginpar=1in]{geometry}
\geometry{letterpaper}                   
\usepackage{graphicx}
\usepackage{amssymb}
\usepackage{epstopdf}
\usepackage[mathcal]{eucal}
\usepackage{hyperref}
\usepackage{enumitem}

\usepackage{tikz-cd}
\usepackage{diagbox}
\usepackage{tkz-euclide} 
\usepackage[all,cmtip]{xy}

\usepackage[textsize=tiny]{todonotes}


\usepackage{scalerel}

\DeclareGraphicsRule{.tif}{png}{.png}{`convert #1 `dirname #1`/`basename #1 .tif`.png}

\makeatletter
\newcommand*{\bigcdot}{}
\DeclareRobustCommand*{\bigcdot}{%
  \mathbin{\mathpalette\bigcdot@{}}%
}
\newcommand*{\bigcdot@scalefactor}{.75}
\newcommand*{\bigcdot@widthfactor}{1.15}
\newcommand*{\bigcdot@}[2]{%
  \sbox0{$#1\vcenter{}$}
  \sbox2{$#1\cdot\m@th$}%
  \hbox to \bigcdot@widthfactor\wd2{%
    \hfil
    \raise\ht0\hbox{%
      \scalebox{\bigcdot@scalefactor}{%
        \lower\ht0\hbox{$#1\bullet\m@th$}%
      }%
    }%
    \hfil
  }%
}
\makeatother
\newcommand{\nc}{\newcommand}
\nc{\dmo}{\DeclareMathOperator}
\nc{\rnc}{\renewcommand}
\nc{\nt}{\newtheorem}

\nt{theorem}{Theorem}[section]
\nt*{ubertheorem}{Nielsen--Thurston \"Ubertheorem}
\nt{proposition}[theorem]{Proposition}
\nt{corollary}[theorem]{Corollary}
\nt{lemma}[theorem]{Lemma}

\setcounter{topnumber}{1}     
\setcounter{bottomnumber}{1}  
\setcounter{totalnumber}{2}   

\let\ORIincludegraphics\includegraphics
\renewcommand{\includegraphics}[2][]{\ORIincludegraphics[scale=0.91666,#1]{#2}}

\dmo{\Teich}{Teich}
\dmo{\PMod}{PMod}
\dmo{\Crit}{Crit}

\nc{\F}{\mathcal F}
\nc{\rpt}{{\mathbb R}{\mathrm P}^2}
\rnc{\O}{\mathcal O}

\nc{\A}{\mathcal A}
\nc{\Y}{\mathcal Y}
\nc{\T}{\mathcal T}
\nc{\X}{\mathcal X}

\nc{\C}{\mathbb C}
\nc{\Z}{\mathbb Z}
\rnc{\H}{\mathbb H}
\nc{\R}{\mathbb R}

\nc{\HM}{\mathcal {HM}}
\nc{\SHM}{\mathcal {SHM}}

\nc{\p}[1]{\bigskip\noindent\emph{#1.}}

\title[Thurston's theorem and the Nielsen--Thurston classification]{Thurston's theorem and the Nielsen--Thurston classification via Teichm\"uller's theorems}
\author{James Belk}
\author{Dan Margalit}
\author{Rebecca R. Winarski}
\address{James Belk \\ School of Mathematics \& Statistics\\ 15 University Gardens \\ University of Glasgow \\ G12 8QW \\ james.belk@glasgow.ac.uk}
\address{Dan Margalit \\ Department of Mathematics\\ Vanderbilt University \\ 1326 Stevenson Center Ln \\ Nashville, TN 37240 \\ dan.margalit@vanderbilt.edu}
\address{Rebecca R. Winarski \\  Department of Mathematics and Computer Science\\ College of the Holy Cross\\
1 College Street
Worcester, MA 01610 \\ rebecca.winarski@gmail.com}

\thanks{This material is based upon work supported by the National Science Foundation under Grant Nos.\ DMS-1854367, DMS-1928930, DMS-2002951, and DMS-2203431 and the Engineering and Physical Sciences Research Council under Grant No.\ EP/R032866/1.}

\begin{document}

\maketitle

\begin{abstract}
We give a unified and self-contained proof of the Nielsen--Thurston classification theorem from the theory of mapping class groups and Thurston's characterization of rational maps from the theory of complex dynamics (plus various extensions of these).  Our proof follows Bers' proof of the Nielsen--Thurston classification.
\end{abstract}

\section{Introduction}

The main theorem of this paper is what we call the Nielsen--Thurston \"Ubertheorem.  This is a unification, and extension, of the Nielsen--Thurston classification theorem from the theory of mapping class groups and Thurston's characterization of rational maps from the theory complex dynamics.  The unified statement we give here is new, although the content is almost entirely due to Thurston.  We give a unified proof of the \"Ubertheorem by extending the Bers proof of the Nielsen--Thurston classification \cite{bers,primer} to the case of nontrivial (branched) covers, possibly with marked points that are not post-critical.  In Appendix~\ref{sec:ext}, we also extend the theorem to treat the cases of non-orientable surfaces, orientation-reversing maps, and equivariant maps.

Thurston proved his characterization of rational maps in 1982 and gave several lectures on the proof.  The first published proof was given by Douady and Hubbard in 1993~\cite{DH}.  Our proof of the \"Ubertheorem is not only an extension of the Bers proof of the Nielsen--Thurston classification, but it also tracks the Douady--Hubbard paper closely.  One aim of this paper is to clarify the connection between these two proofs, which have long been recognized to be similar in spirit but have not heretofore been put into a single framework.

\bigskip

The Nielsen--Thurston \"Ubertheorem classifies dynamical branched covers, which we presently define.  Let $\Sigma$ be a marked surface, that is, a pair $(S,P)$ where $S$ is a closed surface, and $P$ is a finite set of marked points in $S$.  By a \textit{dynamical branched cover} of $\Sigma$, we mean a branched covering map $f\colon \Sigma \to \Sigma$ where $f(P) \subseteq P$ and $P$ contains all of the critical values of~$f$.  Dynamical branched covers of the sphere with degree at least 2 are traditionally called Thurston maps (according to Douady--Hubbard, this terminology was suggested by Milnor).

A dynamical branched cover can be a homeomorphism, a nontrivial covering map, or a nontrivial branched covering map.  The last two cases only arise when $S$ is $T^2$ or $S^2$, respectively.  A motivation for studying dynamical branched covers is that they make topological operations accessible in the context of rational maps.  For instance the mating of two polynomials of degree $d$ is a dynamical branched cover of $S^2$ (the maps on the hemispheres being given by the two polynomials) with no complex structure attached.

The Nielsen--Thurston \"Ubertheorem classifies dynamical branched covers up to homotopy.  Here, two dynamical branched covers $f$ and $g$ of $\Sigma$ are homotopic if there is a homeomorphism $h$ of $\Sigma$ that is homotopic to the identity (rel $P$) and satisfies $f \circ h = g$ (this relation is finer than the usual notion of Thurston equivalence; see below).  Before stating the \"Ubertheorem, we recall the statements of the Nielsen--Thurston classification and Thurston's characterization of rational maps.

\subsection{The Nielsen--Thurston classification}   The Nielsen--Thurston classification theorem for surface homeomorphisms \cite[Theorem 13.2]{primer} is a theorem of Thurston from 1974, although the first complete,  published proof was given in 1979 by Fathi--Laudenbach--Po\'enaru \cite{flp_original} (many other proofs have appeared since then).  

In the statement we say that a homeomorphism is \emph{periodic} if some nontrivial power is the identity.  Every periodic homeomorphism is geometric in the sense that it is an isometry in some metric of constant curvature.

Next, we say that a homeomorphism is \emph{reducible} if it preserves a multicurve, that is, a collection of pairwise disjoint simple closed curves in $\Sigma$.  

Finally, a surface homeomorphism $f$ of $\Sigma = (S,P)$ is \emph{pseudo-Anosov} if there is a pair of transverse measured foliations $(\F^+,\F^-)$ on $\Sigma$ that is preserved by $f$ and satisfies
\[
f^{-1}(\F^+,\F^-)=(\lambda\, \F^+,\tfrac{1}{\lambda}\,\F^-) 
\]
for some $\lambda > 1$.  The foliations may have 1-pronged singularities and $k$-pronged singularities with $k \geq 3$.  Each 1-pronged singularity must be at a point of~$P$.  As with periodic maps, pseudo-Anosov maps are geometric in that they preserve the affine structure on $\Sigma$ induced by the pair of measured foliations.  

\begin{theorem}[Nielsen--Thurston classification]
Let $f\colon \Sigma \to \Sigma$ be a homeomorphism, where $\Sigma$ is a closed surface with finitely many marked points. Then $f$ is isotopic to a homeomorphism of one of the following types:
\begin{enumerate}
    \item periodic, 
    \item reducible, or
    \item pseudo-Anosov. 
\end{enumerate}
Type (3) is exclusive from the other two.  If $f$ is of type (3) the pseudo-Anosov structure is unique up to isotopy.
\end{theorem}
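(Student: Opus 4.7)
\emph{Plan.} I would follow the Bers approach, studying the action of the mapping class $[f]$ on the Teichm\"uller space $\Teich(\Sigma)$ by pullback of complex structures. Writing $\sigma_f$ for this action, Teichm\"uller's uniqueness theorem implies $\sigma_f$ is an isometry of the Teichm\"uller metric $d_{\T}$. The trichotomy will be read off from the translation length
\[
\tau(f) \;=\; \inf_{X\in\Teich(\Sigma)} d_{\T}\bigl(X,\sigma_f(X)\bigr),
\]
with the key dichotomy being whether this infimum is attained. The low-complexity cases (sphere with at most three marked points, torus with at most one) would be handled directly, since the mapping class groups involved are finite or easily described.

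\emph{When the infimum is attained.} If $\tau(f)=0$ is achieved at some $X$, then $\sigma_f(X)=X$, so $f$ is isotopic to a biholomorphic automorphism of the marked Riemann surface $X$. Since the automorphism group of a hyperbolic surface with finitely many marked points is finite, this yields a periodic representative, giving type~(1). If instead $\tau(f)>0$ is attained, then using Teichm\"uller's existence theorem I would argue that $\sigma_f$ translates an entire Teichm\"uller geodesic by the amount $\tau(f)$. The initial and terminal quadratic differentials of this geodesic produce a transverse pair $(\F^+,\F^-)$ of measured foliations, and the scaling law of the Teichm\"uller geodesic flow forces $f$ to stretch and contract them by a factor $\lambda = e^{\tau(f)}>1$. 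This is the pseudo-Anosov case~(3), and uniqueness of the structure follows from the uniqueness of the translation geodesic (hence of its horizontal/vertical foliations up to scaling).

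\emph{When the infimum is not attained.} Here I would pick a minimizing sequence $X_n\in\Teich(\Sigma)$ and observe that its projection to moduli space cannot stay in a compact set. By Mumford's compactness criterion, there are simple closed curves on $\Sigma$ whose extremal lengths on $X_n$ tend to zero. The plan is then to show that the isotopy classes of these ``collapsing'' curves form a finite, nonempty, $[f]$-invariant multicurve, witnessing reducibility and giving type~(2).

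\emph{Exclusivity and main obstacle.} A pseudo-Anosov map cannot be periodic because $\lambda>1$, and it cannot be reducible because each nonsingular leaf of $\F^{\pm}$ is dense, precluding any invariant isotopy class of simple closed curves. The hard step is the reducible case above: upgrading the mere non-compactness of a minimizing sequence in moduli space into a genuine $[f]$-invariant finite collection of isotopy classes of curves on $\Sigma$. This requires careful Teichm\"uller-distance estimates controlling extremal length (so that becoming short is compatible with $\sigma_f$) together with a pigeonhole argument to extract a finite invariant family, and it is precisely where Bers' original proof concentrates its work.
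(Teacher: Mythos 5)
Your proposal follows exactly the Bers strategy, which is precisely the approach the paper takes for the $\deg f = 1$, non-exceptional case of the \"Ubertheorem: examine the translation length $\tau_\sigma$ of the pullback map on $\Teich(\Sigma)$, and sort into the three cases according to whether $\tau_\sigma$ is zero and realized, positive and realized, or not realized. You have also correctly flagged where the technical weight lies (upgrading escape to infinity in moduli space to an honest invariant multicurve, which the paper handles via the Gr\"otzsch inequality and Lemma~\ref{lem:InvariantMulticurves}), so the plan matches the paper both in architecture and in its identification of the hard step.
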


We can rephrase this classification as: every homeomorphism decomposes along reducing curves into homeomorphisms that are geometric, that is, periodic or pseudo-Anosov.  

Thurston proved the exclusivity by showing that pseudo-Anosov maps increase the length of every simple closed curve exponentially under iteration (see Section~\ref{sec:proof} for more details).  This clearly fails for periodic and reducible maps (in both cases, some power of the map fixes a curve).  So in this sense the Nielsen--Thurston classification says that the only obstructions  to pseudo-Anosovity are the ``obvious'' ones.  

\subsection{Thurston's characterization of rational maps} Our next goal is to state Thurston's characterization of rational maps from the theory complex dynamics.  (Within the field of complex dynamics, this theorem is often referred to as simply ``Thurston's theorem''; we prefer to avoid this terminology due to the ubiquity of Thurston's work in the fields of mapping class groups, complex dynamics, and beyond.)  Our phrasing of the theorem requires the notion of an unmarked map and the notion of a strong reduction system.

\p{Marked and unmarked maps} We say that a dynamical branched cover $f : (S,P) \to (S,P)$ is unmarked if $P$ is the post-critical set for $f$, that is, the set of $f^k(c)$ where $c$ is a critical point for $f$ and $k \geq 1$.  If $P$ strictly contains the post-critical set, then we say that $f$ is marked.  We can define isotopy for dynamical branched covers in the same way that we defined homotopy; these notions are equivalent since homotopic homeomorphisms of a marked closed surface are isotopic.  

\p{Exceptional maps} We now define exceptional maps of the torus and the sphere (exceptional maps of $S^2$ are examples of Latt\`es-type maps; see below).  We focus here on the unmarked exceptional maps, the marked exceptional maps being obtained from the unmarked ones by adding additional marked points (the latter being not post-critical).  While the notion of exceptional maps allows us to give a sharper and more general theorem, the \"Ubertheorem and its proof make sense without the exceptional cases.  

First, an (unmarked) dynamical branched cover of $T^2$ is exceptional if it has degree greater than 1.  All such maps are (unbranched) covering maps.  The exceptional maps of the sphere will be defined in terms of hyperelliptic involutions of $T^2$, which we now discuss.

A \emph{hyperelliptic involution} $\iota : T^2 \to T^2$ is a homeomorphism of order 2 that acts by $-I$ on $H_1(T^2)$.  Every hyperelliptic involution has exactly four fixed points (this follows, for instance, from the Riemann--Hurwitz formula).  One way to obtain a hyperelliptic involution is to choose an affine structure and base point on $T^2$ and take the linear map given by $-I$.   All other hyperelliptic involutions of $T^2$ are topologically conjugate to this one.

Given a hyperelliptic involution $\iota$, we may regard the quotient $T^2/\iota$ as the sphere $S^2$ with a set $P_0$ of four marked points, the images of the fixed points of $\iota$.  Any dynamical branched cover $f: T^2 \to T^2$ that commutes with $\iota$ descends to an unmarked dynamical branched cover $\bar f$ of the quotient $(S^2,P_0)$.  We refer to any such $f$ as \emph{symmetric} (note that $f$ may permute the fixed points of $\iota$).  Any $\bar f$ constructed in this way is what we call an unmarked exceptional dynamical branched cover of $S^2$.  

If we regard $\iota$ as the linear map given by $-I$, then every linear map of $T^2$ is symmetric, and thus descends to an unmarked exceptional dynamical branched cover of $S^2$.  Further, every dynamical branched cover of $T^2$ is homotopic to a linear one, and so every such cover has a corresponding exceptional map of $S^2$.  This correspondence between homotopy classes is not a bijection; for instance the identity map of $T^2$ and translation by $1/2$ in one (or both) factors are homotopic maps of $T^2$, but the corresponding maps of $S^2$ are not homotopic (they act differently on the set of marked points).

\p{Strong reduction systems and stable multicurves} A labeling of a multicurve is a choice of positive real number for each component of the multicurve.  If two components of a multicurve bound an annulus disjoint from $P$, then we may obtain a related multicurve by replacing these components with a single component whose label is the sum of the two labels.  We may also obtain a related multicurve by deleting any inessential components.  We consider labeled multicurves up to the equivalence relation generated by these two relations and homotopy (where homotopies are not allowed to pass through a marked point).  We say that a representative of an equivalence class is \emph{standard} if it has the minimal number of connected components.  

We may say that a labeled multicurve $\Gamma_1$ \emph{contains} a labeled multicurve $\Gamma_2$ if for each component of the standard representative of $\Gamma_2$ there is a component of the standard representative of $\Gamma_1$ that is homotopic and has a label that is at least as large.  

Given a dynamical branched cover $f : \Sigma \to \Sigma$ and a labeled multicurve $\Gamma$ we obtain a labeled multicurve $f^*(\Gamma)$ whose components are the components of $f^{-1}(\Gamma)$ and whose label at a component $\alpha$ is $1/\deg(f|\alpha)$ times the label of $f(\alpha)$.  Finally, we say that a labeled multicurve $\Gamma$ is a strong reduction system for $f$  if the labeled multicurve $f^*(\Gamma)$ contains the labeled multicurve $\Gamma$.  

If $\Gamma$ is an unlabeled multicurve (or the unlabeled multicurve underlying a labeled one) and $f^*(\Gamma)$ contains $\Gamma$ as unlabeled multicurves, then we say that $\Gamma$ is \emph{stable}.  Similarly, if $f^*(\Gamma)$ equals $\Gamma$ as unlabeled multicurves, we say $\Gamma$ is \emph{invariant}.  

\p{Statement of Thurston's characterization of rational maps} We say that a self-map of $S^2$ is \emph{rational} if, under some homeomorphic identification of $S^2$ with $\hat \C$, the map is equal to a rational map.  It is a fact that the rational maps of $\hat \C$ are exactly the holomorphic maps.  

Thurston observed that a strong reduction system is an obstruction to holomorphicity for a non-exceptional dynamical branched cover.  We will return to this point after the statement of Thurston's characterization of rational maps.  Because of Thurston's observation, strong reduction systems for non-exceptional dynamical branched covers are called Thurston obstructions in the literature.  Since strong reduction systems are not obstructions to holomorphicity in the exceptional cases, we avoid this terminology.

\begin{theorem}[Thurston's characterization of rational maps]
\label{thm:thu}
Let $f\colon \Sigma \to \Sigma$ be an unmarked dynamical branched cover where $\Sigma = (S^2,P)$. If $f$ is not exceptional, then $f$ is isotopic to a dynamical branched cover of one of the following two types:
\begin{enumerate}
    \item rational, or
    \item strongly reducible. 
\end{enumerate}
The two types are exclusive.  If $f$ is of type (1), the complex structure is unique up to isotopy.
\end{theorem}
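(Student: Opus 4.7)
The plan is to follow the Douady--Hubbard strategy, which extends the Bers Teichm\"uller-theoretic proof of the Nielsen--Thurston classification to the branched-cover setting. The dynamical branched cover $f$ induces a pullback operator $\sigma_f \colon \Teich(\Sigma) \to \Teich(\Sigma)$ sending the class of a marked complex structure $X$ to the class of $f^{*}X$; because the critical values of $f$ lie in $P$, this pullback is an honest marked complex structure on $\Sigma$. A fixed point of $\sigma_f$ is precisely a complex structure in which $f$ is holomorphic, that is, a rational representative of the isotopy class of $f$. The proof is organized around the orbit $\tau_n = \sigma_f^{n}(\tau_0)$ for an arbitrary basepoint $\tau_0$.

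The analytic engine is a Royden--Gardiner type estimate showing that $\sigma_f$ is weakly contracting in the Teichm\"uller metric, with strict contraction outside the exceptional cases. I would then dichotomize on whether the projected orbit in moduli space $\Teich(\Sigma)/\PMod(\Sigma)$ is relatively compact. If it is, the iterates converge (via a contraction-plus-compactness argument on the projection) to a fixed point of $\sigma_f$, producing the rational map in case~(1).

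If the projected orbit escapes every compact subset of moduli space, then by Mumford compactness there exist simple closed curves whose hyperbolic lengths $\ell_{\tau_n}$ tend to zero. The task is to assemble these short curves into a multicurve $\Gamma$ and to assign each component a label so that $\Gamma$ becomes a strong reduction system, yielding case~(2). The labels should come from the annular moduli of the thin parts of $\tau_n$, and the containment $f^{*}(\Gamma) \supseteq \Gamma$ should be forced by the relation $\ell_{\tau_{n+1}}(\alpha) = \ell_{\tau_n}(f(\alpha))/\deg(f|_\alpha)$ for each component $\alpha$ of $f^{-1}(\Gamma)$. This is the main obstacle I anticipate: one must stabilize the set of short curves under $f^{-1}$ to produce $\Gamma$, and extract the numerical labeling from the asymptotic ratios of lengths, rather than settling for the weaker conclusion that the underlying unlabeled multicurve is merely stable.

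Finally, exclusivity and uniqueness both reduce to contraction. If a rational $f$ admitted a strong reduction system $\Gamma$, iterating the associated Thurston linear map on $\R^{\Gamma}$ would force its leading eigenvalue to be at least $1$, contradicting strict contraction of $\sigma_f$ at its fixed point; the non-exceptional hypothesis enters exactly here, since the exceptional maps are those for which $\sigma_f$ degenerates along torus-like directions. Uniqueness of the complex structure in case~(1) follows by the same principle: two distinct fixed points of $\sigma_f$ would violate strict contraction along the Teichm\"uller geodesic joining them.
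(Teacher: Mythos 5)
Your proposal captures the overall Bers--Douady--Hubbard architecture correctly (pullback map $\sigma_f$, non-expansion/weak contraction, Mumford compactness, dichotomy), and you honestly flag the hardest point. But that flagged point is a genuine gap, and the paper resolves it by a different route than the one you sketch. You propose to \emph{directly construct} the labeled multicurve $\Gamma$ by extracting labels from the asymptotic ratios of hyperbolic lengths (or annular moduli) along the escaping orbit; this is substantially harder than what is needed and you offer no mechanism for it. The paper instead argues \emph{by contradiction}: once Mumford compactness produces a curve $\gamma$ of large modulus at some $Y_n$, Lemma~\ref{lem:InvariantMulticurves} builds from it an unlabeled $f$-stable multicurve $\Gamma$; then one \emph{assumes} $\Gamma$ is not the support of any strong reduction system, i.e.\ that its transition matrix $M$ has spectral radius $\rho(M)<1$, and derives a contradiction. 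The contradiction uses a uniform exponent $p$ with $\|M^p\|<1/2$ (Lemma~\ref{lem:m}, finitely many possible $M$ for fixed $\Sigma$ and $d$) together with an affine moduli inequality $\mu_{\sigma_f(X)}(\Gamma) \leq M\,\mu_X(\Gamma)+(b,\dots,b)$ (Lemma~\ref{lemma:fudge}); iterating $p$ steps shows $\mu_{Y_n}(\gamma)$ cannot exceed the chosen threshold $C$. This converts the problem from extracting eigen-data out of asymptotics into a clean finite-norm estimate, and it is where the real work of the proof lives. Your proposal does not supply an equivalent.

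Two further mismatches worth noting. First, you assert ``strict contraction outside the exceptional cases,'' but what Proposition~\ref{prop:pullback} provides is only that some iterate $\sigma_f^k$ is weakly contracting (no uniform modulus of contraction), and even that is invoked via the synthetic metric-space lemma (Proposition~\ref{prop:metricorbit}) rather than a fixed-point iteration; a naive ``contraction plus compactness implies convergence to a fixed point'' argument doesn't go through without a properly-discontinuous group action to relate the orbit back to a single point, which is exactly what Propositions~\ref{prop:metricmumford} and~\ref{prop:metricorbit} are for. Second, your exclusivity argument (spectral radius $\geq 1$ on $\R^\Gamma$ contradicts contraction at the fixed point) is the classical Douady--Hubbard derivative argument; the paper deliberately sidesteps it, instead proving in Appendix~\ref{sec:orb} that a strong reduction system forces the orbifold to be Euclidean (via the Jenkins extremal problem and lifting to the universal cover), with no reference to the derivative of $\sigma_f$. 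That difference is a design choice rather than an error, but you should be aware that the spectral-vs-contraction link requires the Beltrami/quadratic-differential duality that the paper explicitly avoids.
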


Our statement of Thurston's characterization is different from, but equivalent to, the usual statement.  One difference is that our statement involves a stable multicurve instead of an invariant multicurve.  So in terms of finding an obstruction to rationality, our statement is stronger.  Another difference is that our statement makes no reference to a matrix or an eigenvalue (the labels on the strong reduction system play the role of the eigenvector).  

Pilgrim \cite{pilgrim03} showed that we can use Thurston's characterization of rational maps to say that every unmarked dynamical branched cover of $(S^2,P)$ reduces into pieces that are geometric, that is, rational.  This is analogous to the story for surface homeomorphisms, as above.

The uniqueness statement in Theorem~\ref{thm:thu} is often referred to as Thurston rigidity.  Hence the common parlance: Thurston's theorem states that a Thurston map has a Thurston obstruction---meaning that the Thurston matrix has a Thurston eigenvalue greater than or equal to 1---or it is Thurston equivalent to a rational map, which moreover satisfies Thurston rigidity.

\p{Topological polynomials, Levy cycles, and Levy--Berstein} We say that a dynamical branched cover $f : (S^2,P) \to (S^2,P)$ is a topological polynomial if $P$ contains a fixed point $p$ for $f$ and the local degree of $f$ at $p$ is equal to $\deg f$.  We may regard the topological polynomial $f$ as a dynamical branched cover of $(\R^2,P \setminus p)$ (so $p$ plays the role that $\infty$ plays for a polynomial).  Examples of topological polynomials include polynomials acting on $\hat \C$ (equivalently, acting on~$\C$).  

A multicurve $\{ \gamma_1, \dots, \gamma_k \}$ for a dynamical branched cover $f$ is a Levy cycle if there is a cyclic permutation $\sigma$ of $\{1,\dots,k\}$ so that for every $i$ there is a component $\tilde \gamma_i$ of $f^{-1}(\gamma_i)$ that is homotopic to $\gamma_{\sigma(i)}$ and that maps with degree 1 onto $\gamma_i$.  A Levy cycle is degenerate if each $\gamma_i$ bounds an embedded disk $\Delta_i$ so that for every $i$ some component of $f^{-1}(\Delta_i)$ that is homotopic to $\Delta_{\sigma(i)}$ and maps with degree 1 onto $\Delta_i$.

By work of Berstein, Hubbard, Levy, Rees, Tan, and Shishikura \cite[Theorem 10.3.7]{hubbard} we have the following refinement of Thurston's characterization of rational maps: \emph{a topological polynomial is either rational or it has a degenerate Levy cycle}.  In Appendix~\ref{sec:levy} we state and prove a strengthening of Levy's theorem, Proposition~\ref{prop:levy}.

Levy cycles are strong reduction systems.  However, they are not always Thurston obstructions since they are not always invariant multicurves.  It is a feature of our statement of Thurston's characterization of rational maps that Levy cycles suffice to obstruct rationality.  

Levy and Berstein give a sufficient criterion for a topological polynomial to be rational: each point of $P$ contains a critical point in its forward orbit.  This result is known as the Levy--Berstein theorem.  In Appendix~\ref{sec:levy} we explain how to derive this statement from Proposition~\ref{prop:levy}.  

\p{Portraits, homotopy, and Thurston equivalence} Above, we defined two dynamical branched covers $f$ and $g$ of $\Sigma$ to be homotopic if there is a homeomorphism $h$ of $\Sigma$ that is homotopic to the identity (rel $P$) and satisfies $f \circ h = g$.  We give here an alternate description of homotopic maps and also compare the notion of homotopy to the more commonly used notion of Thurston equivalence.  For the former we require the notion of an extended portrait.  

The portrait of a dynamical branched cover $f$ is the directed, labeled graph whose vertices are the post-critical points of $f$ and where there is an edge labeled $k$ from $p_1$ to $p_2$ if $f$ maps $p_1$ to $p_2$ with local degree $k$.  The extended portrait of $f$ is defined in the same way, except that the vertex set consists of the critical points and the post-critical points of $f$.

We may say that two dynamical branched covers of $\Sigma$ are homotopic if they are connected by a homotopy of maps $f_t : \Sigma \to \Sigma$ rel $P$ where each $f_t$ is a dynamical branched cover and all of the $f_t$ have the same extended portraits up to labeled, directed graph isomorphism.  This notion agrees with the notion of homotopy given earlier.  

Let $\Sigma = (S,P)$ and $T = (S,Q)$ be two marked surfaces.  In the literature, dynamical branched covers $f : \Sigma \to \Sigma$ and $g : T \to T$ are said to be Thurston equivalent (or combinatorially equivalent) if there are homeomorphisms $h_0,h_1 : \Sigma \to T$ that are homotopic (rel $P$) and satisfy $f \circ h_0 = h_1 \circ g$.  If, for example, $f$ and $g$ are polynomials with different post-critical sets, then it does not make sense for $f$ and $g$ to be homotopic, but it does make sense for them to be Thurston equivalent.  Because of this, Thurston's characterization of rational maps is usually stated in terms of Thurston equivalence.  We will not discuss Thurston equivalence in what follows.

\p{Orbifolds and Thurston obstructions} Let $\hat{\mathbb{N}}$ denote $\mathbb{N} \cup \{\infty\}$.  For our purposes, a (2-dimensional) orbifold is a marked surface $(S,P)$ endowed with a function $\nu : P \to \hat{\mathbb{N}}$.  We think of the function $\nu$ as a labeling of the points of $P$ by elements of $\hat{\mathbb{N}}$.

To a dynamical branched cover $f : (S,P) \to (S,P)$ there is an associated orbifold structure on $(S,P)$---that is, an associated function $\nu$---defined as follows.  For each $k$ and each critical point $c$ of $f^k$ with $f^k(c)=p$, we compute the local degree of $f^k$ at $c$.  The label $\nu_p$ is the least common multiple of these local degrees over all such choices of $k$ and $c$ (we take the least common multiple of the empty set to be 1, so the label on a non-postcritical point is 1).  We provide geometric meaning to this notion in Appendix~\ref{sec:orb}.  Briefly, the orbifold for $f$ is the minimal orbifold structure for which $f$ is a partial self-cover (in the orbifold sense).  Every orbifold falls into one of three categories---spherical, Euclidean, or hyperbolic---according to whether its Euler characteristic is positive, zero, or negative; see the appendix.  

Thurston's characterization of rational maps can equivalently be stated in terms of orbifolds instead of exceptional maps.  There is a particular orbifold $(S^2,P)$, called the $(2,2,2,2)$-orbifold, where $|P|=4$ and $\nu(p)$ is equal to 2 for all $p \in P$.  In the appendix, we show that a dynamical branched cover $f$ of $(S^2,P)$ is exceptional if and only if the orbifold for $f$ is the $(2,2,2,2)$-orbifold.  As such, we obtain an alternate statement of Thurston's characterization, namely, that if the orbifold for a dynamical branched cover $f$ of $(S^2,P)$ is not the $(2,2,2,2)$-orbifold, then (up to homotopy) $f$ is either holomorphic or it has a strong reduction system. 

With this in mind, we may think of Thurston's characterization of rational maps as a statement about maps with hyperbolic orbifold, as opposed to a statement about non-exceptional maps.  Indeed, a slight weakening of Theorem~\ref{thm:thu} is that if $f$ has hyperbolic orbifold, then $f$ is rational if and only if it is not strongly reducible (the only weakening is that this version leaves out non-exceptional Euclidean maps).  The $(2,2,2,2)$-orbifold is the only Euclidean orbifold with four cone points.  Since there are no essential curves on an orbifold with three marked points, there are no strong reduction systems and so by Thurston's characterization all such dynamical branched covers are rational.  To summarize, the reasons why Thurston's dichotomy holds for maps with hyperbolic orbifold and non-exceptional maps with Euclidean orbifold are different: in the former case strong reduction systems are obstructions to holomorphicity, and in the latter case there are no strong reduction systems. 

In the Appendix~\ref{sec:orb}, we use orbifolds to explain why strong reduction systems are obstructions to holomorphicity for maps with hyperbolic orbifold.  Unlike previous proofs in the literature, our argument makes no reference to Teichm\"uller space or the pullback map.  Instead, it relies on the geometric characterization of the orbifold for a dynamical branched cover that seems to not appear in the literature but was surely known to Thurston.  As with the Nielsen--Thurston classification, we can therefore think of Thurston's characterization of rational maps as saying that the only obstruction to holomorphicity is the ``obvious'' one.

\subsection{The Nielsen--Thurston \"Ubertheorem}\label{subsec:uber}

Before stating the \"Ubertheorem,  we introduce affine exceptional maps, which will appear in the statement.  We think of these as being geometric representatives of homotopy classes of maps, in the same way that pseudo-Anosov and holomorphic maps are.  

\p{Affine exceptional maps} An unmarked affine exceptional map of $T^2$ is simply that: 
an exceptional map of $T^2$ (in other words, a map of degree greater than 1) that is unmarked and preserves  some affine structure on $T^2$.  Again, all unmarked exceptional maps of $T^2$ are homotopic to affine exceptional maps.  To translate this notion to the sphere case, we again need to go through the hyperelliptic involution.

Fix an affine structure on $T^2$ and choose a base point.  As above, there is an associated hyperelliptic involution $\iota$, one of whose fixed points is the base point.  All linear maps of $T^2$ are symmetric with respect to $\iota$ and hence descend to unmarked dynamical branched covers of $(S^2,P_0)$, the sphere with four marked points.  These are examples of unmarked affine exceptional maps of $(S^2,P_0)$ (there are four marked points but, as per the definition of an unmarked map, they are all post-critical).

More generally, if we take a linear map of $T^2$ and compose it with a rotation of $T^2$ by $\pi$ in either or both factors, we obtain an affine map of $T^2$ that descends to a map of $(S^2,P_0)$.  Any such map is an unmarked affine exceptional map of $S^2$.  While $S^2$ carries no affine structure, it does carry many singular affine structures: those arising from affine structures on $T^2$.  Affine maps of $S^2$ preserve these singular affine structures.  

A dynamical branched cover of a torus or a sphere with four marked points is an unmarked affine exceptional map if it is affine with respect to some choice of (singular) affine structure.  

A marked exceptional dynamical branched cover is affine if the corresponding unmarked map (obtained by forgetting the extra marked points) is affine.  In the case of the torus this means forgetting all the marked points, and in the case of the sphere this means forgetting all but four (all of which being post-critical).  We emphasize that a marked map is affine if the corresponding unmarked map is actually an affine map, not just homotopic to an affine map.

\p{Statement of the \"Ubertheorem} After stating two definitions, we will give the statement of the \"Ubertheorem and explain how to derive the previous two theorems as special cases.

A dynamical branched cover $f : \Sigma \to \Sigma$ of degree $d$ is \emph{holomorphic} if it is holomorphic with respect to some complex structure on $\Sigma$.  And $f$ is \emph{pseudo-Anosov} if there is a pair of transverse measured foliations $(\F^+,\F^-)$ on $\Sigma$ that is preserved by $f$ and satisfies
\[
f^{-1}(\F^+,\F^-)=(\lambda\sqrt{d}\, \F^+,\tfrac{\sqrt{d}}{\lambda}\,\F^-)
\]
for some $\lambda > 1$.  The singularities have the same restrictions as in the case of a pseudo-Anosov homeomorphism.  

\begin{ubertheorem}
\label{thm:main}
Let $f\colon \Sigma \to \Sigma$ be a dynamical branched cover. Then $f$ is isotopic to a map $\phi$ of one of the following types:
\begin{enumerate}
    \item holomorphic,
    \item strongly reducible, or 
    \item pseudo-Anosov.
\end{enumerate}
If $f$ is of type (1) and of type (2), then either $\deg f = 1$ or $f$ is affine exceptional.  If $f$ is of type (2) and of type (3) then $f$ is affine exceptional.  If $f$ is of type (3) then either $\deg f = 1$ or $f$ is affine exceptional. 

If $f$ is of type (1) and $f$ is a non-exceptional map with $\deg f > 1$, then the associated complex structure is unique up to isotopy.  If $f$ of type (3) then the associated pair of measured foliations is unique up to isotopy.  
\end{ubertheorem}

As mentioned, the \"Ubertheorem has the Nielsen--Thurston classification and Thurston's characterization of rational maps as special cases.  To see that the Nielsen--Thurston classification is the $\deg f=1$ case, we must use the following three facts about homeomorphisms of surfaces: (1) a holomorphic homeomorphism of a surface of negative Euler characteristic has finite order (and a holomorphic homeomorphism of the torus is homotopic to a map of finite order), (2) a strong reduction system is nothing other than a reduction system, and (3) a pseudo-Anosov dynamical branched cover of degree 1 is a pseudo-Anosov homeomorphism.

To obtain Thurston's characterization of rational maps from the \"Ubertheorem, we use the fact that holomorphic maps of $S^2$ are rational.  Since we do not require the marked points of $\Sigma$ to be post-critical, the \"Ubertheorem also implies the generalization of Thurston's characterization due to Buff--Cui--Tan, which extends the theorem to the case of marked dynamical branched covers \cite[Theorem 2.1]{BCT}.  

While we are not aware of any theorems in the literature that combine the exceptional cases of Thurston's characterization of rational maps into the classical statement, a result of Bartholdi--Dudko does give an analogue of the \"Ubertheorem for the exceptional cases themselves \cite[Theorem A]{BD}.

\p{Extensions of the \"Ubertheorem: non-orientable surfaces, orientation reversing maps, equivariant maps} In Appendix~\ref{sec:ext}, we explain how our argument for the Nielsen--Thurston \"Ubertheorem applies in even further generality.  Specifically, we give extensions to the cases of non-orientable surfaces and the cases of orientation-reversing dynamical branched covers.  We also give a version of the \"Ubertheorem for equivariant dynamical branched covers.

\p{The Bers strategy} As in the Bers proof of the Nielsen--Thurston classification, we prove the \"Ubertheorem by appealing to the action of a dynamical branched cover $f : \Sigma \to \Sigma$ on the Teichm\"uller space $\Teich(\Sigma)$.  A point in $\Teich(\Sigma)$ is an equivalence class of complex structures on $\Sigma$.  By pulling back complex structures through $f$, we obtain Thurston's pullback map $\sigma_f : \Teich(\Sigma) \to \Teich(\Sigma)$.  (In the original Bers proof, it makes sense to consider either pullback or push-forward, but for covers of higher degree only pullback makes sense in general.)

Following Bers, we consider the translation length $\tau$ of $\sigma_f$, that is, the infimum of the distances $d(X,\sigma_f(X))$ over all $X$ in $\Teich(\Sigma)$.  There are three cases for $\tau$: it can be 0 and realized, not realized, or nonzero and realized.  In the first case, $\sigma_f$ has a fixed point, which means that $f$ is holomorphic.  In the second case, we show that $f$ has a reduction system.  As in the original Bers proof, this is derived as a consequence of the Mumford compactness criterion.  When $\deg f > 1$ we augment the original Bers proof to show that there is an orbit for $\sigma_f$ that goes to infinity (towards the reduction system); this is the content of Proposition~\ref{prop:metricorbit}.  Then, assuming the reduction system is not strong, we show that this orbit is also repelled from infinity, a contradiction.  Finally, in the third case, we show that $\sigma_f$ preserves a geodesic ray in $\Teich(\Sigma)$.  This phenonenon, which does not seem to have been observed before for $\deg f > 1$, is elucidated in Proposition~\ref{prop:metricray}.  We show that this only occurs in the exceptional cases and the cases where $\deg f = 1$.    Perhaps unexpectedly, the usual discussion for the $\deg f =1$ applies in this more general case.  As in the original Bers proof, we then show that a geodesic ray corresponds to a pair of transverse measured foliations, and the translation distance along the ray corresponds to a stretch factor $\lambda$, thus implying that $f$ is pseudo-Anosov.

\p{Examples of non-exclusivity} Figure~\ref{fig:venn} gives examples of dynamical branched covers of $T^2$ of all the different types allowed by the \"Ubertheorem when the cover is exceptional and the degree is greater than 1: holomorphic, holomorphic and strongly reducible, strongly reducible, strongly reducible and Anosov, and Anosov.  (Here we say ``Anosov'' instead of ``pseudo-Anosov'' since the underlying surface is a torus, and hence the corresponding foliations have no singularities.)  For the first three examples, we require that $d$ be a perfect square. As demanded by the \"Ubertheorem, the strongly reducible and Anosov example fails to be Anosov when $d=1$.

\begin{center}
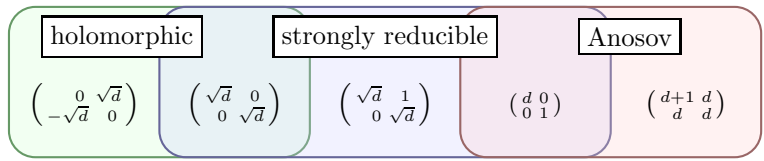
\begin{figure}[h!]
\begin{tikzpicture}
    \draw[green!20!gray,fill=green!20,fill opacity =.25, rounded corners=10,thick]
     (-4,0) rectangle (0,2);
\draw[blue!20!gray,fill=blue!20,fill opacity=.25,rounded corners=10,thick]
     (-2,0) rectangle (4,2);
     \draw[red!20!gray,fill=red!20,rounded corners=10,fill opacity=.25, thick]
     (2,0) rectangle (6,2);
\node at (-2.5,1.6) {\fcolorbox{black}{white}{\small holomorphic}};
\node at (1,1.6) {\fcolorbox{black}{white}{\small strongly reducible}};
\node at (4.25,1.6) {\fcolorbox{black}{white}{\small Anosov}};
\node at (-3,.7) {\Small $\left(\begin{smallmatrix}\ \  \  0&\sqrt{d} \\-\sqrt{d} & \ 0 \end{smallmatrix}\right)$};
\node at (-1,.7) {\Small $\left(\begin{smallmatrix} \sqrt{d} & \ 0 \\ \ 0 & \sqrt{d} \end{smallmatrix}\right)$};
\node at (1,.7) {\Small $\left(\begin{smallmatrix}\sqrt{d} & \ 1\\ \ \ 0 &\sqrt{d} \end{smallmatrix}\right)$};
\node at (3,.7) {\Small $\left(\begin{smallmatrix} d & 0 \\ 0 & 1 \end{smallmatrix}\right)$};
\node at (5,.7) {\Small $\left(\begin{smallmatrix}d+1 & d \\ d & d\end{smallmatrix}\right)$};
\end{tikzpicture}
\caption{A Venn diagram of different types of dynamical branched covers of~$T^2$}
\label{fig:venn}
\end{figure}
\end{center}

\p{Comparison to Douady--Hubbard}  The original proof of Thurston's characterization of rational maps is detailed in Douady--Hubbard's paper \cite{DH} and Hubbard's book \cite{hubbard}.  Our approach is the same in spirit, but differs in the following ways:
\begin{enumerate}
\item we appeal to Teichm\"uller's theorems instead of working with the derivative of the pullback map (our application of Teichm\"uller's uniqueness theorem is morally equivalent to Lemma~1 of Douady--Hubbard), 
\item we avoid explicit mention of hyperbolic surfaces, staying entirely in the category of Riemann surfaces,
\item we give a simplified treatment of the combinatorial topological step (Proposition~\ref{prop:stable}) and, like Buff--Cui--Tan, we directly address the case where there are marked points that are not post-critical (the cost of our simplification is the loss of sharpness), 
\item we isolate in Section~\ref{sec:synthetic} the basic properties of metric spaces we use, 
and
\item we clarify the role that orbifolds play in the proof that strong reduction systems are obstructions to holomorphicity in the case of a non-exceptional map.
\end{enumerate}
Another feature of our exposition is that we treat many cases of Thurston's characterization of rational maps that were not addressed before, namely, the cases where $S=T^2$, where $S$ is non-orientable, where $f$ reverses orientation, and where $f$ is equivariant with respect to a finite group action.  The arguments of Douady--Hubbard could similarly be extended to prove these additional cases.

One other philosophical difference between our approach and the prevailing literature is that we make no mention of Thurston equivalence.  To wit, instead of considering maps up to homotopy and conjugacy, we only consider maps up to homotopy.  This point of view has long been championed by Kevin Pilgrim.  

We emphasize that there is a general translation between the Douady--Hubbard proof and our proof; and in the text that follows we have indicated the points of similarity.  We hope that our exposition will appeal to those already familiar with the Bers proof of the Nielsen--Thurston classification theorem, and will also clarify the relationship between that theorem and Thurston's characterization of rational maps.

Work in progress by Drach--Reinke--Schleicher \cite{DS} gives a new approach to the four theorems of Thurston involving the pullback map (two of which are the ones discussed in this paper).  Their approach also uses Teichm\"uller's theorems instead of the derivative of the pullback map.

\p{Latt\`es maps and Euclidean maps} The exceptional maps that we consider overlap with several other notions in the literature, and the terminology is used differently by different authors.  A Latt\`es map is a holomorphic branched cover $S^2\rightarrow S^2$ that is the finite quotient of a holomorphic affine map of $T^2$.  Milnor gives a thorough survey and further characterization of Latt\`es maps \cite{milnor}.  A Latt\`es-type map is a (not-necessarily-holomorphic) quotient of an affine map of $T^2$ (this is not typically given as the definition of Latt\`es-type, but Bonk--Meyer prove that it is equivalent \cite[Theorem 1.2]{BM}). The exceptional maps we consider are Latt\`es-type maps where the finite quotient is by the hyperelliptic involution.  (Milnor also defines finite quotients of affine maps, which have a similar definition as a Latt\`es map, except with the torus possibly replaced by a cylinder; these types of maps do not arise in this paper.)

Cannon--Floyd--Parry--Pilgrim consider Euclidean maps, which they define as dynamical branched covers of $S^2$ with at most four post-critical points, none of which are critical, such that every critical point is simple (local degree two) \cite{CFPP}.  These are precisely the dynamical branched covers with Euclidean orbifold and at least four (hence exactly four) post-critical points.  Our exceptional maps of $S^2$ are the Euclidean maps of Cannon--Floyd--Parry--Pilgrim.  Cannon--Floyd--Parry--Pilgrim also introduce and study nearly Euclidean maps, which are branched covers of $S^2$ with exactly four post-critical points and where each critical point is simple (such as the rabbit polynomial).

\subsection{Overview of the paper} We divide the proof of the \"Ubertheorem into five parts, each with their own section.  The first three of these sections isolate three different aspects of the proof, namely, combinatorial topology, Teichm\"uller theory, and metric space theory.  Sections~\ref{sec:proof} and~\ref{sec:torus} tie these together to prove the theorem for the non-exceptional and exceptional cases, respectively.  While the exceptional cases are handled separately, we emphasize that the proof is essentially the same; the main content is already contained in the non-exceptional case, while the exceptional case requires a few extra technical details.

In Section~\ref{sec:rh}, we give a combinatorial topological statement, Proposition~\ref{prop:stable}.  It says that, under certain hypotheses on $f$, at most 3 marked points have the property that all of their iterated preimages under $f$ are critical or marked.

In Section~\ref{sec:pullback} we prove Proposition~\ref{prop:pullback}, which is about the pullback map on Teichm\"uller space~$\sigma_f$.  The proposition states that if $\deg f > 1$ and if $f$ is not exceptional, then some iterate of $\sigma_f$ is weakly contracting, meaning that it decreases the distance between all pairs of points.  The proof uses  Proposition~\ref{prop:stable}.

In Section~\ref{sec:synthetic} we prove three statements about metric spaces, namely, Propositions~\ref{prop:metricmumford}, \ref{prop:metricorbit}, and~\ref{prop:metricray}.  The purpose is to isolate the parts of the proof of the \"Ubertheorem that only use the theory of metric spaces and not the theory of Teichm\"uller space.

In Section~\ref{sec:proof} we follow the Bers proof of the Nielsen--Thurston classification in order to prove the \"Ubertheorem in the non-exceptional cases.  Our argument follows the Bers strategy described above.  Again, the key idea is to consider the translation length $\tau$ of the pullback map on Teichm\"uller space and separately investigate the three cases where $\tau$ is 0 and realized, nonzero and realized, and not realized.  These three cases exactly correspond to the three cases in the conclusion of the \"Ubertheorem.

Finally in Section~\ref{sec:torus}, we prove the \"Ubertheorem in the exceptional cases.  We prove that in these cases the associated Teichm\"uller space decomposes as a product in a natural way, and apply the ideas of Section~\ref{sec:proof} to the action of a dynamical branched cover on the product structure.  Among maps of degree greater than 1, the exceptional $f$ that preserve a horizontal slice are exactly the ones whose associated pullback maps fail to have weakly contracting orbits.  This is the reason why exceptional maps require separate consideration.  

There are three appendices.  Appendix~\ref{sec:orb} gives a direct proof of the fact that strong reduction systems are obstructions to holomorphicity for dynamical branched covers with hyperbolic orbifold.  Along the way, we clarify the geometric meaning of the orbifold structure for a dynamical branched cover.  In Appendix~\ref{sec:levy} we explain how Thurston's characterization of rational maps specializes in the case of topological polynomials.  Appendix~\ref{sec:ext} describes how our arguments apply to give generalizations of the \"Ubertheorem to the cases of equivariant dynamical branched covers, dynamical branched covers of non-orientable surfaces, and orientation-reversing dynamical branched covers.

\p{Acknowledgments} We would like to thank Wolf Jung, Jeremy Kahn, Sanghoon Kwak, Yair Minsky, Insung Park, Kevin Pilgrim, Dierk Schleicher, Roberta Shapiro, and Sam Taylor for helpful comments and conversations.  The second author is grateful to the Georgia Institute of Technology for supporting this work.  The third author is grateful to the Mathematical Sciences Research Institute for a stimulating work environment.


\section{Stable marked points}
\label{sec:rh}

The goal of this section is to prove Proposition~\ref{prop:stable} below.  This is the main ingredient in the proof of Proposition~\ref{prop:pullback} in Section~\ref{sec:pullback}.   A refined version of this proposition is given by Lemma~2 of Douady--Hubbard.  To state our proposition, we require the notion of stability.

\p{Stability of marked points} Let $\Sigma = (S,P)$ and let $f : \Sigma \to \Sigma$ be a dynamical branched cover.  We say that $p \in P$ is stable if $f^{-1}(p) \subseteq P \cup \Crit(f)$.  We say that $p$ is infinitely stable if $f^{-k}(p) \subseteq P \cup \Crit(f^k)$ for all $k \geq 0$.

If $f$ is exceptional, then each post-critical point is infinitely stable. The following proposition is a sort of converse to this statement.

\begin{proposition}
\label{prop:stable}
Let $\Sigma = (S^2,P)$, and let $f : \Sigma \to \Sigma$ be a dynamical branched cover of degree $d > 1$.  If $f$ is not exceptional, then $f$ has fewer than 4 infinitely stable marked points.
\end{proposition}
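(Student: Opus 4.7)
My approach is to prove the contrapositive via the orbifold $\nu = \nu_f$ associated to $f$ (from the introduction), using the orbifold Riemann--Hurwitz formula as the main tool. By construction, $\nu$ is the minimal labeling of $P$ making $f$ a self orbifold cover of $(S^2,P,\nu)$, so orbifold Riemann--Hurwitz applied to this self-cover gives $\chi_{\mathrm{orb}}(S^2,P,\nu) = d\cdot\chi_{\mathrm{orb}}(S^2,P,\nu)$, forcing $\chi_{\mathrm{orb}} = 0$ since $d > 1$; equivalently,
\[
\sum_{p\in P}\left(1-\frac{1}{\nu_p}\right) = 2.
\]
Next I would show that every infinitely stable $q\in P$ has $\nu_q\geq 2$: its preimages $f^{-k}(q)\subseteq P\cup\Crit(f^k)$ have total multiplicity $d^k$, so since $|P|$ is fixed, for large $k$ some preimage $c$ must be critical for $f^k$ with local degree at least $2$, and $\nu_q$ is by definition the least common multiple of these local degrees.

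Now suppose for contradiction that $f$ has at least four infinitely stable marked points $q_1,\dots,q_4$. Each contributes $(1-1/\nu_{q_i})\geq 1/2$ to the orbifold Euler sum, which already equals $2$; the inequality therefore saturates, giving $\nu_{q_i}=2$ for every $i$ and $\nu_p=1$ for all other $p\in P$. Since $\nu_p\geq 2$ holds exactly when $p$ is post-critical, the post-critical set of $f$ is precisely $\{q_1,\dots,q_4\}$, and the unmarked version of $f$ has orbifold exactly the $(2,2,2,2)$-orbifold. The equivalence shown in Appendix~\ref{sec:orb} between having orbifold $(2,2,2,2)$ and being exceptional then shows the unmarked---and hence the original marked---$f$ is exceptional, contradicting the hypothesis.

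I expect the main obstacle to be the reliance on two orbifold inputs: orbifold Riemann--Hurwitz for the self-cover $f$, and the Appendix's identification of maps with orbifold $(2,2,2,2)$ as exactly the exceptional maps. The former is essentially routine given the minimality in the definition of $\nu_f$; the latter is the deeper geometric content, encoding that the $(2,2,2,2)$-orbifold is the quotient of $T^2$ by the hyperelliptic involution so that orbifold self-covers of $(S^2,P,(2,2,2,2))$ lift through this involution. If one wished to avoid the Appendix at this point, one would need to carry out the lift of $f$ to a commuting self-cover of $T^2$ directly.
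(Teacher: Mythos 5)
Your argument breaks at the orbifold Riemann--Hurwitz step. The orbifold $\nu_f$ makes $f$ a \emph{partial} self-orbifold cover of $(S^2,P,\nu_f)$, not an honest self-cover: the domain of the covering is an orbifold $(S',P',\nu')$ with $(S',P',\nu')\sqsubseteq (S^2,P,\nu_f)$, which in general is a proper subsurface with possibly larger labels. Riemann--Hurwitz therefore gives $\chi^{\mathrm{orb}}(S',P',\nu') = d\cdot\chi^{\mathrm{orb}}(S^2,P,\nu_f)$, and since $\chi^{\mathrm{orb}}(S',P',\nu')\leq\chi^{\mathrm{orb}}(S^2,P,\nu_f)$ the only consequence is $\chi^{\mathrm{orb}}\leq 0$, not equality. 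Dynamical branched covers with hyperbolic orbifold are plentiful: $z\mapsto z^2-1$ has orbifold $(\infty,\infty,\infty)$ with $\chi^{\mathrm{orb}}=-1$. Once ``$=0$'' is weakened to ``$\leq 0$'', your saturation argument collapses: four infinitely stable points, each contributing at least $1/2$, give only $\sum_p(1-1/\nu_p)\geq 2$, which is exactly the bound already guaranteed, so you cannot deduce $\nu_{q_i}=2$ for each $i$, nor that every other $p$ has $\nu_p=1$, nor that the orbifold is $(2,2,2,2)$.

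The paper avoids orbifolds entirely and counts preimages directly. Writing $Q$ for the set of infinitely stable points, $\tilde Q = f^{-1}(Q)$, and $C = \Crit(f)\cap\tilde Q$, it combines the inclusion $\tilde Q\subseteq Q\cup C$, the multiplicity identity $|\tilde Q| = |Q|d - \sum_{c\in C}(\deg_f(c)-1)$, and the ordinary Riemann--Hurwitz bound $\sum_{c\in C}(\deg_f(c)-1)\leq 2d-2$ to obtain $|C|\geq (|Q|-2)(d-1)$, hence $|Q|\leq 4$; the equality case ($|Q|=4$) then forces every critical point to be simple, $C$ to be disjoint from $Q$, and $f(Q)\subseteq Q$, which yields the $(2,2,2,2)$-orbifold. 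To rescue your route you would have to show that when $|Q|\geq 4$ there is no slack in $\chi^{\mathrm{orb}}\leq 0$ coming from extra cone points over $f^{-1}(P)\setminus P$ or from other points of $P$, and controlling that slack is essentially the same preimage count the paper carries out directly.
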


\begin{proof}

Let $Q \subseteq P$ be the set of infinitely stable points for $f$, and suppose that $|Q|\geq 4$.  We will show that $f$ is exceptional.

Let $\tilde{Q} = f^{-1}(Q)$, and let $C = \Crit(f) \cap \tilde Q$.  If a non-critical marked point maps to an infinitely stable marked point, then it itself is infinitely stable, that is, 
$\tilde{Q}\subseteq Q\cup C$.
In particular,
\[
|\tilde{Q}| \leq |C| + |Q|.
\]
Since (counting with multiplicity) a critical point of degree $k$ accounts for $k$ pre-images of a point in $Q$, we also have
\[
|\tilde Q|  = |Q|d - \sum_{c\in C} \bigl( \deg_f(c) - 1\bigr).
\]
By the Riemann--Hurwitz formula and the preceding equality and inequality we have
\begin{align*}
2d-2\geq \sum_{c\in C}\bigl(\deg_f(c)-1\bigr) = |Q|d-|\tilde Q|  \geq |Q|d-|Q|-|C| = |Q|(d-1)-|C|.
\end{align*}
We conclude that $|C| \geq (|Q|-2)(d-1)$.  Since $d > 1$ and a branched cover $S^2 \to S^2$ of degree $d$ has at most $2d-2$ critical points, it follows that $|Q|\leq 4$.  By our earlier assumption that $|Q| \geq 4$, we conclude that $|Q|=4$. 

Replacing $|Q|$ with 4 in the above, we conclude that $|C|=2d-2$, so $C$ is equal to all of $\Crit(f)$ and each critical point is simple.  Moreover, the inequality must be an equality, so in particular $|\tilde{Q}|=|C|+|Q|$, which means that $C$ is disjoint from $Q$ and $Q\subseteq \tilde{Q}$.  This means that $f(Q)\subseteq Q$.  Since $Q$ contains all the critical values of~$f$, it follows that $Q$ contains the post-critical set.  

Because the preimage of each point of $Q$ is either in $Q$ or in $C$ it follows that every point in $Q$ must be post-critical.  Since the critical points are all simple, the ramification index at each point of $Q$ is 2.  In other words, the orbifold for $f$ is the $(2,2,2,2)$-orbifold.  As in the introduction, this is equivalent to the statement that $f$ is exceptional, as desired.
\end{proof}

Similar arguments can be used to derive a stronger conclusion if $P$ is the post-critical set: the second iterate $f^2$ must have fewer than $4$ stable marked points, and if $f$ is a topological polynomial then $f$ itself must have fewer than $4$ stable marked points.  Combining this with the proof of Proposition~\ref{prop:pullback} below, it follows that $\sigma_f^2$ is weakly contracting whenever $P$ is the post-critical set, and $\sigma_f$ is weakly contracting in this case if $f$ is a topological polynomial.


\section{Pullback is a weak contraction}
\label{sec:pullback}

The goal of this section is to prove Proposition~\ref{prop:pullback}, which states that the pullback map is non-expanding, and in many cases weakly contracting.  A refinement of this statement is given in Proposition~3.3 of  Douady--Hubbard.  Both of these results are in concert with a theorem of Royden, which says that analytic maps of Teichm\"uller space are weak contractions \cite{royden}.  As in the work of Douady--Hubbard, we will neither use the analyticity of the pullback map nor the Royden result. We begin with the requisite definitions; see \cite[Chapter 11]{primer} for more details.

\p{Teichm\"uller space and the pullback map} Let $\Sigma = (S,P)$ and let $f : \Sigma \to \Sigma$ be a dynamical branched cover.  The Teichm\"uller space $\Teich(\Sigma)$ is the set of complex structures on $\Sigma$ up to isotopy.  More specifically, a complex structure on $\Sigma$ is a complex structure on $S$ and two complex structures $X$ and $Y$ on $\Sigma$ are equivalent if there is a isomorphism $h : X \to Y$ that is isotopic to the identity (here we insist that $h(P)=P$ and that isotopies fix $P$).

The pullback map associated to $f$ is the map
\[
\sigma_f : \Teich(\Sigma) \to \Teich(\Sigma)
\]
defined by pulling back complex structures through $f$.  

\p{The Teichm\"uller metric and Teichm\"uller's theorems} The Teichm\"uller metric on $\Teich(\Sigma)$ is defined as follows.  For a map $h$ between Riemann surfaces, let $K(h)$ denote the quasi-conformal dilatation. Given $X,Y \in \Teich(\Sigma)$ we set
\[
K(X,Y) = \inf \{K(h) \mid h : X \to Y \text{ and } h \sim \textrm{id} \}
\]
and
\[
d(X,Y) = \frac{1}{2} \log K(X,Y).
\]
Teichm\"uller's existence theorem gives that the infimum is a minimum, that is, there is a map~$h$, called the Teichm\"uller map, that realizes the infimum \cite[Theorem 11.8]{primer}.  Teichmuller's uniqueness theorem states that the minimizing map $h$ is unique \cite[Theorem 11.9]{primer}. 

\p{Teichm\"uller maps and foliations} Teichm\"uller's existence theorem further gives an explicit description of the Teichm\"uller map~$h$.  Usually, this description is phrased in terms of quadratic differentials.  We avoid this terminology here.

For the description of $h$, we need the fact that a pair of transverse measured foliations $(\F^+,\F^-)$ on $\Sigma$ induces a complex structure on $\Sigma$; in other words, $(\F^+,\F^-)$ represents a point in $\Teich(\Sigma)$.  Indeed, $(\F^+,\F^-)$ gives a Euclidean structure on $\Sigma$ away from the singularities, and hence (orientation-preserving) charts to the complex plane, well-defined up to rotation.  If the charts identify segments of the leaves of $\F^+$ and $\F^-$ with horizontal and vertical line segments, then they are called natural coordinates for $(\F^+,\F^-)$.  These are well defined up to translation in~$\C$.

Now, Teichm\"uller's description of the Teichm\"uller map $h$ is that there is a pair of measured foliations $(\F^+,\F^-)$ on $\Sigma$ so that, setting $\lambda=\sqrt{K(h)}$, we have
\begin{itemize}
\item $(\F^+,\F^-)$ induces $X$,
\item $(\lambda \,\F^+,\tfrac{1}{\lambda}\,\F^-)$ induces $Y$, and
\item in natural coordinates with respect to these two pairs of foliations, $h$ is given by 
\[
\left(\begin{array}{cc}
  \lambda   & 0 \\
  0   & 1/\lambda
\end{array}\right)
\]
\end{itemize}

One way to rephrase Teichm\"uller's theorems is that every geodesic ray in $\Teich(\Sigma)$ is determined by a pair of transverse measured foliations $(\F^+,\F^-)$ on $\Sigma$, and the ray is obtained by multiplying $\F^+$ by $\lambda \geq 1$ and $\F^-$ by $1/\lambda$.

The measured foliations $\F^+$ and $\F^-$ must have singularities if $\chi(S) \neq 0$.  If there are any 1-pronged singularities, they must be at points of $P$, for otherwise $K(h)$ is not minimal.

\p{The pullback map is non-expanding or weakly contracting} Let $(T,d)$ be a metric space and let $\sigma : T \to T$.  We say that $\sigma$ is non-expanding if
\[
d(\sigma(x),\sigma(y)) \leq d(x,y)
\]
for all $x,y \in T$.  We say that $\sigma$ is weakly contracting if 
\[
d(\sigma(x),\sigma(y)) < d(x,y)
\]
for all distinct $x,y \in T$.  

\begin{proposition}
\label{prop:pullback}
Let $\Sigma = (S,P)$, and let $f: \Sigma \to \Sigma$ be a dynamical branched cover.
\begin{enumerate}[label={(\arabic*)},ref={\theproposition(\arabic*)}]
    \item\label{nonexp} The pullback map $\sigma_f$ is non-expanding.
    \item If $f$ is not exceptional and $\deg(f) > 1$, then $\sigma_f^k$ is weakly contracting for some $k \geq 1$.
\end{enumerate}
\end{proposition}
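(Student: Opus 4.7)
\p{Proof proposal} The strategy for both parts is to lift the Teichm\"uller map through $f$ and, for part~(2), to combine Teichm\"uller uniqueness with the combinatorial bound in Proposition~\ref{prop:stable}. For part~(1), given distinct $X, Y \in \Teich(\Sigma)$ I would take the Teichm\"uller map $h : X \to Y$ and lift it to $\tilde h : \sigma_f(X) \to \sigma_f(Y)$, where $\tilde h$ is isotopic to the identity and satisfies $f \circ \tilde h = h \circ f$. Since $f$ is holomorphic as a map between the pulled-back complex structures and their targets, dilatation is preserved in the lift: $K(\tilde h) = K(h)$. Hence $d(\sigma_f(X), \sigma_f(Y)) \leq \tfrac{1}{2} \log K(\tilde h) = d(X, Y)$, so $\sigma_f$ is non-expanding.

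For part~(2), since $\deg f > 1$ and $f$ is non-exceptional, we must have $S = S^2$ (on $T^2$, $\deg f > 1$ already forces $f$ to be exceptional). I would argue by contradiction. Assume no iterate $\sigma_f^k$ is weakly contracting. Then for every $k \geq 1$ there exist distinct $X_k, Y_k \in \Teich(\Sigma)$ with $d(\sigma_f^k(X_k), \sigma_f^k(Y_k)) = d(X_k, Y_k)$, and monotonicity from~(1) forces equality at every intermediate step. Let $h_k : X_k \to Y_k$ be the Teichm\"uller map with associated foliations $(\F^+_k, \F^-_k)$, and let $A_k \subseteq P$ be their $1$-pronged singular set. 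The lift $\tilde h_k$ has dilatation $K(h_k)$, which now equals the Teichm\"uller distance between $\sigma_f(X_k)$ and $\sigma_f(Y_k)$; Teichm\"uller uniqueness therefore identifies $\tilde h_k$ with the Teichm\"uller map at that stage, and its associated foliations must be the pullbacks $f^*(\F^+_k, \F^-_k)$.

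A local coordinate computation ($w = z^d$ at a point of local degree $d$) shows that the $1$-pronged singularities of $f^*(\F^+_k, \F^-_k)$ are precisely the non-critical preimages of points of $A_k$; for these to lie in $P$ -- as required of any Teichm\"uller pair -- one needs $f^{-1}(A_k) \subseteq P \cup \Crit(f)$. Iterating through $i \leq k$ gives $f^{-i}(A_k) \subseteq P \cup \Crit(f^i)$, so every point of $A_k$ is $i$-stable for all $i \leq k$. The Euler--Poincar\'e formula on $S^2$, however, forces $|A_k| \geq 4$, since the $1$-prong count satisfies $n_1 = 4 + n_3 + 2n_4 + \cdots$. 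Since $P$ is finite, pigeonhole produces a fixed $A \subseteq P$ with $|A| \geq 4$ that equals $A_k$ for arbitrarily large $k$; every point of $A$ is then infinitely stable, contradicting Proposition~\ref{prop:stable}. The delicate step is the translation between analytic and combinatorial data -- verifying that Teichm\"uller uniqueness really applies to $\tilde h_k$ and that a pulled-back Teichm\"uller pair acquires $1$-pronged singularities only at non-critical preimages of the original $1$-pronged singularities; once this is in hand, the Euler--Poincar\'e bound, pigeonhole, and Proposition~\ref{prop:stable} deliver the conclusion.
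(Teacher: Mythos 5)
Your argument is correct and rests on the same three pillars as the paper's proof: lifting the Teichm\"uller map through $f$ to get a map of the same dilatation (part 1 is identical to the paper's), the Euler--Poincar\'e count $n_1 \geq 4$ on $S^2$, and Proposition~\ref{prop:stable}, with Teichm\"uller uniqueness detecting when a lift fails to be extremal. The one structural difference is in part (2): the paper argues directly, fixing $X,Y$ and producing a $k$ for which one of the four $1$-pronged points of $P$ fails to be stable for $f^k$, whereas you run a contradiction through pigeonhole on the finitely many subsets of $P$. Your contradiction-plus-pigeonhole framing actually deals more carefully with a point the paper elides, namely that the conclusion requires a \emph{single} $k$ that works for all pairs $X,Y$, even though the $1$-pronged singular set of the Teichm\"uller map depends on the pair; the paper's direct route needs the additional observation that once some $f^{*j}(\F^\pm)$ has a $1$-prong off $P$, so does every later $f^{*j'}(\F^\pm)$ (which holds because $f^{-1}(P^c)\subseteq P^c$ and non-critical-value points off $P$ have only noncritical preimages), after which one can take $k$ to be a maximum over the finitely many non-infinitely-stable points. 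Both routes buy the same theorem; yours makes the uniformity automatic at the cost of an indirect argument.
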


\p{Idea of the proof and pseudo-Teichm\"uller maps} Before proving Proposition~\ref{prop:pullback}, we explain the main observation used in the proof.  Let $f : \Sigma \to \Sigma$ be a dynamical branched cover, let $X,Y \in \Teich(\Sigma)$, and let $h : X \to Y$ be a Teichm\"uller mapping.  Since $h$ is isotopic to the identity, there is a unique map $h^f$, which we call the lifted map, that is isotopic to the identity and so that the following diagram commutes:
\[
 \begin{tikzcd}
  \sigma_f(X) \arrow[r,"h^f"] \arrow[d,"f"] & \sigma_f(Y) \arrow[d,"f"] \\
 X \arrow[r,"h"]  & Y
 \end{tikzcd}
\]
We can incorporate the pair of foliations $(\F^+,\F^-)$ into the diagram:
\[
 \begin{tikzcd}
  (\sigma_f(X),f^*(\F^+,\F^-)) \arrow[r,"h^f"] \arrow[d,"f"] & (\sigma_f(Y),f^*(\lambda\,\F^+,\tfrac{1}{\lambda}\,\F^-)) \arrow[d,"f"] \\
 (X,(\F^+,\F^-)) \arrow[r,"h"]  & (Y,(\lambda\,\F^+,\tfrac{1}{\lambda}\,\F^-))
 \end{tikzcd}
\]
As the pullback $f^*(\lambda\,\F^+,\tfrac{1}{\lambda}\,\F^-)$ on the top right of the diagram is equal to $(\lambda\, f^*(\F^+),\tfrac{1}{\lambda}\,f^*(\F^-))$
the map $h^f$ has the same quasiconformal dilatation as $h$.  It locally behaves like a Teichm\"uller map whose associated foliations are the pullbacks of the foliations for $h$.  However, $h^f$ need not be a Teichm\"uller map, because it is possible that these foliations have 1-pronged singularities at unmarked preimages of points of $P$.  

In general, if a map is obtained from a Teichm\"uller map by forgetting a marked point at one of the associated 1-pronged singularities, we call that map a pseudo-Teichm\"uller mapping.  The key point is that pseudo-Teichm\"uller mappings are not themselves Teichm\"uller mappings.

\begin{proof}[Proof of Proposition~\ref{prop:pullback}] 

Let $X,Y \in \Teich(\Sigma)$.  Let $h : X \to Y$ be the Teichm\"uller map, which exists by Teichm\"uller's existence theorem.  As above, the lifted map
\[
h^f : \sigma_f(X) \to \sigma_f(Y)
\]
is a Teichm\"uller map or pseudo-Teichm\"uller map with the same quasi-conformal dilatation as~$h$.  The first statement follows now from the definition of the Teichm\"uller metric.  

Suppose now that $f$ is not exceptional and $\deg(f) > 1$.  In this case $S=S^2$ and $f$ is not the quotient of an affine map by the hyperelliptic involution.  Since $S=S^2$, the foliations associated to $h$ must have at least four 1-pronged singularities at points of $P$. By Proposition~\ref{prop:stable}, there is a $k$ so that at least one of these four points of $P$ fails to be stable for $f^k$.  Therefore the pulled back map
\[
h^{f^k} : \sigma_f^k(X) \to \sigma_f^k(Y)
\]
is a pseudo-Teichm\"uller map and not a Teichm\"uller map.  The second statement follows from Teichm\"uller's uniqueness theorem and the definition of the Teichm\"uller metric.
\end{proof}

As mentioned, the analogue of Proposition~\ref{prop:pullback} in Douady--Hubbard is their Proposition~3.3.  The key to that proof is their Lemma~1, which is the analogue of our observation that the pullback of a Teichm\"uller map is a pseudo-Teichm\"uller map.  There they observe that the pullback of a Beltrami differential $q$ has norm greater than or equal to that of $q$, and that we have equality if and only if the preimages of the images of the poles of $p$ are critical or post-critical.  Through the duality between equivalence classes of Beltrami differentials (tangent vectors for Teichm\"uller space) and holomorphic quadratic differentials (cotangent vectors for Teichm\"uller space), we see that the two arguments are essentially the same.  Indeed, a Beltrami differential can be thought of as an ellipse field, and there is a natural ellipse field associated to a Teichm\"uller map.  In this way, our argument using Teichm\"uller's theorems recovers the Douady--Hubbard statement that the derivative of (an iterate of) the pullback map is contracting \cite[Proposition 3.3]{DH}.


\section{Synthetic Nielsen--Thurston theory}
\label{sec:synthetic}

By a synthetic Nielsen--Thurston package, we mean a collection $(T,P,\phi,\sigma)$, where 
\begin{enumerate}
    \item $T$ is a uniquely geodesic metric space where all maximal geodesics are bi-infinite,
    \item $P$ is a group acting properly discontinuously on $T$, 
    \item $\phi : P \dasharrow P$ is a virtual endomorphism, and
    \item $\sigma : T \to T$ is a function that is intertwined with $\phi$ and is non-expanding.
\end{enumerate}
Here a \emph{virtual endomorphism} $\phi : P \dasharrow P$ is a homomorphism $L \to P$ where $L$ is a finite-index subgroup of $P$.  We say $\sigma$ is \emph{intertwined} with $\phi$ if $\sigma(g\cdot x) = \phi(g)\cdot \sigma(x)$ for all $x \in T$ and $g \in L$.

In this paper, the only synthetic Nielsen--Thurston packages we will consider are ones where the space $T$ is $\Teich(\Sigma)$ for some marked surface $\Sigma$, where $P$ is the pure mapping class group $\PMod(\Sigma)$, where $\phi$ is the lifting homomorphism associated to a given dynamical branched cover $f : \Sigma \to \Sigma$ (see Section~\ref{sec:proof}), and where $\sigma$ is the pullback map $\sigma_f$. Our axiomatic approach is meant to clarify which properties of these objects are essential for the argument.

We will write $\tau_\sigma(X)$ for $d(X,\sigma(X))$ and $\tau_\sigma$ for the translation distance, which is the infimum of $\tau_\sigma(X)$ over $X \in T$:
\[
\tau_\sigma = \inf_{X \in T} \tau_\sigma(X).
\]
In this section we prove three propositions about synthetic Nielsen--Thurston packages, Propositions~\ref{prop:metricmumford}, \ref{prop:metricorbit}, and~\ref{prop:metricray}.  These will be used in the proof of the Nielsen--Thurston \"Ubertheorem to address the cases where
\begin{enumerate}
    \item $\tau_\sigma$ is not realized and $\sigma_f$ is non-expanding,
    \item $\tau_\sigma$ is not realized and $\sigma_f$ is weakly contracting, and
    \item $\tau_\sigma$ is realized and $\sigma_f$ is non-expanding.
\end{enumerate}
In the proof of the \"Ubertheorem in Section~\ref{sec:proof}, these appear in Case 2 ($\deg f=1$ subcase), Case 2 ($\deg f>1$ subcase), and Case 3, respectively.

\p{Translation distances not realized} The following proposition is a slight generalization of one of the steps in the Bers proof of the Nielsen--Thurston classification \cite[Section 13.6.1, Step 1]{primer}.  In that classical setting, the map $\phi$ is simply the inner automorphism of the mapping class group corresponding to $f^{-1}$ (this makes sense because the lift of a homeomorphism $g$ under a homeomorphism $f$ is $f^{-1}gf$).

\begin{proposition}
\label{prop:metricmumford}
Let $(T,P,\phi,\sigma)$ be a synthetic Nielsen--Thurston package where $\tau_\sigma$ is not realized.  If $\{X_n\}$ is a sequence in $T$ with
\[
\tau_\sigma(X_n) \to \tau_\sigma,
\]
then the image of $\{X_n\}$ in $T/P$ is not contained in any compact set.
\end{proposition}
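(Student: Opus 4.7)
My plan is to argue by contradiction: assume the image of $\{\bar X_n\}$ in $T/P$ lies in a compact set, and produce a point at which $\tau_\sigma$ is realized, contradicting the hypothesis. This follows the Bers strategy, with the intertwining of $\sigma$ with the virtual endomorphism $\phi$ playing the role that conjugation by a mapping class plays in the classical argument. Throughout I will take the $P$-action on $T$ to be by isometries, as it is in the intended application to Teichm\"uller space.

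First I would pass to a subsequence so that $\bar X_n \to \bar X_\infty$ in $T/P$, and choose $g_n \in P$ with $g_n X_n \to X_\infty$ in $T$. Using the finite index of $L$ in $P$, a pigeonhole argument on cosets would let me further arrange $g_n \in L$ (at the cost of replacing $X_\infty$ by a fixed $P$-translate of itself). Then the isometric action of $P$ gives $d(g_n X_n, g_n \sigma(X_n)) = \tau_\sigma(X_n) \to \tau_\sigma$, so $g_n \sigma(X_n)$ stays in a bounded neighborhood of $X_\infty$; by properness of $T$ (which holds in the Teichm\"uller setting), passing to a further subsequence yields $g_n \sigma(X_n) \to Y_\infty \in T$ with $d(X_\infty, Y_\infty) = \tau_\sigma$.

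Next I would invoke the intertwining. For $g_n \in L$ the relation $\sigma(g_n X_n) = \phi(g_n)\sigma(X_n)$ rewrites as $g_n \sigma(X_n) = \alpha_n \sigma(g_n X_n)$ with $\alpha_n := g_n \phi(g_n)^{-1} \in P$. Continuity of $\sigma$ (which follows from the non-expanding hypothesis) gives $\sigma(g_n X_n) \to \sigma(X_\infty)$, and the isometric action of $\alpha_n$ then forces $\alpha_n \sigma(X_\infty) \to Y_\infty$. Taking compact neighborhoods $V$ of $\sigma(X_\infty)$ and $W$ of $Y_\infty$, we have $\alpha_n V \cap W \neq \emptyset$ for all large $n$; by proper discontinuity only finitely many $\alpha \in P$ can satisfy this, so after one more subsequence $\alpha_n = \alpha$ is constant.

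To close, fix a large $N$ and apply the intertwining to $g_N^{-1} \in L$ followed by the isometric action of $g_N$:
\[
\tau_\sigma(g_N^{-1} X_\infty) = d\bigl(g_N^{-1} X_\infty,\,\phi(g_N)^{-1}\sigma(X_\infty)\bigr) = d\bigl(X_\infty,\,g_N \phi(g_N)^{-1} \sigma(X_\infty)\bigr) = d(X_\infty, \alpha \sigma(X_\infty)) = \tau_\sigma,
\]
contradicting the assumption that $\tau_\sigma$ is not realized. The main obstacle will be the eventual constancy of $\alpha_n$: that step is where proper discontinuity of the $P$-action (as opposed to mere continuity) is indispensable, and it is precisely what the intertwining of $\sigma$ with $\phi$ is designed to enable in the absence of $P$-equivariance for $\sigma$.
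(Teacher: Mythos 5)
Your proof is correct and follows the same essential route as the paper's: both exploit the intertwining identity to recast $\tau_\sigma(g\cdot X)$ as the distance from $X$ to a $P$-translate of $\sigma(X)$ (you use $g\phi(g)^{-1}$, the paper $g^{-1}\phi(g)$) and then invoke proper discontinuity to stabilize the relevant group element. The paper packages the argument as a well-defined, continuous function $F(\pi(X))=\min_{g\in L}\tau_\sigma(g\cdot X)$ on $T/L$ evaluated at a limit point, while you carry out the analysis by explicit subsequence extraction; both versions quietly rely on an isometric $P$-action and enough properness of $T$ for limiting points to exist, which hold in the Teichm\"uller setting.
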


\begin{proof}

Suppose to the contrary that the image of $\{X_n\}$ has compact closure.  We will find a point $Z$ so that $\tau_\sigma(Z) \leq \tau_\sigma$, contrary to the assumption that $\tau_\sigma$ is not realized.  

Let $L$ be the domain of~$\phi$, and let $\pi : T \to T/L$ be the quotient map.  Since $L$ has finite index in $P$, the map $T/L \to T/P$ is finite-to-one.   Thus $\{\pi(X_n)\}$ has a limit point, which is $\pi(Y)$ for some $Y \in T$.  

The desired $Z$ will be in the $L$-orbit of $Y$.  To find this $Z$, we define $F \colon T/L \to[0,\infty)$ by
\[
F(\pi(X)) = \min_{g \in L} \tau_\sigma(g \cdot X).
\]
We will prove below that $F$ is well defined, which implies two further statements: 
\begin{enumerate}
\item $F$ is continuous, and
\item there exists $g \in L$ with  $\tau_\sigma(g \cdot Y) \leq \tau_\sigma$ $\Longleftrightarrow$ $F(\pi(Y)) \leq \tau_\sigma$.
\end{enumerate}
Moreover, the last inequality follows from the continuity of $F$ and the definition of $Y$.  

It remains to prove that $F$ is well defined.  To this end, we give another description of $F$.  Using the definition of $\tau_\sigma(g \cdot X)$, the assumption that $\sigma$ is intertwined with $\phi$, and the fact that elements of $L$ act by isometries on $\Teich(\Sigma)$, we have
\[
\tau_\sigma(g\cdot X) = d\bigl(g\cdot X,\sigma(g\cdot X)\bigr) = d\bigl(g\cdot X,\phi(g)\cdot \sigma(X)\bigr) =  d\bigl(X,g^{-1}\phi(g)\cdot \sigma(X)\bigr).
\]
From this we obtain the following description of $F$:
\[
F(\pi(X)) = \min_{g \in L} \, d(X,g^{-1}\phi(g) \cdot \sigma(X)).
\]
The set of points $g^{-1}\phi(g)\cdot \sigma(X)$ is a subset of the $P$-orbit of $\sigma(X)$.  Since $P$ acts properly discontinuously, the given minimum exists, which is to say $F$ is well defined.  
\end{proof}

We remark that the proof of Proposition~\ref{prop:metricmumford} does not use the non-expanding property of $\sigma$.


\p{Weakly contracting orbits} The next proposition is essentially the same as Proposition~5.1 of Douady--Hubbard.  We begin by giving the definition of a weakly contracting orbit.

Given a self-map $\sigma$ of a metric space $T$ and an orbit $\mathcal{O} = (X_i)_{i=1}^\infty$ where $X_i=\sigma^i(X)$, we say that $\mathcal{O}$ is weakly contracting if the sequence $d(X_i,X_{i+1})$ is strictly decreasing (in particular, no two $X_i$ are equal).  Since $d(X_{i+1},X_{i+2})$ is equal to $d(\sigma(X_i),\sigma(X_{i+1}))$, it follows that all orbits of a weakly contracting map are weakly contracting.  It also follows from the definitions that if all orbits of a map are weakly contracting, then the map has no fixed points.

\begin{proposition}
\label{prop:metricorbit}
Let $(T,P,\phi,\sigma)$ be a synthetic Nielsen--Thurston package.  If every orbit for $\sigma$ is weakly contracting, then every orbit leaves every compact subset of $T/P$.  
\end{proposition}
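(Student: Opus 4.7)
I argue by contradiction: assume some orbit $(X_i)_{i \geq 0}$ with $X_{i+1} = \sigma(X_i)$ has image in $T/P$ contained in a compact set $K$, and set $\delta := \lim_i d(X_i, X_{i+1})$, which exists because these distances are strictly decreasing. Two easy consequences of the weakly contracting hypothesis will be used throughout: $\sigma$ has no fixed points (a fixed point yields a constant orbit, not a strictly decreasing one), and $\tau_\sigma$ is not realized in $T$ (a realizer $Y$ would give $\tau_\sigma(\sigma Y) < \tau_\sigma(Y) = \tau_\sigma$, violating the infimum). Hence Proposition~\ref{prop:metricmumford} applies: any sequence in $T$ whose $\tau_\sigma$-values tend to $\tau_\sigma$ must have its image in $T/P$ escape every compact set. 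The strategy is to construct such a minimizing sequence all of whose images in $T/P$ lie in $\overline K$, yielding a contradiction.

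Following the lifting strategy of the proof of Proposition~\ref{prop:metricmumford}: since $T/L \to T/P$ is a proper finite-to-one map, the orbit's image in $T/L$ also lies in a compact subset; choosing $h_i \in L$ with $h_i X_i$ in a fixed compact $\widehat K \subseteq T$ and applying a diagonal extraction produces a subsequence $(i_k)$ along which $h_{i_k} X_{i_k + j} \to Y_j \in T$ for every $j \geq 0$, with $d(Y_j, Y_{j+1}) = \delta$. Continuity of $\sigma$, the intertwining $\sigma(h_{i_k} X_{i_k + j}) = \phi(h_{i_k}) X_{i_k + j + 1}$, and proper discontinuity of $P$ (applied to the elements $\phi(h_{i_k}) h_{i_k}^{-1}$, which carry one convergent sequence onto another) together force those elements to be eventually equal to some $q \in P$, so that $\sigma Y_j = q Y_{j+1}$ for all $j$; consequently $\pi_P(\sigma^m Y_0) = \pi_P(Y_m)$ for every $m$, so the $\sigma$-orbit of $Y_0$ has image in $T/P$ contained in $\overline K$. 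Applying the continuous function $F : T/L \to [0, \infty)$ from the proof of Proposition~\ref{prop:metricmumford}, one obtains $F(\pi(Y_0)) \leq \delta$, and choosing $g_0 \in L$ realizing this minimum produces $Y^{(0)} := g_0 Y_0$ with $\tau_\sigma(Y^{(0)}) \leq \delta$ and $\sigma$-orbit still in $\overline K$.

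I then iterate this construction. Since the orbit of $Y^{(0)}$ is weakly contracting with asymptotic distance $\delta_1 < \tau_\sigma(Y^{(0)}) \leq \delta$ and image in $\overline K$, applying the same procedure to it yields $Y^{(1)}$ with $\tau_\sigma(Y^{(1)}) \leq \delta_1$ and $\sigma$-orbit in $\overline K$. Repeating produces a strictly decreasing sequence $\delta_0 := \delta > \delta_1 > \delta_2 > \cdots$ and corresponding points $Y^{(n)} \in T$ with $\tau_\sigma(Y^{(n)}) \leq \delta_n$, all of whose $\sigma$-orbits remain in $\overline K$. Continuity of $F$ at accumulation points of $(\pi(Y^{(n)}))$ in the compact subset of $T/L$ prevents the descent from stalling at any $\delta_\infty > \tau_\sigma$: at such a value one can always locate a further point in $T$ with strictly smaller $\tau_\sigma$-value whose orbit still projects into $\overline K$, by picking a minimizer of $F$ at an accumulation point. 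Hence $\tau_\sigma(Y^{(n)}) \to \tau_\sigma$, producing the forbidden minimizing sequence in $\overline K$ and completing the contradiction. The main technical hurdle is justifying this descent-does-not-stall claim, which requires carefully combining proper discontinuity, continuity of $F$, and the iterative construction to verify that each iterate's orbit really does stay in $\overline K$ and admits further descent.
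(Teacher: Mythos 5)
Your overall strategy---extract an accumulation point of a bounded orbit, use it to build a $\tau_\sigma$-minimizing sequence still confined to a compact set, and then contradict Proposition~\ref{prop:metricmumford}---is genuinely different from the paper's approach and, if it worked, would be a reasonable alternative. But two of the intermediate claims do not survive scrutiny, and one of them points to the key simplification you are missing.

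The first gap is the claim that the full $\sigma$-orbit of $Y_0$ (and later of $Y^{(0)} = g_0 Y_0$) projects into $\overline K$. From the intertwining $\sigma(h_{i_k} X_{i_k+j}) = \phi(h_{i_k}) X_{i_k+j+1}$ and proper discontinuity you obtain $\sigma Y_j = q\, Y_{j+1}$ with $q = \phi(h_{i_k}) h_{i_k}^{-1}$ eventually constant; this gives $\pi_P(\sigma Y_0)=\pi_P(Y_1)\in\overline K$. But to push this to $\sigma^2 Y_0 = \sigma(q Y_1)$ you would need $\sigma(q\cdot{-}) = \phi(q)\,\sigma(-)$, i.e.\ $q\in L$, and nothing forces $q=\phi(h)h^{-1}$ to land back in $L$. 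The same obstruction stops you from controlling $\sigma^2(g_0 Y_0)$: you would need $\phi(g_0)\in L$. More generally, with accumulation points taken modulo $L = L_1$ you can track \emph{one} step of the orbit through the group action; tracking $m$ steps requires working modulo $L_m$. Your argument implicitly tries to track the entire orbit, which would need $q\in \bigcap_m L_m$, and this is not available.

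The second gap is the descent: you acknowledge the ``does not stall above $\tau_\sigma$'' step is not justified, and it is not merely a technicality. Even granting that each iterate's orbit stays in $\overline K$, the claim that continuity of $F$ at an accumulation point produces a strictly smaller $\tau_\sigma$-value requires an argument; as stated, it is a statement of intent rather than a proof.

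Both difficulties evaporate once you notice that you never need the whole orbit of $Y_0$, nor an infinite descent. The paper works from the start with $L_2 = \{g\in P : \phi^2(g)\text{ defined}\}$, which is still finite index, and with the function
\[
F(\pi(X)) = \min_{g\in L_2}\Bigl\{\bigl|\tau_\sigma(gX)-\delta\bigr|+\bigl|\tau_\sigma(\sigma(gX))-\delta\bigr|\Bigr\}.
\]
Proper discontinuity modulo $L_2$ lets you control \emph{three} consecutive orbit points, which is all that is needed: continuity forces $F(\pi(Y))=0$, so some $L_2$-translate of $Y$ has $\tau_\sigma(Y)=\tau_\sigma(\sigma Y)=\delta$, and this single algebraic equality already kills the weakly contracting hypothesis (it forces $d(Y,\sigma Y)=0$, hence a fixed point). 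No appeal to Proposition~\ref{prop:metricmumford} and no iteration is required. I would suggest reworking your proof along these lines: replace $L$ by $L_2$, design $F$ to detect two equal consecutive distances, and use that directly against weak contraction.
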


Note that Proposition~\ref{prop:metricorbit} applies whenever $\sigma$ is weakly contracting and $\tau_\sigma$ is not realized, since having a fixed point implies that $\tau_\sigma$ is realized (and is equal to 0).

\begin{proof}[Proof of Proposition~\ref{prop:metricorbit}]

Let $\mathcal{O}=(X_i)$ be a $\sigma$-orbit.  Suppose for the sake of contradiction that the image of $\mathcal{O}$ in $T/P$ has compact closure.  In order to obtain a contradiction, we will find another $\sigma$-orbit $(Y_i)$ whose first three terms satisfy
\[
d(Y_0,Y_1) = d(Y_1,Y_2).
\]
Here is why this is a contradiction.  Since $Y_i$ is a $\sigma$-orbit, the above equality is equivalent to
\[
d(Y,\sigma(Y))=d(\sigma(Y),\sigma^2(Y))
\]
where $Y=Y_0$; equivalently, $\tau_\sigma(Y) = \tau_\sigma(\sigma(Y))$.  By the weakly contracting property of $\sigma$, this implies that $d(Y,\sigma(Y))=0$, which is to say that $\sigma$ has a fixed point, contrary to the assumption that all orbits of $\sigma$ are weakly contracting.  

To find such a $Y=Y_0$, our strategy is similar to the one used in the proof of Proposition~\ref{prop:metricmumford}.  Because we need to analyze three consecutive points in an orbit, instead of just two, we need to replace $T/L$ with a further finite cover of $T/P$.  To this end, let
\[
L_2 =  \{g \in P \mid \phi^2(g)\text{ is defined}\}.
\]
The subgroup $L_2$ has finite index in~$P$. Let $\pi$ be the quotient map
\[
\pi : T \to T/L_2.
\]
Since $L_2$ has finite index, the sequence $\{\pi(X_i)\}$ has a limit point, which is $\pi(Y)$ for some $Y \in T$.  We will show that, up to replacing $Y$ with another point in its $L_2$-orbit, $\tau_\sigma(Y) = \tau_\sigma(\sigma(Y))$. 

First we define a function $F$, analogous to the one in the proof of Proposition~\ref{prop:metricmumford}.  Since the sequence $\tau_\sigma(X_i) = d(X_i,X_{i+1})$ is non-negative and strictly decreasing (by the weakly contracting assumption), it converges to some $\delta\geq 0$.  We define $F\colon T/L_2 \to [0,\infty)$
by
\[F\bigl(\pi(X)\bigr) =  \min_{g\in L_2} \left\{ \bigl|\tau_\sigma(g \cdot X) - \delta\bigr| \ + \  \bigl|\tau_\sigma(\sigma(g \cdot X)) - \delta\bigr| \right\}.
\]
Assuming $F$ is well defined we have
\[
F\bigl(\pi(X_i)\bigr) \leq \left|\tau_\sigma(X_i) -\delta\right| \ + \  \left|\tau_\sigma(X_{i+1}) -\delta \right|
\]
for each $i$.  It follows that $F\bigl(\pi(X_i)\bigr) \to 0$.

We now use $F$ to analyze $Y$.  Again assuming $F$ is well defined, it is continuous.  Therefore, the statement $F\bigl(\pi(X_i)\bigr) \to 0$ implies that $F\bigl(\pi( Y) \bigr)=0$. Thus, after possibly replacing $Y$ with a different point in its $L_2$-orbit, we have 
\[
 \bigl|\tau_\sigma(Y)-\delta\bigr|\  +\ \bigl|\tau_\sigma(\sigma(Y))-\delta\bigr| = 0.
\]
It follows that $\tau_\sigma(Y)$ and $\tau_\sigma(\sigma(Y))$ are both equal to $\delta$, and in particular are equal to each other, as desired.

It remains to prove that $F$ is well defined.  Similar to the proof of Proposition~\ref{prop:metricmumford}, the intertwining with $\phi$ gives that 
\[
F\bigl(\pi(X)\bigr) = \min_{g\in L_2}\left\{ \bigl|d\bigl(X,g^{-1}\phi(g)\cdot\sigma(X)\bigr) - \delta\bigr| + \bigl|d\bigl(\sigma(X),\phi(g)^{-1}\phi^2(g)\cdot\sigma^2(X)\bigr) - \delta\bigr| \right\}.
\]
Again, since the action of $P$ on $T$ is properly discontinuous, the same is true for $L_2$.  Thus, the minimum exists and $F$ is well defined.
\end{proof}

\p{Forward translations along rays} The next proposition is a version of one of the steps of the Bers proof of the Nielsen--Thurston classification \cite[Section 13.6.4, Step 1]{primer}.  Here we generalize to the case where $\tau_\sigma$ is non-expanding.  The proof is almost unchanged.  We begin by defining forward translation along a ray.

Let $\gamma$ be a ray in a metric space $T$, and say that $\gamma$ has a unit speed parameterization as $\gamma : [0,\infty) \to T$.  For any interval $J \subset [0,\infty)$ we have a (possibly infinite) segment $\gamma|J$ of $\gamma$.  The forward translation of $\gamma|J$ along $\gamma$ by $d$ is the segment $\gamma : J \to \gamma$ given by 
\[
\gamma(t) \mapsto \gamma(t+d).
\]
This map is an isometric embedding of $\gamma$ into itself.

\begin{proposition}
\label{prop:metricray}
Let $(T,P,\phi,\sigma)$ be a synthetic Nielsen--Thurston package.  Suppose $\tau_\sigma$ is positive and that $X \in T$ realizes $\tau_\sigma$.  Let $\gamma$ be the geodesic ray from $X$ through $\sigma(X)$.   Then $\sigma|\gamma$ is the forward translation of $\gamma$ by $\tau_\sigma$.  In particular, $\sigma$ is not weakly contracting.
\end{proposition}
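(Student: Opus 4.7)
The plan is to show that $\sigma$ sends the ray $\gamma$ into itself by translating each point forward by $\tau_\sigma$, from which the ``not weakly contracting'' assertion is immediate. Write $Y = \sigma(X) = \gamma(\tau_\sigma)$. I would begin with the preliminary observation that every point of the forward orbit of $X$ also realizes $\tau_\sigma$: a short induction combining the non-expanding property with the minimality of $\tau_\sigma$ gives $d(\sigma^n(X),\sigma^{n+1}(X)) = \tau_\sigma$ for every $n \geq 0$.

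The heart of the argument is the following estimate. For an arbitrary $Z = \gamma(t)$ with $0 \leq t \leq \tau_\sigma$, the non-expanding property gives $d(Y,\sigma(Z)) = d(\sigma(X),\sigma(Z)) \leq d(X,Z) = t$, and $d(Z,Y) = \tau_\sigma - t$ since $Z$ and $Y$ both lie on $\gamma$. The triangle inequality then yields
$$d(Z,\sigma(Z)) \;\leq\; d(Z,Y) + d(Y,\sigma(Z)) \;\leq\; (\tau_\sigma - t) + t \;=\; \tau_\sigma.$$
Minimality of $\tau_\sigma$ forces every inequality to be an equality, so $d(Z,\sigma(Z)) = \tau_\sigma$, $d(Y,\sigma(Z)) = t$, and $Y$ lies on the unique geodesic from $Z$ to $\sigma(Z)$. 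The initial segment $[Z,Y]$ of that geodesic coincides with $\gamma|[t,\tau_\sigma]$ by unique geodesics, so $[Z,\sigma(Z)]$ is a geodesic extension of $\gamma|[t,\tau_\sigma]$ past $Y$ by length $t$. Invoking the axiom that maximal geodesics in $T$ are bi-infinite, so that $\gamma$ is the prolongation of $\gamma|[t,\tau_\sigma]$ past $Y$, identifies $\sigma(Z)$ with $\gamma(t + \tau_\sigma)$.

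To upgrade the identity $\sigma(\gamma(t)) = \gamma(t+\tau_\sigma)$ from $t \in [0,\tau_\sigma]$ to all $t \geq 0$, I would iterate: the preliminary step shows $\sigma^n(X) = \gamma(n\tau_\sigma)$ is again a realizing point, and rerunning the heart argument with $\sigma^n(X)$ and the sub-ray of $\gamma$ starting at $\gamma(n\tau_\sigma)$ in place of $X$ and $\gamma$ extends the identity to $[n\tau_\sigma,(n+1)\tau_\sigma]$; the union covers all $t \geq 0$. The non-weak-contraction conclusion then follows from $\gamma(0) \neq \gamma(\tau_\sigma)$ (since $\tau_\sigma > 0$) together with $d(\sigma(\gamma(0)),\sigma(\gamma(\tau_\sigma))) = d(\gamma(\tau_\sigma),\gamma(2\tau_\sigma)) = \tau_\sigma = d(\gamma(0),\gamma(\tau_\sigma))$.

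The main subtlety to navigate is the identification of $\sigma(Z)$ with the specific point $\gamma(t+\tau_\sigma)$: the triangle-inequality collapse only forces $\sigma(Z)$ to lie on \emph{some} geodesic extension of $\gamma|[t,\tau_\sigma]$ past $Y$, and it is the bi-infinite maximal geodesic axiom, read as uniqueness of prolongation, that makes this extension $\gamma$ itself.
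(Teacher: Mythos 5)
Your proof is correct and follows essentially the same route as the paper's: the same chain of two triangle inequalities combined with non-expansion, the same collapse to equalities from minimality of $\tau_\sigma$, the same use of unique geodesics to place $\sigma(Z)$ on $\gamma$, and the same inductive extension along the ray. Your explicit flagging of the prolongation subtlety -- that one must read the ``maximal geodesics are bi-infinite'' axiom as furnishing \emph{unique} continuation past $\sigma(X)$ -- is a point the paper's proof uses but does not call out, and is worth having on the record.
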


\begin{proof}

Let $Y$ be a point on $\gamma$ between $X$ and $\sigma(X)$.  Using the triangle inequality twice and the assumption that $\sigma$ is non-expanding, we have
\begin{align*}
d(Y,\sigma(Y)) &\leq d(Y,\sigma(X)) + d(\sigma(X),\sigma(Y)) \\
&\leq d(Y,\sigma(X)) + d(X,Y) \\
& = d(X,\sigma(X)) \\
&= \tau_\sigma.
\end{align*}
By the definition of $\tau_\sigma$ as an infimum, each of the above inequalities is an equality.  By the first (in)equality and the assumption that $T$ is uniquely geodesic, it must be that $\sigma(Y)$ lies on $\gamma$.  By the second (in)equality, $\sigma$ preserves the distance between $X$ and $Y$.  Combining the last two statements and the fact that $Y$ was arbitrary, we find that the restriction of $\sigma$ to the initial segment of $\gamma$ from $X$ to $\sigma(X)$ is forward translation along $\gamma$ by $\tau_\sigma$.  Inductively, we see that the restriction of $\sigma$ to the segment of $\gamma$ from $\sigma^k(X)$ to $\sigma^{k+1}(X)$ is forward translation along $\gamma$ by $\tau_\sigma$, whence the proposition.
\end{proof}


\section{Proof of the \"Ubertheorem: Non-exceptional cases}
\label{sec:proof}

In this section we combine the results of the previous three sections to prove the Nielsen--Thurston \"Ubertheorem in the non-exceptional cases.  In preparation, we present some of the requisite terminology and state and prove a series of three lemmas.

\p{Modulus} For $r > 1$ the modulus of the standard annulus $1 < |z| < r$ is $\ln r/2\pi$.  The modulus of an an arbitrary annulus (annular domain) is the modulus of the unique standard annulus to which it is biholomorphic.  We note that the standard annulus is conformally equivalent to a Euclidean cylinder of height $\ln r$ and circumference $2\pi$. 

For $X \in \Teich(\Sigma)$ and $A \subseteq \Sigma$ and embedded annulus we denote by $\mu_x(A)$ the modulus of $A$.  Similarly, for $\gamma$ a simple closed curve in $\Sigma$ we denote by $\mu_X(\gamma)$ the supremum of $\mu_X(A)$ over all embedded annuli $A$ in $\Sigma$ homotopic to $\gamma$.   We denote by $\mu(X)$ the supremum of $\mu_X(\gamma)$ as $\gamma$ ranges over all simple closed curves in $\Sigma$.

\p{Covering modulus} We require another version of modulus.  Let $\gamma$ be an essential closed curve in a Riemann surface $X$.  There is an annular cover $\tilde X_\gamma \to X$ corresponding to $\gamma$, which is unique up to biholomorphism.  We define the covering modulus of $\gamma$ to be
\[
\tilde \mu_X(\gamma) = \mu(\tilde X_\gamma).
\]
It is a fact that $\tilde \mu_X(\gamma)$ is $\pi/\ell_X(\gamma)$, where $\ell_X(\gamma)$ is the length of the geodesic in the free homotopy class of $\gamma$, with respect to the hyperbolic metric associated to $X$. 

\p{The Margulis number} The Margulis number $\epsilon$ is a real number with the properties that (1) any closed curve  $\gamma$ with covering modulus $\tilde \mu_X(\gamma) > \epsilon$ is a multiple of a simple closed curve, and (2) if $\gamma_1$ and $\gamma_2$ are simple closed curves with $\mu_X(\gamma_i) \geq \epsilon$, then there are disjoint annuli homotopic to $\gamma_1$ and $\gamma_2$, respectively, each of modulus $\epsilon' = \mu_X(\gamma_i) -1$; see \cite[Lemma 13.6]{primer}.  The second fact, sometimes called the collar lemma, implies that if two simple closed curves in $\Sigma$ have modulus greater than or equal to $\epsilon$ then they are homotopic to disjoint curves.

Let $\xi(\Sigma)$ denote the maximum number of pairwise disjoint, pairwise non-homotopic, simple closed curves in $\Sigma$. This is an upper bound for the number of homotopy classes of simple closed curves $\gamma$ with $\mu_X(\gamma) > \epsilon$.

\p{Modulus-degree inequality} Let $f \colon X' \to X$ be a (holomorphic) covering map of Riemann surfaces, and let $\gamma'$ be a component of $f^{-1}(\gamma)$.  We denote by $\deg f|\gamma'$ the degree of the restriction of $f$ to $\gamma'$.   Then
\[
\mu_{X'}(\gamma') \leq \frac{\mu_X(\gamma)+1}{\deg f|\gamma'}.
\]
This fact, which we refer to as the modulus-degree inequality, follows from two other facts: (1)~the covering modulus multiplies by exactly $\deg f|\gamma'$ under the cover, and (2) the fact that 
\[
\tilde \mu_X(\gamma)-1 \leq \mu_X(\gamma) \leq \tilde \mu_X(\gamma).
\]
The right-hand inequality here is immediate, since an annulus in $X$ lifts to an annulus in $\tilde X_\gamma$.  The left-hand inequality follows from the quantitative version of the collar lemma given above.  The left-hand inequality also follows from Maskit's comparisons between extremal length and modulus \cite[Propositions 1 and 2]{maskit}.

\p{The Gr\"otzch inequality} The next ingredient  is a version of the classical Gr\"otzch inequality, adapted from the case of rectangles to the case of annuli; see \cite[Theorem 11.10]{primer}.  It states that given $X,Y \in \Teich(\Sigma)$, a $K$-quasiconformal map $h : X \to Y$, and a simple closed curve $\gamma$ in $\Sigma$ we have
\[
\frac{1}{K}\mu_X(\gamma) \leq \mu_Y(h(\gamma)) \leq K \mu_X(\gamma).
\]
Applying this fact to the Teichm\"uller map $h :X \to Y$ we obtain
\[
\frac{1}{e^{2d(X,Y)}} \mu_X(\gamma) \leq \mu_Y(\gamma) \leq e^{2d(X,Y)} \mu_X(\gamma).
\]

\p{Finding stable multicurves}  If $X\in\Teich(\Sigma)$ and $\Gamma$ is a multicurve in $\Sigma$, let $\mu_X(\Gamma)$ denote the vector of moduli of the components of~$\Gamma$ (we emphasize that each component is the modulus of a single curve).   Also, for a dynamical branched cover $f : \Sigma \to \Sigma$ and $\Gamma$ a multicurve in $\Sigma$, we define the \emph{full preimage} of $\Gamma$ to be the set of all homotopy classes of simple closed curves in $\Sigma$ that map to components of $\Gamma$ under a power of $f$.  The following lemma is essentially the same as Proposition~8.1(a) in Douady--Hubbard.  

\begin{lemma}\label{lem:InvariantMulticurves}
Let $f \colon \Sigma\to \Sigma$ be a dynamical branched cover, and let $D > 0$.  There exists an $N>0$, depending only on $\Sigma$, $\deg f$, and $D$ with the following property: for any multicurve $\Gamma$ in~$\Sigma$ and any $X \in \Teich(\Sigma)$ with
\[
\mu_X(\Gamma) > (N,\ldots,N) \quad \text{and} \quad 
   \tau_{\sigma_f}(X) \leq D,
\]
the full preimage of $\Gamma$ is an $f$-stable multicurve.
\end{lemma}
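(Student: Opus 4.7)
The plan is to combine a quantitative modulus lower bound for iterated preimages of $\Gamma$ with the Margulis bound on homotopy classes of thick curves, forcing the preimage tree to stabilize after a uniformly bounded number of steps. Throughout I write $d=\deg f$, $\xi=\xi(\Sigma)$, and $\epsilon$ for the Margulis number.

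The key estimate is that for any $k\geq 0$, any component $\beta$ of $\Gamma$, and any component $\alpha$ of $f^{-k}(\beta)$,
\[
\mu_X(\alpha) \ \geq\ \frac{\mu_X(\beta)}{(d\,e^{2D})^k}.
\]
To prove this, $f^k\colon \sigma_f^k(X)\to X$ is a holomorphic branched cover (a composition of $k$ pullback steps), so the maximum-modulus annulus around $\beta$ in $X$ pulls back under $f^{-k}$ to a connected annular cover of degree $\deg(f^k|\alpha)\leq d^k$; the lower half of the modulus--degree inequality then yields $\mu_{\sigma_f^k(X)}(\alpha)\geq \mu_X(\beta)/d^k$. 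Since $\sigma_f$ is non-expanding (Proposition~\ref{nonexp}), the triangle inequality gives $d(X,\sigma_f^k(X))\leq kD$, and the Gr\"otzsch inequality transfers the annulus back to $X$ at the cost of a factor of $e^{2kD}$.

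I then set $N=\epsilon\,(d\,e^{2D})^{\xi+1}$. With this choice, the estimate guarantees that every essential simple component of $f^{-k}(\Gamma)$ with $k\leq \xi+1$ has modulus strictly greater than $\epsilon$ in $X$; by the collar-lemma portion of the Margulis property, such classes are pairwise disjoint-homotopic and at most $\xi$ in number. Let $U_k$ denote the set of homotopy classes of essential simple components of $\bigcup_{j\leq k}f^{-j}(\Gamma)$. The nondecreasing chain $U_0\subseteq U_1\subseteq\cdots\subseteq U_{\xi+1}$ is bounded in size by $\xi$, so pigeonhole yields some $k_0\leq\xi$ with $U_{k_0}=U_{k_0+1}$; setting $\tilde\Gamma:=U_{k_0}$ gives a multicurve of at most $\xi$ pairwise disjoint, essential, simple, pairwise non-homotopic classes, which is the full preimage of $\Gamma$.

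Finally, stability $f^*(\tilde\Gamma)\supseteq\tilde\Gamma$ follows from the stabilization identity $U_{k_0}=U_{k_0+1}$ together with the observation that, when $\delta$ is an essential simple component of $f^{-j}(\Gamma)$ for $j\geq 1$, the forward image $f(\delta)$ is an essential simple component of $f^{-(j-1)}(\Gamma)\subseteq\tilde\Gamma$, exhibiting $\delta$ itself as an essential simple component of $f^{-1}(\tilde\Gamma)$. The main obstacle is the modulus estimate of the first step: the bound $(d\,e^{2D})^{-k}$ deteriorates exponentially in $k$, and the argument only works because the Margulis finiteness forces the preimage chain to have stabilized within $\xi+1$ iterations, permitting a uniform choice of $N$ depending only on $\Sigma$, $d$, and $D$.
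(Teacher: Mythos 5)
Your argument is essentially the paper's: the paper also defines the nested sequence of essential preimage classes (there called $\Gamma_j$, your $U_j$), uses the Gr\"otzsch inequality together with the fact that a degree-$d^i$ cover of an annulus has degree at most $d^i$ to get a uniform modulus lower bound over $\xi(\Sigma)$ iterated preimages, applies the Margulis/collar bound to show the $\Gamma_j$ are multicurves of size at most $\xi(\Sigma)$, and then pigeonholes to find a stabilization index. The paper's constant is $N=(e^{2D}d)^{\xi(\Sigma)}\epsilon$ rather than your $\epsilon(de^{2D})^{\xi(\Sigma)+1}$, but this is immaterial.

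Two small points worth straightening out. First, the quantity you invoke as ``the lower half of the modulus--degree inequality'' is not the modulus--degree inequality as the paper states it (that one gives an \emph{upper} bound $\mu_{X'}(\gamma')\leq(\mu_X(\gamma)+1)/\deg f|\gamma'$). What you actually use, correctly, is the elementary fact that a maximal embedded annulus homotopic to $\beta$ in $X$ pulls back under the holomorphic cover $f^k$ to an annulus around $\alpha$ of modulus $\mu_X(A)/\deg(f^k|\alpha)$, so that $\mu_{\sigma_f^k(X)}(\alpha)\geq\mu_X(\beta)/d^k$; just say that, rather than citing a named inequality that points the other way. Second, your stability verification only handles $\delta$'s arising as essential components of $f^{-j}(\Gamma)$ with $j\geq 1$; it silently omits components of $\Gamma$ itself that do not reappear as essential preimages at any positive level. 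To make the ``$f(\delta)$ is essential'' step airtight, one should also observe that preimages of inessential simple closed curves under a dynamical branched cover are inessential (a short Riemann--Hurwitz argument on the bounded disk), which is what rules out an essential $\delta$ having inessential image. These are refinements of exposition; the route is the same as the paper's.
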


\begin{proof}

Let $K=e^{2D}$, and let $N=(Kd)^{\xi(\Sigma)}\epsilon$, where $d$ is the degree of~$f$.  For each $j \geq 0$ let $\Gamma_j$ be the collection of all homotopy classes of essential curves in $f^{-i}(\Gamma)$ for $0\leq i\leq j$. 

We claim that for $0 \leq j \leq \xi(\Sigma)$ the collection $\Gamma_j$ is a multicurve.  By the properties of the Margulis constant $\epsilon$, it suffices to show that each component of $\Gamma_j$ has modulus bounded below by $\epsilon$.  We now prove this.  Since $\sigma_f$ is non-expanding and $\tau_{\sigma_f}(X)\leq D$, we have $\tau_{\sigma_f^i}(X) \leq iD$ for all $i\geq 0$, so each of the associated Teichm\"uller maps $X\to \sigma_f^i(X)$ is \mbox{$K^i$-quasiconformal}. 
Let $\gamma'$ be a component of $\Gamma_j$; say $\gamma'$ is a component of $f^{-i}(\Gamma)$.  By the Gr\"otzch inequality, we have
\[
\mu_{X}(\gamma') \geq  \frac{\mu_{\sigma_f^i(X)}(\gamma')}{K^i} \geq \frac{\mu_X(\gamma)}{K^id^i} \geq \frac{N}{K^id^i} \geq \frac{N}{K^{\xi(\Sigma)}d^{\xi(\Sigma)}} = \epsilon
\]
(for the second inequality, we use the fact that if we restrict a degree $d^i$ cover to a cover of annuli, then the latter has degree at most $d^i$).  Since $\gamma'$ was arbitrary, the claim follows.  

We next claim that some $\Gamma_j$ is $f$-stable.  Indeed, we have inclusions $\Gamma_0 \subseteq \Gamma_1 \subseteq \cdots \subseteq \Gamma_{\xi(S)}$.  Since $\Gamma_{\xi(S)}$ is a multicurve, we know that $|\Gamma_{\xi(S)}|\leq \xi(S)$, so there exists a $j< \xi(S)$ such that $\Gamma_j=\Gamma_{j+1}$, which implies that $\Gamma_j$ is an $f$-stable multicurve, as desired.
\end{proof}

\p{Uniform contraction} The following lemma is a basic linear algebra fact.  We will use it in the proof of the \"Ubertheorem to show that if a stable multicurve is not a strong reduction system, then under pullback (by a suitable power) the moduli of the curves fails to increase.  

For a matrix $A$, let $\|A\|$ denote the operator norm of a matrix $A$ with respect to the sup norm on $\R^n$.  We also denote by $\| \vec v \|$ the sup norm of $\vec v \in \R^n$.   We denote by $\rho(A)$ the spectral radius of $A$.  

\begin{lemma}
\label{lem:m}
There exists a number $p = p(\Sigma,d)$ with the following property.  If $f : \Sigma \to \Sigma$ is a dynamical branched cover of degree $d$ with $f$-stable multicurve $\Gamma$ and associated transition matrix $A$ then
\[
\rho(A) < 1 \quad\Rightarrow\quad \|A^p\| < \frac{1}{2}.
\]
\end{lemma}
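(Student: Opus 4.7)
The plan is to show that the transition matrix $A$ arising in the lemma is drawn from a finite set $\mathcal{A}(\Sigma, d)$ determined only by $\Sigma$ and $d$. Once this finiteness is in hand, the uniform $p$ in the conclusion can be extracted as the maximum of finitely many matrix-dependent exponents, one for each matrix of spectral radius less than $1$.

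First I would bound the size of $A$: since any $f$-stable multicurve $\Gamma$ on $\Sigma$ is in particular a multicurve, it has at most $\xi(\Sigma)$ components, so $A$ is at most $\xi(\Sigma) \times \xi(\Sigma)$. Next I would bound the entries. By definition, $A_{ij}$ is a sum $\sum_\alpha 1/\deg(f|\alpha)$ where $\alpha$ ranges over the components of $f^{-1}(\gamma_j)$ that are homotopic to $\gamma_i$. Since $\sum_{\alpha \subset f^{-1}(\gamma_j)} \deg(f|\alpha) = d$, the preimage of any single curve of $\Gamma$ has at most $d$ components, and each local degree lies in $\{1, 2, \ldots, d\}$. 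Consequently every entry of $A$ lies in the finite set of non-negative rationals expressible as a sum of at most $d$ terms from $\{1/1, 1/2, \ldots, 1/d\}$. Combining the size bound with the entry bound, the collection $\mathcal{A}(\Sigma, d)$ of all transition matrices that can arise from a degree $d$ dynamical branched cover of $\Sigma$ together with an $f$-stable multicurve is finite.

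To finish, let $\mathcal{A}_0 \subseteq \mathcal{A}(\Sigma, d)$ be the subset of matrices with $\rho(A) < 1$. For each such $A$, the standard fact that $\rho(A) < 1$ implies $\|A^k\| \to 0$ as $k \to \infty$ (via Gelfand's formula, or by passing to Jordan form) produces an integer $p_A$ with $\|A^k\| < 1/2$ for all $k \geq p_A$. Setting $p = \max_{A \in \mathcal{A}_0} p_A$, which is well-defined since $\mathcal{A}_0$ is finite, gives the desired uniform exponent.

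The only substantive step is the finiteness of $\mathcal{A}(\Sigma, d)$, and this rests on a short combinatorial bookkeeping argument: the number of components is bounded by the topology of $\Sigma$, and the entries are sharply constrained by the Riemann--Hurwitz-type accounting for preimages when $d$ is fixed. I do not anticipate any serious obstacle; the lemma is essentially a compactness statement packaged into the form needed for the proof of the \"Ubertheorem.
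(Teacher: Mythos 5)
Your proposal is correct and follows the same route as the paper: establish that the set of possible transition matrices is finite (bounded size via $\xi(\Sigma)$, bounded entries via the degree-$d$ accounting), apply the standard fact that $\rho(A)<1$ forces $\|A^k\|\to 0$, and take the maximum of the finitely many resulting exponents. You fill in the finiteness justification more explicitly than the paper does, but the argument is identical in substance.
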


Before giving the proof of Lemma~\ref{lem:m}, we remark that for a matrix $A$, the condition that $\rho(A)<1$ does not in general put any upper bound on $\|A\|$.

\begin{proof}[Proof of Lemma~\ref{lem:m}]

It follows from Jordan canonical form that if $\rho(A)<1$ then $\|A^n\|\to 0$ as $n\to \infty$.  In particular, there exists an $N_A$ such that $\|A^n\|<1/2$ for all $n\geq N_A$.

For a given degree $d$ and a given $\Sigma$ there are only finitely many possible transition matrices, and in particular finitely many for which $\rho(A)<1$. Taking the maximum of all corresponding $N_A$ yields the desired exponent~$p$.
\end{proof}

\p{The transition matrix versus the pullback map}  Let $f : \Sigma \to \Sigma$ be a dynamical branched cover.  If $\Gamma$ is an $f$-stable multicurve, there is an associated transition matrix $M$.   The $ij$-th entry is
\[
m_{ij} = \sum_\delta \frac{1}{\deg f|\delta}
\]
where $\delta$ is a component of $f^{-1}(\gamma_j)$ homotopic in $\Sigma$ to $\gamma_i$.  Here, $\deg f|\delta$ is the degree of the map $f|\delta : \delta \to \gamma_j$, thought of as a map $S^1 \to S^1$.  

For an $f$-stable multicurve $\Gamma$, the next lemma bounds (under certain conditions) the effect of $\sigma_f$ on $\mu_X(\Gamma)$ in terms of the associated transition matrix.  This statement incorporates Theorem 7.1, Proposition 8.1(b), and Proposition 8.2 in Douady--Hubbard as well as part of their proof of Proposition 8.2.

For the proof we use the notion of a latitude in an annulus.  By definition, an annulus $A$ in a Riemann surface is a subset that is biholomorphic to a standard annulus $A_r$ given by $1 < |z| < r$.  A latitude in $A_r$ is any circle centered at 0, and a latitude in $A$ is any corresponding circle in $A$ (under a biholomorphism).  A biholomorphism of $A_r$ preserves latitudes, and so the latitudes in $A$ form a well-defined foliation of $A$.

\begin{lemma}
\label{lemma:fudge}
Fix $d \geq 2$ and $\Sigma$ a marked surface.  Let $b = (d|P|+1)(\epsilon+2)$.  If $f \colon \Sigma \to \Sigma$ is a dynamical branched cover of degree $d$ with stable multicurve $\Gamma$ and associated transition matrix $M$, and for some $X \in \Teich(\Sigma)$ the multicurve $\Gamma$ includes all simple closed curves $\gamma$ with $\mu_X(\gamma) > \epsilon$, then 
\[
\mu_{\sigma_f(X)}(\Gamma) \leq M \mu_X(\Gamma) + (b,\dots,b).
\]
\end{lemma}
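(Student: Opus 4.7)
The plan is to bound $\mu_{\sigma_f(X)}(\gamma_i)$ for each component $\gamma_i$ of $\Gamma$ individually, by comparing a nearly-maximal annulus around $\gamma_i$ in $\sigma_f(X)$ to the pullbacks of maximal annuli around the $\gamma_j$'s in $X$. First, I would fix for each $j$ an embedded annulus $A_j\subset X$ homotopic to $\gamma_j$ of modulus $\mu_X(\gamma_j)$, with the $A_j$'s pairwise disjoint. Because the critical values of $f$ lie in $P$ and hence avoid each $\gamma_j$, one may further arrange each $A_j$ to be disjoint from the critical values; then every component of $f^{-1}(A_j)$ is an unbranched annular cover of $A_j$, and the component surrounding a preimage $\delta$ of $\gamma_j$ is an annulus of modulus exactly $\mu_X(\gamma_j)/\deg(f|\delta)$. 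Collecting those components whose core is homotopic to $\gamma_i$ in $\sigma_f(X)$ yields a pairwise disjoint, concentric family $\mathcal{C}_i$ of sub-annuli around $\gamma_i$, whose total modulus equals $\sum_j m_{ij}\mu_X(\gamma_j)$.

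The core of the argument is to show that a nearly-maximal annulus $\tilde A \subset \sigma_f(X)$ around $\gamma_i$ containing $\bigsqcup \mathcal{C}_i$ cannot substantially exceed this total modulus. The essential sub-annuli of $\tilde A$ not covered by $\mathcal{C}_i$---the residual pieces---each project under $f$ to an annular region in $X \setminus \bigsqcup_j A_j$ wrapping $\gamma_{j_0}:=f(\gamma_i)$ but disjoint from the maximal annulus $A_{j_0}$. The hypothesis that $\Gamma$ contains every essential simple closed curve of $X$ with $\mu_X > \epsilon$ then forces the image of each residual piece to have modulus at most $\epsilon$ around $\gamma_{j_0}$; the modulus-degree inequality together with the $+1$-slack between $\mu$ and $\tilde{\mu}$ upgrades this to $\mu(G) \le \epsilon+2$ for each residual piece $G$ in $\sigma_f(X)$. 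The number of residual pieces is at most $d|P|+1$: two adjacent members of $\mathcal{C}_i$ can fail to be merged into a single larger lift of some $A_j$ only when the cross-section of $\tilde A$ between them hosts a distinct preimage marked point, and $|f^{-1}(P)| \le d|P|$, with one extra allowance for the two outer ends of $\tilde A$. Summing yields total residual modulus at most $(d|P|+1)(\epsilon+2)=b$.

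The main technical obstacle is the combinatorial-geometric analysis underlying the residual bound. In particular, one must justify that $\mu(\tilde A)$ decomposes---up to controlled error---as the sum of the moduli of the members of $\mathcal{C}_i$ together with the moduli of the residual pieces; while the Gr\"otzsch inequality immediately gives this as a lower bound on $\mu(\tilde A)$, the reverse direction that we need relies on $\tilde A$ being near-optimal, so that in the intrinsic flat metric of $\tilde A$ the cross-sections separating the pieces of the decomposition become essentially straight, turning Gr\"otzsch into an equality up to an error absorbed by $b$. This step, together with the marked-point hypothesis on $\Gamma$ (used to preclude an unrepresented thick curve of $X$ from inflating some residual piece), is what makes the constant $b=(d|P|+1)(\epsilon+2)$ precisely the right additive error: $d|P|+1$ for the count of residual pieces separated by marked-point cross-sections, and $\epsilon+2$ per piece from the $\epsilon$-bound on thin-part modulus together with the two $+1$-slacks from the modulus-degree and modulus-versus-covering-modulus inequalities.
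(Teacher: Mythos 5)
There is a genuine gap, and the paper's proof avoids it by running the argument in the opposite direction from the one you propose.

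Your plan is to pull back the extremal annuli $A_j\subset X$ through $f$, form the collection $\mathcal{C}_i$ of components homotopic to $\gamma_i$, and then bound $\mu_{\sigma_f(X)}(\gamma_i)$ by examining a near-maximal annulus $\tilde A$ ``containing $\bigsqcup\mathcal{C}_i$.'' Two things go wrong here. First, there is no reason an annulus $\tilde A$ that contains $\bigsqcup\mathcal{C}_i$ is anywhere near extremal in $\sigma_f(X)$: the extremal annulus for $\gamma_i$ is (up to slack) the Jenkins--Strebel cylinder, and the pulled-back annuli $\mathcal{C}_i$ can sit in a completely different part of the surface. Requiring $\tilde A\supset\bigsqcup\mathcal{C}_i$ may force $\mu(\tilde A)$ to be much smaller than $\mu_{\sigma_f(X)}(\gamma_i)$, in which case bounding $\mu(\tilde A)$ proves nothing about the quantity you actually need to control. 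Second, even granting that $\tilde A$ is near-maximal and contains $\bigsqcup\mathcal{C}_i$, you still need an upper bound of the form $\mu(\tilde A)\leq\sum_{\mathcal C_i}\mu + (\text{residual})$, and Gr\"otzsch only gives the inequality in the opposite direction. You flag this yourself as the ``main technical obstacle'' and gesture at near-straightness of cross-sections in the flat metric; that is not an argument, and in fact it is precisely the kind of reverse-superadditivity statement that is false without strong geometric hypotheses. These are not cosmetic issues: they are the whole content of the lemma.

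The paper's proof sidesteps both problems by never pulling annuli back at all. It starts with a near-extremal annulus $A$ in $\sigma_f(X)$ homotopic to $\gamma_i$, passes to the finer marked surface $\tilde X=(\sigma_f(X),f^{-1}(P))$, and cuts $A$ along the latitudes through the (at most $d|P|$) new marked points into $n\leq d|P|+1$ sub-annuli $A_1,\dots,A_n$. Cutting a conformal annulus along latitudes makes modulus \emph{exactly} additive, $\mu_{\sigma_f(X)}(A)=\sum\mu_{\tilde X}(A_k)$; this is the reverse-Gr\"otzsch step you were missing and it is free because latitudes decompose the standard annulus into concentric shells. The $A_k$ are then split into those with $\mu_{\tilde X}(A_k)\leq\epsilon+1$ (contributing at most $(d|P|+1)(\epsilon+1)$ in total) and those with $\mu_{\tilde X}(A_k)>\epsilon+1$. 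For the latter, a latitude $\alpha_k$ of $A_k$ has large covering modulus, so its $f$-image has $\mu_X>\epsilon$, hence by hypothesis lies in $\Gamma$, hence $\alpha_k$ is homotopic in $\tilde X$ to a unique $\delta_k\in\Delta_{\gamma_i}$; the modulus-degree inequality gives $\mu_{\tilde X}(A_k)\leq(\mu_X(\gamma)+1)/\deg(f|\delta_k)$, and since the $\alpha_k$ are pairwise non-isotopic in $\tilde X$ the $\delta_k$ are distinct, so these terms sum into $M\mu_X(\Gamma)+(d|P|+1)$. Adding the two contributions gives exactly $b=(d|P|+1)(\epsilon+2)$. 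If you want to salvage your writeup, the cleanest fix is to abandon the pullback-of-$A_j$ construction entirely and instead decompose the near-extremal annulus in $\sigma_f(X)$ at the lifted marked points, as the paper does.
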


\begin{proof}

The given inequality is a vector inequality, which must hold separately for each component. Specifically, for each curve $\gamma$ of $\Gamma$, we must prove that
\[
\mu_{\sigma_f(X)}(\gamma)\leq \sum_{\delta \in \Delta_\gamma} \frac{\mu_X(\gamma)}{\deg f|\delta}+ b
\]
where $\Delta_\gamma$ is the set of all components of $\Delta = f^{-1}(\Gamma)$ that are homotopic to~$\gamma$ in $\Sigma$. Let $A$ be an annulus in $\sigma_f(X)$ homotopic to $\gamma$.  It suffices to prove that
\[
\mu_{\sigma_f(X)}(A) \leq \sum_{\delta\in \Delta_\gamma} \frac{\mu_X(\gamma)}{\deg f|\delta}+ b.
\]
We now set about proving this inequality.  

Let $\tilde X$ be the marked Riemann surface obtained from $\sigma_f(X)$ by adding additional marked points: the set of marked points $\tilde P$ is the full $f$-preimage of the marked points in $X$.  We have $|\tilde P| \leq d|P|$, and hence the maximal number of parallel, disjoint curves in $\tilde X$ is bounded above by $d|P|+1$.  In particular, $|\Delta_\gamma| \leq d|P|+1$. 

Decompose $A$ into sub-annuli $A_1,\ldots,A_n$ by cutting it along all latitudes that pass through marked points  of $\tilde X$.  For each $i$, let $\alpha_i$ be a latitude of $A_i$; we have $\mu_{\tilde X}(\alpha_i)\geq \mu_{\tilde X}(A_i)$. The curves $\alpha_1,\ldots,\alpha_n$ are pairwise non-isotopic in $\tilde{X}$---this is obvious except for the bottom curve $\alpha_1$ and the top curve $\alpha_n$, but if $n\geq 2$ then $\tilde{P}$ and hence $P$ must be nonempty, in which case any point of $P$ separates $\alpha_1$ from $\alpha_n$. As in the last paragraph, it follows that $n \leq d|P|+1$. Since we decomposed $A$ along latitudes, we have
\[
\mu_{\sigma_f(X)}(A) = \sum_{i=1}^n \mu_{\tilde X}(A_i).
\]
Set
\[
\A^\leq = \{A_i \mid \mu_{\tilde X}(A_i) \leq \epsilon+1 \} \qquad \text{and} \qquad \A^> = \{A_i \mid \mu_{\tilde X}(A_i) > \epsilon+1 \}
\]
where $\epsilon$ is the Margulis constant. We will prove two claims that provide upper bounds on the sum of moduli in $\A^\leq$ and $\A^>$ in turn, beginning with $\A^\leq$.

We first claim that
\[
\sum_{\A^\leq} \mu_{\tilde X}(A_i) \leq (d|P|+1)(\epsilon+1).
\]
This follows from the fact that $n \leq (d|P|+1)$, and the definition of $\A^\leq$.

We next claim that
\[
\sum_{\A^>} \mu_{\tilde X}(A_i) \leq \sum_{\delta\in \Delta_\gamma} \frac{\mu_X(\gamma)}{\deg f|\delta} + (d|P|+1).
\]
Consider an $A_i\in\A^>$.  Since each point of $f^{-1}(P)\subseteq \tilde X$ is marked, the image of $\alpha_i$ under $f$ is a curve $\gamma_i$ in~$X$.  This curve satisfies
\[
\tilde{\mu}_X(\gamma_i) =  \tilde{\mu}_{\tilde X}(\alpha_i) \geq \mu_{\tilde X}(\alpha_i) \geq \mu_{\tilde X}(A_i)  > \epsilon+1,
\]
Here the first step uses the fact that the annular cover for $\gamma_i$ is the same as the annular cover for $\alpha_i$, the second step uses the fact that any annulus homotopic to $\alpha_i$ lifts to the annular cover, the third step uses the fact that $A_i$ is an annulus homotopic to $\alpha_i$, and the last step uses the definition of~$\A^>$.

Since $\tilde{\mu}_X(\gamma_i) > \epsilon+1> \epsilon$, we have that $\gamma_i$ is homotopic to a multiple of a simple closed curve for each $A_i\in\A^>$, and $\mu_X(\gamma_i)> \epsilon$ by the collar lemma.  By hypothesis, it follows that $\gamma_i$ is homotopic to a multiple of a component of $\Gamma$.  Then $\alpha_i$ must be homotopic to a multiple of some curve $\delta_i\in\Delta_\gamma$, and since $\alpha_i$ is simple it must be homotopic to~$\delta_i$.  By the modulus-degree inequality, we have
\[
\mu_{\tilde X}(A_i) \leq 
\mu_{\tilde X}(\alpha_i) = \mu_{\tilde X}(\delta_i) \leq \frac{\mu_X(\gamma)+1}{\deg f|\delta_i}.
\]
Since the curves $\alpha_i$ are pairwise non-isotopic in $\tilde{X}$, the $\delta_i$'s are all distinct, so
\[
\sum_{\A^>} \mu_{\tilde X}(A_i) \leq \sum_{\A^>} \frac{\mu_X(\gamma)+1}{\deg f|\delta_i} \leq \sum_{\delta\in\Delta_\gamma} \frac{\mu_X(\gamma)+1}{\deg f|\delta} \leq \sum_{\delta\in\Delta_\gamma} \frac{\mu_X(\gamma)}{\deg f|\delta} + (d|P|+1).
\]
The first inequality is as above, the second inequality comes from the fact that the $\delta_i$'s are all distinct, and the third comes from two facts, namely, that $1/(\deg f|\delta) \leq 1$ and that $|\Delta_\gamma| \leq d|P|+1$.  This completes the proof of the claim.

We may now complete the proof of the lemma.  We have
\begin{align*}
\mu_{\sigma_f(X)}(A) 
= \sum_{i=1}^n \mu_{\tilde X}(A_i) 
= \sum_{\A^\leq} \mu_{\tilde X}(A_i) + \sum_{\A^>} \mu_{\tilde X}(A_i) 
\leq \sum_{\delta\in \Delta_\gamma} \frac{\mu_X(\gamma)}{\deg f|\delta} + b
\end{align*}
The first equality was explained above.  The second equality is true since $\{A_i\}$ is equal to the disjoint union $\A^\leq \cup \A^>$.  The last inequality is the combination of the two claims and the definition of $b$. 
\end{proof}

\p{Mapping class groups and virtual endomorphisms} Let $\Sigma = (S,P)$.  The pure mapping class group $\PMod(\Sigma)$ is the group of homotopy classes of homeomorphisms of $\Sigma$, where homeomorphisms and homotopies are required to fix $P$ pointwise.  

Let $f : \Sigma \to \Sigma$ be a dynamical branched cover.  There is an associated virtual endomorphism
\[
\phi : \PMod(\Sigma) \dasharrow \PMod(\Sigma)
\]
defined by lifting (homotopy classes of) homeomorphisms through $f$.  It follows from the usual lifting criterion in algebraic topology and the fact that the degree of $f$ is finite that the domain of $\phi$ has finite index in $\PMod(\Sigma)$.  Since isotopies always lift through $f$, the map $\phi$ is well defined.

There is a natural action of $\PMod(\Sigma)$ on $\Teich(\Sigma)$ by pullback: given $h \in \PMod(\Sigma)$ and $X \in \Teich(\Sigma)$ we obtain $h \cdot X$ by pulling  back the complex structure given by a representative of $X$ through a representative of $h$.  It follows from the definitions that the pullback map $\sigma_f$ is intertwined with $\phi$.

\p{Mumford's compactness criterion} We refer to the quotient of $\Teich(\Sigma)$ by $\PMod(\Sigma)$ as moduli space (often moduli space refers to the quotient by a larger group, the full mapping class group). Mumford's compactness criterion states that if $X_i$ is a sequence in $\Teich(\Sigma)$ and if the images of the $X_i$ leave every compact set in moduli space then $\limsup \mu(X_i) \to \infty$.

\begin{proof}[Proof of the \"Ubertheorem: Non-exceptional cases]

As in the statement of the theorem, $f\colon \Sigma \to \Sigma$ is a dynamical branched cover where $\Sigma = (S,P)$.  Assume that $f$ is not exceptional.  Let $\phi\colon \PMod(\Sigma) \dasharrow \PMod(\Sigma)$ be the virtual endomorphism associated to $f$, and let $\sigma\colon \Teich(\Sigma) \to \Teich(\Sigma)$ denote the pullback map.  

It follows from Teichm\"uller's theorems that the space $\Teich(\Sigma)$ is uniquely geodesic and that all maximal geodesics are bi-infinite.  It is also known that the action of $\PMod(\Sigma)$  on $\Teich(\Sigma)$ is properly discontinuous \cite[Theorem 12.2]{primer}.  We already stated that $\sigma$ is intertwined with $\phi$.  By Proposition~\ref{prop:pullback}, the map $\sigma$ is non-expanding.  In other words, the collection
\[
(\Teich(\Sigma), \PMod(\Sigma), \phi, \sigma)
\]
is a (not-at-all synthetic) synthetic Nielsen--Thurston package.  

Following the Bers proof of the Nielsen--Thurston classification, we treat three cases in turn:
\begin{enumerate}
    \item $\tau_\sigma=0$ and is realized
    \item $\tau_\sigma$ is not realized
    \item $\tau_\sigma > 0$ and is realized
\end{enumerate}
We will show in the three cases that $f$ is holomorphic, strongly reducible, and pseudo-Anosov, respectively.  

\bigskip

\noindent \emph{Case 1.} In this case it follows from the definitions that $f$ preserves a complex structure on $\Sigma$, which implies that $f$ has a holomorphic representative.

\bigskip

\noindent \emph{Case 2, $d=1$.} Let $D = \tau_\sigma + 1$, and let $N$ be the resulting constant from Lemma~\ref{lem:InvariantMulticurves}.  Let $X_i$ be a sequence of points in $\Teich(\Sigma)$ with $\tau_\sigma(X_i) \to \tau_\sigma$.  By Proposition~\ref{prop:metricmumford}, the (images of the) $X_i$ leave every compact subset of moduli space.  By Mumford's compactness criterion, we may choose a $k$ so that $\mu(X_k) > N$.  In particular there is a simple closed curve $\gamma$ in $\Sigma$ with $\mu_{X_k}(\gamma) > N$.   By Lemma~\ref{lem:InvariantMulticurves}, the full preimage of $\gamma$ is a stable multicurve $\Gamma$.  This $\Gamma$ is a reduction system and hence a strong reduction system.

\bigskip

\noindent \emph{Case 2, $d>1$.}  By Proposition~\ref{prop:pullback}, some iterate of $\sigma$ is weakly contracting.  Applying Proposition~\ref{prop:metricorbit} to this iterate, we conclude that (the image of) every orbit leaves every compact subset of moduli space.  Fix one such orbit $Y_i$.  Again by Mumford's compactness criterion the $\mu(Y_i)$ tend to infinity.  

We now introduce several constants.  Let $p = p(\Sigma,d)$ the the constant obtained from Lemma~\ref{lem:m}.  Since $\sigma$ is non-expanding, there exists a $D>0$ so that $\tau_\sigma(Y_i) \leq D$ for all~$i$, namely, $D=\tau_\sigma(Y_0)$. For this $D$, let $N=N(\Sigma,d,D)$ be the constant from Lemma~\ref{lem:InvariantMulticurves}.  

Next, let $b=b(\Sigma,d)$ be the constant from Lemma~\ref{lemma:fudge}, and let
\[
r = \max_M \big\| M^{p-1} + \cdots + M\big\|   \bigl\|(b,\ldots,b)\big\|
\]
where the maximum is taken over all transition matrices $M$ for dynamical branched covers of degree $d$ over $\Sigma$ (there are finitely many such matrices).  Finally, let
\[
C = \max \{ N, 2r , \epsilon \}.
\]
Since $\limsup \mu(Y_i) = \infty$, there exists a smallest $n$ with $\mu(Y_n)>C$.  Increasing $C$ if necessary, we may assume that $n\geq p$.  Let $\gamma$ be a simple closed curve in $\Sigma$ so that $\mu_{Y_n}(\gamma) > C$.  Then $\mu_{Y_n}(\gamma)> N$, so  Lemma~\ref{lem:InvariantMulticurves} tells us that the full $f$-preimage $\Gamma$ of $\gamma$ is an $f$-stable multicurve.

Suppose for the sake of contradiction that $\Gamma$ is not the multicurve underlying some strong reduction system for $f$, that is, the transition matrix $M$ for $\Gamma$ has $\rho(M)<1$.  By Lemma~\ref{lem:m} we have $\|M^p\| \leq 1/2$.  We thus have
\begin{align*}
\mu_{Y_n}(\gamma) \leq \| \mu_{Y_n}(\Gamma) \| &\leq \bigl\| M^p\, \mu_{Y_{n-m}}(\Gamma) + (M^{p-1} + \cdots + M)(b,\ldots,b)
\bigr\|  \\ 
 &\leq \|M^p\|\,\|\mu_{Y_{n-p}}(\Gamma) \| +     \big\| M^{p-1} + \cdots + M\big\|   \bigl\|(b,\ldots,b)
\bigr\|  \\ 
&< \frac{1}{2}C + r \leq \frac{1}{2}C + \frac{1}{2}C = C.
\end{align*}
In order, we used the definition of the sup norm, Lemma~\ref{lemma:fudge} (iteratively), the triangle inequality and the definition of the operator norm, Lemma~\ref{lem:m} and the choices of $n$ and $r$, the choice of $C$, and basic algebra.  The resulting inequality $\mu_{Y_n}(\gamma) \leq C$ contradicts the earlier assumption that $\mu_{Y_n}(\gamma)> C$, and we are done.

\bigskip

\noindent \emph{Case 3.} Let $X \in \Teich(\Sigma)$ be a point with $\tau_\sigma(X)=\tau_\sigma$. Let $\gamma$ be the unique geodesic ray passing through $X$ and $\sigma(X)$.  Since $\tau_\sigma>0$ by assumption, Proposition~\ref{prop:metricray} implies the restriction of $\sigma$ to $\gamma$ is forward translation by~$\tau_\sigma$.    In particular, $\sigma^2(X)$ lies on $\gamma$ and $d(X,\sigma^2(X))$ is twice $d(X,\sigma(X))$.

The ray $\gamma$ is determined by an ordered pair of measured foliations $(\F^+,\F^-)$ on $\Sigma$, each well defined up to scaling and isotopy.  The Teichm\"uller map $h : X \to \sigma(X)$ has $(\F^+,\F^-)$ as its associated foliations.  

As in Section~\ref{sec:pullback} there is a commutative diagram 
\[
 \begin{tikzcd}
  (\sigma(X),f^*(\F^+,\F^-)) \arrow[r,"h^f"] \arrow[d,"f"] & (\sigma^2(Y),f^*(\lambda\,\F^+,\tfrac{1}{\lambda}\,\F^-)) \arrow[d,"f"] \\
 (X,(\F^+,\F^-)) \arrow[r,"h"]  & (\sigma(X),(\lambda\,\F^+,\tfrac{1}{\lambda}\,\F^-)),
 \end{tikzcd}
\]
where $h^f$ is a pseudo-Teichm\"uller map with the same dilatation as $h$ and where $\lambda = e^{\tau_\sigma}$.  Since $d(X,\sigma(X))=d(\sigma(X),\sigma^2(X))$, it follows that $h^f$ is in fact a Teichm\"uller map.  

We claim that the top-left and bottom-right corners of the diagram are scalar multiples.  More precisely, we claim
\[
f^*(\F^+,\F^-) =  ((\sqrt{d}\lambda)\,\F^+,(\sqrt{d}/\lambda)\,\F^-),
\]
where $d=\deg(f)$.  This claim gives that $f$ is pseudo-Anosov, and so it remains to prove the claim.  (One is tempted to worry about the fact that $X \neq \sigma(X)$, but if we forget the complex structures, we can replace both $X$ and $\sigma(X)$ in the claim with $\Sigma$, making it clear how the claim implies that $f$ is pseudo-Anosov.)

Firstly, the underlying (unmeasured) foliations must be equal, for if not, the composition $h^f \circ h$ would have dilatation less than $\lambda^2$ and hence $d(X,\sigma^2(X))$ would be strictly less than $2d(X,\sigma(X))$, a contradiction.  As for the measures, the Euclidean areas of the pairs of foliations on the bottom row are equal, and pulling back by $f$ multiplies area by $d$, and so the claim follows.    

\bigskip

\noindent \emph{Exclusivity.} We now prove the exclusivity statement in the non-exceptional case.  As discussed in the introduction---and proved in the appendix---a strong reduction system is an obstruction to holomorphicity when $d > 1$.  This implies that cases 1 and 2 are exclusive when $d>1$.  We would now like to show that cases 2 and 3 are exclusive.  To this end, we first point out that in the above argument for Case 3, Proposition~\ref{prop:metricray} further implies that $\sigma$ is not weakly contracting.  Since we are in the non-exceptional case, Proposition~\ref{prop:pullback} then implies $\deg f = 1$, that is, $f$ is an element of the mapping class group of $\Sigma$.  Therefore, the exclusivity of Cases 2 and 3 follows as in the Nielsen--Thurston classification theorem (a pseudo-Anosov mapping class stretches the lengths of all curves exponentially, but a reducible mapping class does not \cite[Theorem 14.23]{primer}).  

\bigskip

\noindent \emph{Uniqueness.}  Finally, we prove the uniqueness statements of the theorem.  If $f$ is non-exceptional with $\deg f > 1$ then it follows from Proposition~\ref{prop:pullback} that $\sigma_f$ has an iterate that is weakly contracting.  In particular, $\sigma_f$ has at most one fixed point, and so there is at most one complex structure for which $f$ is holomorphic.  The other uniqueness statement is the same as in the case of mapping class groups, since (as above) all non-exceptional pseudo-Anosov maps have degree 1.  See \cite[Corollary 12.4]{FLP} for the argument.  The idea is that, under iteration, a pseudo-Anosov map acts with source-sink dynamics on the space of projective measured foliations.  %
\end{proof}

We record here two statements that were established in the course of the proof of the \"Ubertheorem in the non-exceptional cases.  These statements will be applied in the proof for the exceptional cases.

\begin{proposition}\label{case3}
Let $f : \Sigma \to \Sigma$ be a dynamical branched cover.  Suppose that the pullback map $\sigma_f$ has an orbit whose image in moduli space leaves every compact set.  Then $f$ is strongly reducible.
\end{proposition}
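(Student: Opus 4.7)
The plan is to mirror Case 2 of the Übertheorem proof above, bypassing Propositions~\ref{prop:metricmumford} and~\ref{prop:metricorbit} since the escape of an orbit to infinity in moduli space is now a hypothesis rather than a conclusion. Let $(Y_i)$ be a $\sigma_f$-orbit whose image in moduli space leaves every compact subset. By Mumford's compactness criterion, $\limsup_i \mu(Y_i) = \infty$. Since $\sigma_f$ is non-expanding by Proposition~\ref{nonexp}, we have $\tau_{\sigma_f}(Y_i) \leq D := \tau_{\sigma_f}(Y_0)$ for every $i$, so Lemma~\ref{lem:InvariantMulticurves} furnishes a constant $N$ such that whenever some $\gamma$ has $\mu_{Y_i}(\gamma) > N$, its full $f$-preimage is an $f$-stable multicurve.

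If $\deg f = 1$, then $f$ is a homeomorphism, and any $f$-stable multicurve $\Gamma$ is permuted by $f$; its transition matrix is a permutation matrix, so the all-ones labeling exhibits $\Gamma$ as a strong reduction system. Picking any $Y_n$ with $\mu(Y_n) > N$ and a curve $\gamma$ realizing this modulus, the full preimage of $\gamma$ is the required strong reduction system.

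If $\deg f = d > 1$, introduce $p = p(\Sigma,d)$ from Lemma~\ref{lem:m} and $b = b(\Sigma,d)$ from Lemma~\ref{lemma:fudge}; set
\[
r = \max_M \big\|M^{p-1} + \cdots + M\big\| \cdot \big\|(b,\ldots,b)\big\|,
\]
with the maximum taken over the finitely many transition matrices for degree-$d$ stable multicurves on $\Sigma$, and set $C = \max\{N, 2r, \epsilon\}$. Choose $n \geq p$ with $\mu(Y_n) > C$ (possible since $\limsup \mu(Y_i) = \infty$) and a simple closed curve $\gamma$ with $\mu_{Y_n}(\gamma) > C \geq N$. By Lemma~\ref{lem:InvariantMulticurves} the full $f$-preimage $\Gamma$ of $\gamma$ is $f$-stable, with transition matrix $M$. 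If $\Gamma$ were not the underlying multicurve of a strong reduction system, then no positive vector would satisfy $M\vec v \geq \vec v$, forcing $\rho(M) < 1$. Lemma~\ref{lem:m} then gives $\|M^p\| < 1/2$, and iterating Lemma~\ref{lemma:fudge} $p$ times (exactly as in Case 2 of the Übertheorem proof) gives
\[
\mu_{Y_n}(\gamma) \leq \|M^p\| \cdot \|\mu_{Y_{n-p}}(\Gamma)\| + r < \tfrac{1}{2} C + \tfrac{1}{2} C = C,
\]
contradicting $\mu_{Y_n}(\gamma) > C$. Hence $\Gamma$ is a strong reduction system.

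There is no genuinely new obstacle here beyond what has already been overcome in Case 2 of the Übertheorem: the main point is simply to observe that the hypothesis of an orbit escaping to infinity is exactly the output of Propositions~\ref{prop:metricmumford} and~\ref{prop:metricorbit} in that case, so the remaining modulus-and-transition-matrix machinery applies verbatim. The only small item to verify separately is the $d=1$ case, where the "$\rho(M)<1$ hence stable implies strong" step is trivial because the transition matrix is a permutation matrix.
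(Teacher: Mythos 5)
Your approach is essentially the same as the paper's: Proposition~\ref{case3} is explicitly recorded as a statement ``established in the course of the proof of the \"Ubertheorem in the non-exceptional cases,'' and you have correctly identified that the machinery of Propositions~\ref{prop:metricmumford} and~\ref{prop:metricorbit} is short-circuited once the escape of an orbit to infinity is assumed rather than derived, and that the remaining argument (Mumford, Lemma~\ref{lem:InvariantMulticurves}, Lemmas~\ref{lem:m} and~\ref{lemma:fudge}, and the contradiction) goes through without the non-exceptionality hypothesis.

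There is, however, one small but real gap in your choice of $n$. You write ``Choose $n \geq p$ with $\mu(Y_n) > C$,'' but the inequality chain
\[
\mu_{Y_n}(\gamma) \leq \|M^p\| \cdot \|\mu_{Y_{n-p}}(\Gamma)\| + r < \tfrac{1}{2} C + \tfrac{1}{2} C = C
\]
requires $\|\mu_{Y_{n-p}}(\Gamma)\| \leq C$, and an arbitrary $n$ with $\mu(Y_n) > C$ provides no control on $\mu(Y_{n-p})$. The paper takes $n$ to be the \emph{smallest} index with $\mu(Y_n) > C$ (enlarging $C$ if necessary so that this minimal $n$ is at least $p$), which guarantees $\mu(Y_{n-p}) \leq C$ and hence $\|\mu_{Y_{n-p}}(\Gamma)\| \leq C$. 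With that fix, your proof matches the paper's. Your $d=1$ observation is fine: for a homeomorphism a stable multicurve is automatically invariant (preimage and image have the same number of components), the transition matrix is a permutation matrix, and the all-ones labeling makes it a strong reduction system.
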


\begin{proposition}\label{case2}
Let $f : \Sigma \to \Sigma$ be a dynamical branched cover.  Suppose that the pullback map $\sigma_f$ preserves a geodesic ray in $\Teich(\Sigma)$ and acts by forward translation on that ray.  Then $f$ is pseudo-Anosov.
\end{proposition}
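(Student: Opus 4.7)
The plan is to follow essentially the Case~3 argument from the non-exceptional case of the \"Ubertheorem: invoke Teichm\"uller's theorems to identify the preserved geodesic ray with a pair of transverse measured foliations, then use the commutative diagram relating the Teichm\"uller map $h$ and its $f$-lift $h^f$ to compute how $f$ pulls back those foliations.

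Let $\tau > 0$ be the distance by which $\sigma_f$ translates along the preserved ray $\gamma$, set $\lambda = e^\tau$, let $X$ be the starting point of $\gamma$, and let $(\F^+,\F^-)$ be the pair of transverse measured foliations on $\Sigma$ that, by Teichm\"uller's existence theorem, determines $\gamma$; normalize so that $(\F^+,\F^-)$ induces $X$.  The point of $\gamma$ at arclength $t$ is then induced by $(e^t\,\F^+, e^{-t}\,\F^-)$, and since $\sigma_f$ is forward translation by $\tau$, the points $X, \sigma_f(X), \sigma_f^2(X)$ appear consecutively on $\gamma$ at arclengths $0, \tau, 2\tau$.  In particular, the Teichm\"uller map $h : X \to \sigma_f(X)$ has associated pair $(\F^+,\F^-)$ and dilatation $\lambda^2$.

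Next, I form the commutative diagram of Section~\ref{sec:pullback} relating $h$ to its lift $h^f : \sigma_f(X) \to \sigma_f^2(X)$, which is a priori a pseudo-Teichm\"uller map with dilatation $\lambda^2$ and associated (possibly pseudo-) foliations $(f^*\F^+, f^*\F^-)$ at its source. Because $d(\sigma_f(X),\sigma_f^2(X)) = \tau$ and $h^f$ realizes this distance with the right dilatation, Teichm\"uller's uniqueness theorem forces $h^f$ to coincide with the genuine Teichm\"uller map along $\gamma$ from $\sigma_f(X)$ to $\sigma_f^2(X)$, whose associated pair at $\sigma_f(X)$ is $(\lambda\,\F^+, \lambda^{-1}\,\F^-)$.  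Modulo the inherent $(G^+,G^-) \sim (s G^+, s G^-)$ ambiguity in the measured pair associated to a Teichm\"uller map, this yields $f^*\F^+ = s\lambda\,\F^+$ and $f^*\F^- = (s/\lambda)\,\F^-$ for some $s > 0$; in particular the underlying (unmeasured) foliations are preserved by~$f^*$.

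Finally, I pin down $s$ by a total-area computation. For any $a,b > 0$, the Euclidean area form of $(a\F^+, b\F^-)$ on $\Sigma$ is $ab$ times that of $(\F^+, \F^-)$, while pulling back the area form along a degree-$d$ branched cover multiplies total area by~$d$. Comparing, we get $s^2 = d$ and hence $s = \sqrt{d}$, giving
\[
f^*(\F^+, \F^-) = \bigl(\sqrt{d}\,\lambda\,\F^+,\ (\sqrt{d}/\lambda)\,\F^-\bigr),
\]
with $\lambda > 1$---the defining equation of a pseudo-Anosov dynamical branched cover. The singularity conditions (1-pronged singularities only at points of $P$) come for free from the foliations produced by Teichm\"uller's existence theorem. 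The main obstacle is the careful invocation of Teichm\"uller uniqueness to upgrade $h^f$ from pseudo-Teichm\"uller to a genuine Teichm\"uller map and to extract its measured foliations up to the $s$-ambiguity; once that is in place, the area identity closes the argument.
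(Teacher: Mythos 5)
Your proof is correct and follows essentially the same route as the paper's Case 3 argument: identify the ray with a measured foliation pair, invoke Teichm\"uller uniqueness to promote the lift $h^f$ from pseudo-Teichm\"uller to Teichm\"uller, deduce that $f^*(\F^+,\F^-)$ agrees with $(\lambda\,\F^+,\lambda^{-1}\,\F^-)$ up to a global positive scalar, and fix that scalar by comparing Euclidean areas under a degree-$d$ pullback. The only cosmetic difference is that you pin down the foliation pair up to scale directly via uniqueness of the quadratic differential associated to a Teichm\"uller map, whereas the paper first shows the underlying (unmeasured) foliations agree by a dilatation argument on $h^f\circ h$; these are equivalent ways of packaging the same fact.
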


Even though our proof of Case 3 in the non-exceptional case reduces to the case of $\deg f = 1$, we gave the argument for arbitrary degree precisely so that we could give Proposition~\ref{case2}.


\section{Proof of the \"Ubertheorem: Exceptional cases}
\label{sec:torus}

In this section we prove the \"Ubertheorem in the remaining cases, the exceptional cases.  As above, these are the cases where $\deg f > 1$ and $f$ either a torus map or a sphere map obtained from a torus map through the hyperelliptic involution.  

The proof uses many of the tools developed in  Section~\ref{sec:proof}.  The main obstacle is that  Proposition~\ref{prop:stable} gives no information in the exceptional cases, and hence Proposition~\ref{prop:pullback} does not hold (as we will see, there are indeed cases where the pullback map has no iterate which is a weak contraction, namely, the cases of affine exceptional maps).  We will instead take advantage of a product structure on Teichm\"uller space that is special to the exceptional cases.  (In the case of an unmarked exceptional map, the product structure is trivial, and so these cases could be equally well have been addressed in Section~\ref{sec:proof}.)

The paper by Douady--Hubbard gives a detailed account of the dynamical branched covers with Euclidean orbifold, including a catalogue of all such maps \cite[Section 9]{DH}.  

\p{Exceptional surfaces and maps} In order to give proofs that work simultaneously for the torus and the sphere, we will slightly alter our notation for a marked surface.  Specifically, in this section, a marked surface $\Sigma$ is a pair $(S,P)$ where $S = (S_0,P_0)$ itself is a surface with marked points in the usual sense (so $S_0$ is a closed surface) and $P \subseteq S_0 \setminus P_0$. The relevant marked surfaces $\Sigma$ for this section are $((T^2,\emptyset),P)$ and $((S^2,P_0),P)$ with $|P_0|=4$.

When we say that a dynamical branched cover $f : \Sigma \to \Sigma$ is exceptional, we will take $\Sigma$ to be $(S,P)$ where $S = (S_0,P_0)$ as above and $P_0$ is the post-critical set of $f$.  So in all cases $P$ is the set of marked points that are not post-critical.  

\p{Teichm\"uller maps in the exceptional cases} For the torus $T^2$, a Teichm\"uller map is the same thing as an orientation-preserving affine homeomorphism.  This follows from the same reasoning as in the resolution of Gr\"otzsch's problem about extremal maps between rectangles \cite[Theorem 11.10]{primer}.  As a consequence, we see that Teichm\"uller maps on $T^2$ are closed under composition.  

We can identify $\Teich(T^2)$ with $\Teich(S_{1,1})$, the Teichm\"uller space of the torus with one marked point (this is the space of complex structures on the torus, modulo pullback by diffeomorphisms that fix the marked point and are homotopic to the identity).  For the latter, the Teichm\"uller maps are exactly the orientation-preserving linear homeomorphisms and they are thus unique.  In what follows, when we refer to \emph{the} Teichm\"uller map between two points of $\Teich(T^2)$, we mean the linear one (here we are abusing the identification of $\Teich(T^2)$ with $\Teich(S_{1,1})$).

Every point in $\Teich(T^2)$ comes equipped with a holomorphic hyperelliptic involution.  The quotient is a Riemann surface that may be regarded as a sphere with four marked points.  Each marked point corresponds to a fixed point of the hyperelliptic involution, also called a Weierstrass point.  This correspondence gives a homeomorphic identification of $\Teich(T^2)$ with $\Teich(S_{0,4})$, the Teichm\"uller space of a sphere with four marked points.

The Teichm\"uller maps for $S_{0,4}$ are exactly the quotients under the hyperelliptic involution of the affine maps of $T^2$ preserving the set of four Weierstrass points.  By the same token, the above correspondence of $\Teich(T^2)$ with $\Teich(S_{0,4})$ is an isometry.  

\p{A product decomposition on Teichm\"uller space} Let $\Sigma = (S,P)$ be an exceptional marked surface.  Again, either $S$ is $(T^2,\emptyset)$, or it is $S=(S^2,P_0)$ with $|P_0|=4$, and in either case $P \cap P_0 = \emptyset$.  There is a forgetful map
\[
\pi_v : \Teich(\Sigma) \to \Teich(S)
\]
obtained by forgetting the set of marked points $P$.  Let $X_\square \in \Teich(S)$ be some basepoint for $\Teich(S)$ (for instance when $S = T^2$, we may take $X_\square$ to be the unit square torus where the generators for $\pi_1(T^2)$ have length 1).  We denote $\pi_v^{-1}(X_\square)$ by $\Teich(X_\square,P)$.

Having defined $\Teich(X_\square,P)$ we may define a map
\[
\nu : \Teich(S)\times \Teich(X_\square,|P|) \to \Teich(\Sigma).
\]
The formula for $\nu$ is
\[
\nu(X,Y) = (h_X)_*(Y)
\]
where $h_X : X_\square \to X$ is the Teichm\"uller map and  $(h_X)_*$ is the push forward of the complex structure $Y$. 
 The marked points in $\nu(X,Y)$ are defined to be the $h_X$-images of the marked points in $Y$.  
 
In what follows we will refer to a subset $\Teich(S)\times \{Y\}$ of $\Teich(S)\times \Teich(X_\square,|P|) \to \Teich(\Sigma)$ as a horizontal slice, and we will write it as $\Teich(S)\times Y$ for simplicity.  We have a similar definition and notation for vertical slices.

\begin{proposition}
\label{prop:torus}
Let $\Sigma = (S,P)$ be an exceptional marked surface and fix some $X_\square \in \Teich(S)$.
\begin{enumerate}[label={(\arabic*)},ref={\theproposition(\arabic*)}]
\item\label{torus:product} The map
\[
\nu : \Teich(S)\times \Teich(X_\square,|P|) \to \Teich(\Sigma)
\]
is a homeomorphism.
\item\label{horisom} The map $\nu$ restricts to an isometry on each horizontal slice $\Teich(S)\times Y$.
\item\label{torus:hor} Two points $Z_1,Z_2 \in \Teich(\Sigma)$ lie in the $\nu$-image of a slice $\Teich(S)\times Y$ if and only if the Teichm\"uller map between them has no 1-pronged singularities at points of $P$.
\item\label{verticalproj} The projection $\pi_v : \Teich(\Sigma) \to \Teich(S)$ is non-expanding.  Further $d(\pi_v(Z_1),\pi_v(Z_2)) = d(Z_1,Z_2)$ if and only if $Z_1$ and $Z_2$ lie in the same horizontal slice $\Teich(S) \times Y$.
\end{enumerate}
\end{proposition}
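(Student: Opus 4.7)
The plan exploits the rigidity of Teichm\"uller maps in $\Teich(S)$ in the exceptional cases. In the torus case $S = (T^2,\emptyset)$, every Teichm\"uller map is linear and its associated foliations have no singularities; in the sphere case $S = (S^2,P_0)$ with $|P_0|=4$, it is the hyperelliptic quotient of a linear map, and its foliations have 1-pronged singularities only at $P_0$. In either case, since $P$ is disjoint from $P_0$, the foliations of a Teichm\"uller map in $\Teich(S)$ have no 1-pronged singularities at $P$. This observation, together with the uniqueness of Teichm\"uller maps, drives all four parts.

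For (1), I would exhibit the inverse of $\nu$ explicitly as $Z \mapsto (\pi_v(Z),\, h_{\pi_v(Z)}^*(Z))$, where $h_X : X_\square \to X$ is the Teichm\"uller map in $\Teich(S)$ regarded as a homeomorphism and $h_X^*$ is pullback. That this is a two-sided inverse to $\nu$ rests on the identity $h \circ h_{X_1} = h_{X_2}$ whenever $h : X_1 \to X_2$ is the Teichm\"uller map in $\Teich(S)$: the left-hand side is linear (or a hyperelliptic quotient thereof) in the correct isotopy class, so uniqueness forces the equality. Continuity in both directions follows from continuous dependence of the Teichm\"uller map on its endpoints.

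For (2), the same identity gives $h_*\nu(X_1,Y) = h_*(h_{X_1})_*(Y) = (h_{X_2})_*(Y) = \nu(X_2,Y)$, so $h$ itself exhibits a $K$-quasiconformal map $\nu(X_1,Y) \to \nu(X_2,Y)$ of dilatation $K = e^{2 d_{\Teich(S)}(X_1,X_2)}$ in the correct isotopy class, proving one inequality. The reverse inequality (and simultaneously the non-expanding assertion in (4)) is immediate because forgetting $P$ turns any $K$-quasiconformal map between points of $\Teich(\Sigma)$ into one of the same dilatation between their $\pi_v$-projections.

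The forward direction of (3) follows from the proof of (2): the Teichm\"uller map between points of a horizontal slice is the $h$ above, whose foliations have no 1-pronged singularities at $P$. For the converse, if the Teichm\"uller map $h : Z_1 \to Z_2$ has no 1-pronged singularities at $P$, then forgetting $P$ preserves its extremality, so stripped of its $P$-labels $h$ becomes the Teichm\"uller map in $\Teich(S)$ between $\pi_v(Z_1)$ and $\pi_v(Z_2)$; chasing marked points through the inverse from (1) then shows that $Z_1$ and $Z_2$ have the same second coordinate. The equality case of (4) follows similarly: under equality, the $\Teich(\Sigma)$-Teichm\"uller map has the same dilatation as the $\Teich(S)$-Teichm\"uller map, so it descends to the latter on forgetting $P$, hence has no 1-pronged singularities at $P$, whence (3) applies. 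The main obstacle throughout is the bookkeeping of marked points in (1) and the converse of (3); the conceptual content reduces to the singularity observation at the outset.
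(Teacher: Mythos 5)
Your proposal is correct and follows essentially the same route as the paper: the explicit inverse $Z \mapsto (\pi_v(Z),\,h_{\pi_v(Z)}^*(Z))$ for part~(1), the composition identity $h\circ h_{X_1}=h_{X_2}$ (equivalently, closure of Teichm\"uller maps on exceptional surfaces under composition) as the engine of parts~(2)--(3), and the pseudo-Teichm\"uller-map observation under forgetting $P$ for part~(4). One small misattribution: part~(1) does not actually rest on $h\circ h_{X_1}=h_{X_2}$; the two-sided inverse identities follow directly from $(h_X)_*$ and $h_X^*$ being mutually inverse (together with $\pi_v(\nu(X,Y))=X$), while the composition identity is first genuinely needed only in parts~(2) and~(3).
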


\begin{proof}

We begin with the first statement.  To prove it, we define an inverse map to $\nu$.  The inverse has two coordinate functions.  The first is the projection map $\pi_v$.  The second coordinate function is: 
\[
\rho(Z) = h_{X}^*(Z)
\]
where $X=\pi_v(Z)$ and $h_{X}^*$ is pullback by the Teichm\"uller map $h_{X} : X_\square \to X$.  The maps $\nu$, $\pi_v$, and $\rho$ are well defined and continuous by Teichm\"uller's theorems.  The maps $\nu$ and $\pi_v \times \rho$ are inverses of each other by definition, and so both are homeomorphisms, proving the first statement.

We proceed to the second statement.  Let $(X_1,Y)$ and $(X_2,Y)$ be two points of $\Teich(S)\times \Teich(X_\square,|P|)$, and let $Z_1$ and $Z_2$ be their $\nu$-images.  Let $h : X_1 \to X_2$ be the Teichm\"uller map.  Since $\nu$ is defined in terms of Teichm\"uller maps from $X_\square$ and since Teichm\"uller maps of exceptional surfaces are closed under composition, it follows that $h$ may be regarded as the Teichm\"uller map $Z_1 \to Z_2$.  Since we have Teichm\"uller maps $X_1 \to X_2$ and $Z_1 \to Z_2$ with the same stretch factor (in fact it is the same underlying map), the second statement follows.

The third statement follows from the previous paragraph.  Indeed, if two points lie in the $\nu$-image of a horizontal slice, then we have from the previous paragraph a Teichm\"uller map with the desired properties.  For the other direction, suppose $h: Z_1 \to Z_2$ is a Teichm\"uller map where $Z_i = \nu(X_i,Y_i)$ and suppose $h$ has no singularities at the points of $P$.  We would like to show $Y_1 = Y_2$.  We may regard $h$ as a Teichm\"uller map $X_1 \to X_2$.  If $h_i : X_\square \to X_i$ is the Teichm\"uller map for each $i$ then $h \circ h_1 = h_2$.  Since $Y_i = h_i^*(Z_i)$, we have
\[
Y_2 = h_2^*(Z_2) = (h \circ h_1)^*(Z_2) = h_1^* h^*(Z_2) = h_1^*(Z_1) = Y_1,
\]
We now prove the fourth statement.  The projection $\pi_v$ is non-expanding because a Teichm\"uller map $h : Z_1 \to Z_2$ induces a pseudo-Teichm\"uller map $\bar h : \pi_v(Z_1) \to \pi_v(Z_2)$, as in Section~\ref{sec:pullback}.  The pseudo-Teichm\"uller map $\bar h$ is a Teichm\"uller map if and only if $h$ has no singularities at a point of $P$.  The fourth statement now follows from the third.
\end{proof}

Since Teichm\"uller maps between points in a horizontal slice are affine, the space $\Teich(X_\square,P)$---or indeed any of the vertical slices in the product decomposition in Proposition~\ref{prop:torus}---can be identified with the space of affine structures on $\Sigma$.  

\p{Pullback and the product decomposition} Given the product decomposition from Proposition~\ref{prop:torus}, our next goal is to elaborate on the interaction between the product structure and the pullback map.  The statement of the following proposition uses the following observation: an exceptional dynamical branched cover $f : (S,P) \to (S,P)$ induces a dynamical branched cover $\bar f : S \to S$.  In particular, there is an induced pullback map on $\Teich(S)$.

\begin{proposition}
\label{prop:toruspullback}
Let $\Sigma = (S,P)$ and let $f: \Sigma \to \Sigma$ be an exceptional dynamical branched cover of degree $d$.  
\begin{enumerate}[label={(\arabic*)},ref={\theproposition(\arabic*)}]
     \item\label{preserveproduct} The pullback map $\sigma_f$ preserves the product structure on $\Teich(\Sigma)$. 
    \item\label{horisom} If $\sigma_f$ preserves a horizontal slice $H$ of $\Teich(\Sigma)$ then $f$ is affine, $\sigma_f|H$ is an isometry, and $\sigma_f|H$ is conjugate under $\pi_v|H$ to the induced pullback map $\sigma_f^{hor}$ on $\Teich(S)$.
    \item\label{noslice} If $\sigma_f$ preserves no horizontal slice of $\Teich(\Sigma)$, then all $\sigma_f$-orbits are weakly contracting.
\end{enumerate}
\end{proposition}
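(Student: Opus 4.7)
The plan is to establish the three parts in order.

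For Part (1), I would check the vertical direction first: the relation $\pi_v \circ \sigma_f = \sigma_{\bar f} \circ \pi_v$ holds because forgetting the extra marked points of $P$ commutes with pullback, and this already gives that vertical slices map to vertical slices. For the horizontal direction, let $Z_1$ and $Z_2$ lie in a common horizontal slice, so by Proposition~\ref{prop:torus}(3) the Teichm\"uller map $h\colon Z_1 \to Z_2$ has no 1-pronged singularities at points of $P$. Its lift $h^f\colon \sigma_f(Z_1)\to\sigma_f(Z_2)$ has foliations obtained by pulling back those of $h$. In the exceptional case the 1-pronged singularities of $h$ lie in $P_0$, and the only local-degree-1 preimages of $P_0$ under $f$ are the points of $P_0$ themselves (the remaining preimages being critical points of $f$, which have local degree at least $2$). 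So $h^f$ has 1-pronged singularities only in $P_0$, whence by Proposition~\ref{prop:torus}(3) its endpoints lie in a common horizontal slice.

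For Part (2), suppose $\sigma_f$ preserves a horizontal slice $H$. The restriction $\pi_v|H$ is an isometry onto $\Teich(S)$ by Proposition~\ref{prop:torus}(2), and the intertwining from Part (1) shows that $\pi_v|H$ conjugates $\sigma_f|H$ to $\sigma_{\bar f}$ on $\Teich(S)$. Identifying $\Teich(S)$ with $\mathbb{H}^2$, the map $\sigma_{\bar f}$ acts via the fractional linear transformation associated to the integer matrix of $\bar f$ and is therefore an isometry; hence so is $\sigma_f|H$. For the affine claim, pick any $Z\in H$: the map $f\colon (\Sigma,\sigma_f(Z))\to(\Sigma,Z)$ is holomorphic and both complex structures lie in $H$. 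In the torus case every holomorphic self-map of a torus is affine. In the $S^2$ case, $f$ lifts through the hyperelliptic involution to a holomorphic, hence affine, self-map of the torus cover, so $f$ is itself affine with respect to the induced singular affine structure on $S^2$.

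For Part (3), I argue by contradiction. Suppose $\sigma_f$ preserves no horizontal slice but some orbit $(Z_i)$ fails to be weakly contracting, so there exists $i$ with $d(Z_i,Z_{i+1}) = d(Z_{i+1},Z_{i+2})$. By Proposition~\ref{prop:pullback}(1) (non-expansion of $\sigma_f$) together with Teichm\"uller's uniqueness theorem, this equality forces the lift $h_i^f$ of the Teichm\"uller map $h_i\colon Z_i\to Z_{i+1}$ to itself be the Teichm\"uller map $Z_{i+1}\to Z_{i+2}$. Using the Part (1) analysis, this means the 1-pronged singularity set $T_i$ of $h_i$ satisfies $f^{-1}(T_i\cap P)\subseteq P$.

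To complete the contradiction I plan to iterate: if the same equality propagates along the orbit, then $f^{-k}(T_i\cap P)\subseteq P$ for all $k$, and since each pullback multiplies cardinality by $d>1$ while $|P|$ is fixed, this forces $T_i\cap P = \emptyset$. The Euler-characteristic constraint $|T_i|\geq 4$ (coming from the degree of the associated quadratic differential on $S^2$) then yields $T_i = P_0$, so by Proposition~\ref{prop:torus}(3) the points $Z_i$ and $Z_{i+1}$ lie in the same horizontal slice, which is then preserved by $\sigma_f$---contradicting the hypothesis. The hard part is verifying that a single equality propagates to equalities for all subsequent indices in the orbit; I expect this to come from a careful analysis of the induced map on the vertical factor $\Teich(X_\square,|P|)$, which should be weakly contracting in the absence of a fixed point.
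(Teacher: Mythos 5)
Parts (1) and (2) follow the paper's approach and are essentially correct. In Part~(2), for the affine claim the paper argues more directly: by construction of the product decomposition, a horizontal slice is exactly the locus of complex structures compatible with a fixed singular affine structure on $\Sigma$, so $\sigma_f(H)\subseteq H$ immediately means $f$ preserves that affine structure. Your route---that $f\colon(\Sigma,\sigma_f(Z))\to(\Sigma,Z)$ is holomorphic, and holomorphic self-maps of tori are affine---gets the right answer, but you should be explicit that what makes $f$ preserve a \emph{single} affine structure (rather than just being affine from one to another) is precisely that $Z$ and $\sigma_f(Z)$ lie in a common slice $H$. Your isometry argument via the fractional linear action is an honest alternative to the paper's.

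For Part~(3), the gap you acknowledge is real: a single equality $d(Z_i,Z_{i+1})=d(Z_{i+1},Z_{i+2})$ gives one containment $f^{-1}(T_i\cap P)\subseteq P$, and nothing you have said forces the equality to propagate to subsequent indices, so the cardinality blowup you want does not kick in. The paper's proof is structured so that no iteration along an orbit is needed. It establishes a single-step claim: $d(\sigma_f(X),\sigma_f(Y))<d(X,Y)$ if and only if $X$ and $Y$ lie in different horizontal slices. Granting this, Part~(3) is immediate from Part~(1): if $\sigma_f$ preserves no horizontal slice then for every $Z$ the points $Z$ and $\sigma_f(Z)$ lie in distinct slices, so $\tau_{\sigma_f}(Z)>\tau_{\sigma_f}(\sigma_f(Z))$ pointwise, and every orbit is weakly contracting. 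The substantive direction of that claim is ``different slices implies strict decrease''; its proof in the paper uses that since $p\in P$ is not post-critical, all $d$ preimages of $p$ are noncritical, carry 1-pronged singularities for the lift $\tilde h$, and lie in $f^{-1}(P)$, which is disjoint from $P_0$. You should work through this claim carefully on your own: the step that the lifted singular set cannot lie entirely inside the marked set is exactly where a cardinality observation (in the spirit of $\lvert f^{-1}(P)\rvert=d\lvert P\rvert>\lvert P\rvert$, applied to the relevant singular set rather than iterated along an orbit) must enter, and spelling that out is what your proposal is missing.
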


For an exceptional $\Sigma = (S,P)$ we have that $\Teich(S)$ is isometric to $\H^2$ (up to scale).  And by Proposition~\ref{verticalproj} the restriction of $\pi_v$ to each horizontal slice of $\Teich(\Sigma)$ is an isometry to $\Teich(S)$.  Thus, Proposition~\ref{horisom}, implies that $\sigma_f$ is isometrically conjugate, through $\pi_v$, to an isometry of $\H^2$. 

\begin{proof}[Proof of Proposition~\ref{prop:toruspullback}]

We begin with the first statement.  It follows from the definitions that $\sigma_{\bar f} \circ \pi_v = \pi_v \circ \sigma_f$, and hence that $\sigma_f$ preserves the set of vertical slices of the product.

Now suppose that $Z_1$ and $Z_2$ lie in the same horizontal slice.  By Proposition~\ref{torus:hor} the Teichm\"uller map $h : Z_1 \to Z_2$ has no 1-pronged singularities at $P$.  Since the map $h$ is homotopic to the identity, it has a lift through $f$.  We denote this lift by $\tilde h$.  By the definition of the pullback, we have that $\tilde h$ maps $\sigma_f(Z_1)$ to $\sigma_f(Z_2)$, in the sense that $\tilde h^*(\sigma_f(Z_2))=\sigma_f(Z_1)$.  

By Proposition~\ref{torus:hor}, the first statement is a consequence of the following claim: the map $\tilde h$ is the Teichm\"uller map $\sigma_f(Z_1) \to \sigma_f(Z_2)$ and the singularities for the associated foliations all lie at $P_0$.  Since $\tilde h$ is the lift of $h$ through $f$, it is a pseudo-Teichm\"uller map whose foliations are the preimages of the foliations for $h$.  Since the 1-pronged singularities for the latter all lie at points of $P_0$, and since in both exceptional cases the preimage of $P_0$ is the union of $P_0$ with the set of critical points for $f$, it follows that the foliations for $\tilde h$ have 1-pronged singularities only at $P_0$ and that $\tilde h$ is a Teichm\"uller map, as desired.  

Suppose now that $\sigma_f$ preserves a horizontal slice $H$ of $\Teich(\Sigma)$.  From the equality $\sigma_{\bar f} \circ \pi_v = \pi_v \circ \sigma_f$ used above, we conclude that $\sigma_f|H$ is conjugate under $\pi_v|H$ to the induced pullback map $\sigma_f^{hor} \colon \Teich(S) \to \Teich(S)$, as in the second statement.

We next prove that if $\sigma_f$ preserves a horizontal slice, then $f$ is affine (as in the second statement). By the definition of the product structure on $\Teich(\Sigma)$, its horizontal slices correspond exactly to the (singular) affine structures on $\Sigma$.  Therefore, if $f$ preserves a horizontal slice, it preserves an affine structure, and hence is affine. 

The remaining two statements (really the third statement and the second conclusion of the second statement) will be consequences of the following claim: if $X$ and $Y$ are points of $\Teich(\Sigma)$, then $d(\sigma_f(X),\sigma_f(Y))$ is strictly less than $d(X,Y)$ if and only if $X$ and $Y$ lies in different horizontal slices.  Indeed, by Proposition~\ref{torus:hor}, $X$ and $Y$ lie in different horizontal slices if and only if the foliations for the Teichm\"uller map $h : X \to Y$ have a 1-pronged singularity at a point of $P$.  Since the points of $P$ are not post-critical (by definition), the latter is true if and only if the foliations for the lifted map $\tilde h : \sigma_f(X) \to \sigma_f(Y)$ have a 1-pronged singularity at a point of $f^{-1}(P)$.  Since $f^{-1}(P)$ is disjoint from $P_0$ (again using the fact that the points of $P$ are not post-critical), the claim now follows from a second application of Proposition~\ref{torus:hor}.

Suppose that $\sigma_f$ preserves a horizontal slice $H$.  By the claim and the fact that $\sigma_f$ is non-expanding (Proposition~\ref{nonexp}), it follows that $\sigma_f|H$ is an isometry.

Finally, if $\sigma_f$ preserves no horizontal slice then by the first statement it follows that for any $Z \in \Teich(\Sigma)$, the image $\sigma_f(Z)$ lies in a different horizontal slice of $\Teich(\Sigma)$.  Combining this with the claim completes the proof. 
\end{proof}

\begin{proof}[Proof of the \"Ubertheorem: Exceptional cases]

As in the statement of the theorem, $f\colon \Sigma \to \Sigma$ is an exceptional dynamical branched cover with degree $d > 1$.  In particular, we have that $\Sigma = (S,P)$ with either  $S=(T^2,\emptyset)$ or $S=(S^2,P_0)$ with $|P_0|=4$.   In either case, the marked points of $S$ are the post-critical points for $f$.  

By Lemma~\ref{preserveproduct}, $\sigma_f$ preserves the product structure on $\Teich(\Sigma)$.  We treat two cases, according to whether or not $\sigma_f$ preserves a horizontal slice of $\Teich(\Sigma)$.

If $\sigma_f$ preserves no horizontal slice then by Lemma~\ref{noslice}, each $\sigma_f$-orbit is weakly contracting.  By Proposition~\ref{prop:metricorbit}, each $\sigma_f$-orbit leaves every compact subset of moduli space.
Then by Proposition~\ref{case3}, the map $f$ strongly reducible.

Now suppose $\sigma_f$ does preserve a horizontal slice $H$.  By parts (2) and (3) of Proposition~\ref{prop:toruspullback}, the restriction $\sigma_f|H$ is isometrically conjugate to an isometry $\varphi$ of $\Teich(S) \cong \H^2$.  There are three possibilities for $\varphi$: it can be elliptic, loxodromic, or parabolic.

If $\varphi$ is elliptic then $\sigma_f|H$, hence $\sigma_f$, has a fixed point and $f$ is holomorphic.  And if $\varphi$ is loxodromic, then by Proposition~\ref{horisom} and Proposition~\ref{case2}, the map $f$ is pseudo-Anosov.  

In the remainder of the proof we deal with the case where $\varphi$ is parabolic.  In this case, the translation length of $\varphi$ is 0.  It then follows from Proposition~\ref{horisom} that the translation length $\tau_f$ is 0.  It also follows from Proposition~\ref{horisom} and Proposition~\ref{verticalproj} that this translation length is not realized by $f$ (translation distances in $\Teich(\Sigma)$ are no smaller than the corresponding translation distances in $H$).  

By Proposition~\ref{horisom}, the map $f$ is an affine torus map or a hyperelliptic quotient of an affine torus map. We first treat the case where $\Sigma$ is a torus and $f$ is affine.

Since $\varphi$ is parabolic, the linear map homotopic to $f$ must have a single repeated eigenvalue, namely $\sqrt{d}$.  We can change coordinates so that $f$ is of the form
\[
\left(\begin{array}{cc}
  \sqrt{d}   & \ast \\
    0 & \sqrt{d}
\end{array}\right)
\]
where $d = \deg(f)$.  It must be that $\sqrt{d}$ is a natural number.  The preimage under $f$ of any horizontal curve in $T^2$ is a collection of horizontal curves.  We will construct a strong reduction system consisting of horizontal curves.

Let $\Gamma = \{\gamma_1,\dots,\gamma_k\}$ be a maximal multicurve in $\Sigma$ consisting of horizontal curves.  The number of components $k$ is the same as the number of horizontal curves in $T^2$ that pass through a marked point of $\Sigma$ (although such curves are not permitted to be components of $\Gamma$, exactly because they pass through marked points).  We label each component $\gamma_i$ by its modulus (equivalently, the supremum of Euclidean widths of annuli in $\Sigma$ that have horizontal boundary curves and that contain the given $\gamma_i$).  These numbers are the vertical distances between marked points with distinct, but consecutive, coordinates in the vertical direction.

We claim that the resulting labeled multicurve, which we still call $\Gamma$, is a strong reduction system for $f$.  For each $i$, we may choose a closed annulus $A_i$ that has horizontal boundary, that has Euclidean width $\ell_i$, and that is homotopic in $\Sigma$ to $\gamma_i$.  If $\Sigma$ has marked points, then each $A_i$ has at least one marked point on each boundary component, and the union of all of the $A_i$ is $\Sigma$.  (If $\Sigma$ has no marked points, then $k=1$ and $A_1$ should be taken to be all of $T^2$.)  Each $f^{-1}(A_i)$ is a collection of $\sqrt{d}$ annuli, each with width $\ell_i/\sqrt{d}$ (the above matrix for $f$ stretches in the vertical direction by $\sqrt{d}$).  Since $f$ is a covering map, the union over $i$ of the $f^{-1}(A_i)$ is all of $\Sigma$, from which it follows that $\Gamma$ is a strong reduction system and so $f$ is strongly reducible, as desired. 

Suppose now that $\Sigma = ((S^2,P_0),P)$.  Since $f$ is exceptional and $\sigma_f$ preserves a horizontal slice of $\Teich(\Sigma)$, it follows from Proposition~\ref{horisom} that $f$ is affine.  Thus, $f$ lifts to an affine map $\tilde f$ of $T^2$.  What is more, $\tilde f$ can be regarded as an affine map of $\tilde \Sigma = (T^2,\tilde P)$, where $\tilde P$ is the preimage of $P$ under the hyperelliptic involution.  As above we obtain a strong reduction system $\tilde \Gamma$ in $\tilde \Sigma$, which we may assume is horizontal. By construction, $\tilde \Gamma$ is invariant under the hyperelliptic involution.  Hence it gives rise to a labeled multicurve $\Gamma$ in $\Sigma$.  Let $\HM(\Sigma)$ denote the set of labeled horizontal multicurves on $\Sigma$ and let $\SHM(\tilde \Sigma)$ denote the set of symmetric labeled horizontal multicurves on $\tilde \Sigma$ (we concentrate on horizontal curves to avoid curves in $\Sigma$ with connected preimage).  There is a commutative diagram
\[
 \begin{tikzcd}
  \SHM(\tilde \Sigma) \arrow[r,"\tilde f^\ast"] \arrow[d,"\cong"] & \SHM(\tilde \Sigma) \arrow[d,"\cong"] \\
 \HM(\Sigma) \arrow[r,"f^\ast"]  & \HM(\Sigma)
 \end{tikzcd}
\]
(where the horizontal maps are the natural pullback maps).  The symmetric, horizontal strong reduction system for $\tilde f$ thus gives a (horizontal) strong reduction system for $f$, as desired.

For exceptional maps, the only exclusivity statement is that types 1 and 3 are exclusive.  This follows by the same reasoning as in the non-exceptional case.  (In Appendix~\ref{sec:orb} we explain why the argument for exclusivity of types 1 and 2 only applies in the non-exceptional cases.)  The uniqueness statement for type 3 (pseudo-Anosov) maps follows from the same argument as in the non-exceptional case.
\end{proof}

We end by pointing out one consequence of the proof that is heretofore unmentioned: an exceptional dynamical branched cover of $\Sigma=(S,P)$ is affine if and only if it has no strong reduction system that is inessential in $S$.


\appendix

\section{Strong reduction systems and Thurston obstructions}
\label{sec:orb}

Our main goal in this appendix is to give a geometric characterization of the orbifold for a dynamical branched cover.  With this characterization, we accomplish two goals:
\begin{enumerate}
\item we give a direct proof that strong reduction systems are obstructions to holomorphicity for dynamical branched covers with hyperbolic orbifold,
\item we show that a dynamical branched cover of the sphere is exceptional if and only if its orbifold is the $(2,2,2,2)$-orbifold, and
\end{enumerate}
The first item explains why strong reduction systems are the ``obvious'' obstructions to holomorphicity for a non-exceptional dynamical branched cover.  The second justifies our characterization of exceptional maps in the introduction.  All of the material in this section was surely known to Thurston, although the authors are unable to find the arguments in the existing literature.  The argument in Theorem~4.1 of Douady--Hubbard is very similar to our argument for the first item.  Their proof concludes by considering the derivative of the pullback map on Teichm\"uller space, which in turn relies on their analogue of our Proposition~\ref{prop:stable}.  Our argument ends by simply considering the lifted map of the hyperbolic plane.

\p{Orbifolds for dynamical branched covers} For our purposes, a (2-dimensional) orbifold is a marked surface $(S,P)$ endowed with a labeling of $P$ by $\mathbb{N} \cup \{\infty\}$, that is, a function $\nu_P : P \to \mathbb{N} \cup \{\infty\}$.  If $\nu_P(p) > 1$ then we refer to $p$ as a cone point.  We will explain below the geometric meaning of an orbifold, which will allow us to use geometry to study dynamical branched covers.

A map $f : (S,P) \to (T,Q)$ is an \emph{orbifold cover} if it induces a branched covering map $S \to T$ and whenever we have $p \in P$, $q \in Q$, and $f(p)=q$, then
\[
(\deg f_p) \cdot \nu_p = \nu_q.
\]
Here, $\deg f_p$ is the local degree of $f$ at $p$.  

For two orbifolds $(S,P)$ and $(S',P')$ we write $(S',P') \sqsubseteq (S,P)$ if
 \begin{itemize}
 \item $S' \subseteq S$,
 \item $P' \subseteq P$, and
 \item for each $p \in P'$ we have $\nu_{P}(p) \mid \nu_{P'}(p)$.
\end{itemize}

A \emph{partial orbifold cover} from $(S,P)$ to $(T,Q)$ is an orbifold cover 
\[
(S',P') \to (T,Q)
\]
with $(S',P') \sqsubseteq (S,P)$.  And a \emph{partial self-orbifold cover} of an orbifold $(S,P)$ is a partial orbifold cover from $(S,P)$ to itself.  To our knowledge this definition has not appeared in the literature, although we strongly suspect it was known to Thurston.

A partial self cover of surfaces is a covering map $S' \to S$ where $S' \subseteq S$ (we sometimes require $S'$ to be open in $S$).  We can think of this as a special case of a partial self-orbifold cover, since a deleted point can be regarded as an orbifold point with label $\infty$.  

For a given dynamical branched cover $f : (S,P) \to (S,P)$, a basic problem is to understand all orbifold structures on $(S,P)$ so that $f$ induces a partial self-orbifold cover of $(S,P)$.  Specifically, this means that there is some $(S',P') \sqsubseteq (S,P)$ so that the induced map $f : (S',P') \to (S,P)$ is an orbifold cover.  Once we explain the geometric meaning of orbifolds below, we will be able to use the geometry of the orbifold to study $f$.  

Given $f : (S,P) \to (S,P)$, there is a minimal labeling of $P$ so that $f$ is a partial self-orbifold covering map.  The label at $p \in P$ is determined as follows.  For each $k$ and each critical point $c$ with $f^k(c)=p$, we compute the local degree of $f^k$ at $c$.  The label $\nu_p$ is the least common multiple of these local degrees over all such choices of $k$ and $c$.  For each $q \in f^{-1}(P) \setminus P$, the label $\nu_q$ is defined to be $\nu_p$, where $p=f(q)$.  

So, for example, if $c \in P$ is critical and $f^k(c)=c$ for some $k$ (that is, the portrait for $f$ has a loop based at $c$) then $\nu_c = \infty$. 

It is a fact that every orbifold structure on $(S,P)$ for which $f$ is a partial self-orbifold covering map is a multiple of the one constructed above.  As such, this orbifold structure is often referred to as \emph{the} orbifold for $f$.  

\p{Euler characteristic and hyperbolic orbifolds} The Euler characteristic of an orbifold $(S,P)$ is given by the Riemann--Hurwitz formula
\[
\chi(S,P) = \chi(S) + \sum_P \left(\frac{1}{\nu_p}-1\right)
\]
(here $\chi(S)$ is the usual Euler characteristic for surfaces).  We can think of an orbifold topologically as the surface obtained from $S$ by deleting a disk around each $p \in P$ and gluing in a fraction of a disk, namely, one $\nu_p$th of a disk; hence the formula.  We say that $(S,P)$ is hyperbolic, Euclidean, or spherical if $\chi(S,P)$ is negative, zero, or positive, respectively.

Under an orbifold covering map $f : \Sigma \to T$ of degree $d$ we have the usual multiplicative property 
\[
\chi(\Sigma) = d \cdot \chi(T).
\]
It follows that in an orbifold covering, both orbifolds are of the same type: hyperbolic, Euclidean, or spherical.

\p{Geometric orbifolds} There is an entirely geometric approach to orbifolds.  Let $X$ be $\R^2$, $\H^2$, or $S^2$, and let $G$ be a discrete group of isometries of $X$ (unlike a covering space action, the action of $G$ might not be free).  The quotient $\Sigma = X/G$ is naturally described as an orbifold: the label of a point in $\Sigma$ is the cardinality of the stabilizer in $G$ of any preimage in $X$.  We think of a point labeled $\nu$ as a cone point of order $\nu$.  We refer to any orbifold constructed in this way as a geometric orbifold.  The space $X$ is the orbifold universal cover of $\Sigma$ and $G$ its orbifold fundamental group.   

Thurston determined exactly which orbifolds are geometric \cite[Theorem~13.3.6]{Thurston}.  In particular, he proved that all hyperbolic and Euclidean orbifolds are geometric: they arise as quotients of $\H^2$ and $\R^2$ by discrete groups of isometries as above.  He also proved that all orbifolds with three or more cone points are geometric.  It follows from the Gauss--Bonnet theorem that the space $X \in \{\R^2,\H^2,S^2\}$ is determined uniquely by the orbifold $X/G$.  

\p{Lifting to the universal cover} Now that we have given geometric meaning to the notion of an orbifold, we can do the same for the notion of an orbifold covering map.  Specifically, it is a fact that any orbifold covering map lifts to a map of their orbifold universal covers. In other words, if $f : \Sigma \to T$ is a partial orbifold covering map and $\pi_\Sigma : X \to \Sigma$ and $\pi_T : X \to T$ are the universal covering maps, then there is a map $\tilde f$ so that the following diagram commutes
\[
 \begin{tikzcd}
  X \arrow[r,"\tilde f"] \arrow[d,"\pi_\Sigma",right] & X \arrow[d,"\pi_T"] \\
 \Sigma \arrow[r,"f"]  & T
 \end{tikzcd}
\]
Indeed, the definition of a partial orbifold covering map implies that $f$ induces a well-defined homomorphism of orbifold fundamental groups.  As such, the natural analogue of the usual lifting criterion from algebraic topology applies, implying the existence of $\tilde f$.  If $f$ is holomorphic then, since $\pi_\Sigma$ and $\pi_T$ are holomorphic by definition, the induced map $\tilde f$ is holomorphic.  

\p{Compatible measured foliations} Let $\Sigma = (S,P)$ be a marked surface endowed with a complex structure, and let $(\F^+,\F^-)$ be a pair of transverse measured foliations on $\Sigma$ (as usual, any 1-pronged singularities of the singularities must lie at points of $P$).  Let $Q$ be the set of singular points of the pair of foliations.  The pair $(\F^+,\F^-)$ induces a pair of transverse, nonsingular foliations on $\Sigma \setminus Q$.  Further, these foliations induce a complex structure on $\Sigma \setminus Q$, hence on $\Sigma$ (by the removable singularity theorem).  The charts for this complex structure map open sets in $\Sigma \setminus Q$ to $\C$ in such a way that $(\F^+,\F^-)$ map to the measured foliations on $\C$ given by horizontal and vertical lines, the measures for the latter being $|dy|$ and $|dx|$, respectively.  We say that the pair $(\F^+,\F^-)$ is compatible with the complex structure on $\Sigma$ if the complex structures agree.  

The reader familiar with quadratic differentials will recognize that a compatible pair of foliations on $\Sigma$ is the same as an integrable meromorphic quadratic differential on $\Sigma$ with all (simple) poles at points of $P$.  Since every marked surface with a complex structure admits a nontrivial quadratic differential (on $S_g$ there is a $(6g-6)$-dimensional vector space of these), every complex structure has a compatible pair of measured foliations.  

A pair of measured foliations on $\Sigma$ gives more information than a complex structure: it gives a Euclidean structure on $\Sigma \setminus Q$, and a singular Euclidean structure on $\Sigma$.  In particular, we have an area form as well as a total area.  

\p{The Jenkins extremal problem} Let $\Sigma = (S,P)$ be a marked surface endowed with a complex structure, and let $\Gamma = \{\gamma_1,\dots,\gamma_k\}$ be a labeled multicurve in $\Sigma$.  We denote the weight on $\gamma_i$ by $w(\gamma_i)$.

A multi-annulus in $\Sigma$ is a disjoint union of domains, each biholomorphic to an open annulus in $\C$, and each disjoint from $P$.  We consider the following extremal problem: given the labeled multicurve $\Gamma$ as above, find a multi-annulus $A = \{A_1,\dots,A_k\}$ with the following properties:
\begin{enumerate}
\item each $A_i$ is homotopic to $\gamma_i$,
\item $(\mu(A_1),\dots,\mu(A_k))$ is a multiple of $(w(\gamma_1),\dots,w(\gamma_k))$,
and 
\item $(\mu(A_1),\dots,\mu(A_k))$ is maximal with respect to the first two properties. 
\end{enumerate}
Jenkins proved that when $S$ is not the torus, this extremal problem has a unique solution \cite[Theorem 1]{jenkins}.  This solution corresponds to a pair of measured foliations $(\F^+,\F^-)$ that is compatible with the complex structure.  The singular leaves of $\F^+$ form a finite graph in $S$ (with singular points as vertices) whose complement is a disjoint union of open annuli, each foliated by smooth closed leaves of $\F^-$.  The modulus of each annulus with respect to the complex structure is the modulus of the corresponding Euclidean annulus (the modulus of a Euclidean annulus with circumference $C$ and heights $H$ is $2{\pi}H/C$).  By the uniqueness of the extremal problem, it follows that the pair $(\F^+,\F^-)$ is unique up to scale.  

\p{Strong reduction systems as Thurston obstructions} Let $f : \Sigma \to \Sigma$ be a dynamical branched cover.  Suppose that
\begin{itemize}
\item $f$ is holomorphic and
\item $f$ has a strong reduction system $\Gamma$.
\end{itemize}
We will show that either $\deg f = 1$ or $f$ has Euclidean orbifold.  This means that for $f$ with hyperbolic orbifold and degree greater than 1, strong reduction systems are obstructions to holomorphicity (and vice versa).  

Fix a complex structure on $\Sigma$ with respect to which $f$ is holomorphic (we may have to replace $f$ with a homotopic map).  Let $(A_1,\dots,A_k)$ be the multi-annulus that gives the solution to the Jenkins extremal problem associated to $\Gamma$, and let $(\F^+,\F^-)$ be a corresponding pair of measured foliations.  By the definition of a strong reduction system, the preimage is an equal or larger solution to the extremal problem.  Indeed, the preimage of the collection $(A_1,\dots,A_k)$ is, after consolidating parallel annuli, a multi-annulus where the moduli are given by the weights on $f^*(\Gamma)$ (this uses three basic facts: (1) an $m$-fold cover of annuli multiplies modulus by $m$, (2) the modulus of a union of the closures of two adjacent annuli is the sum of the moduli, and (3) modulus is monotone under inclusion).  

By the previous paragraph, and the fact the compatible foliations for a solution to the Jenkins problem is unique up to scale, it must be that $(f^*\F^+,f^*\F^-)$ is a positive multiple of $(\F^+,\F^-)$.  Moreover, since a cover of degree $d$ reduces Euclidean area by a factor of $d$, we have 
\[
(f^*\F^+,f^*\F^-) = \sqrt{d} \cdot (\F^+,\F^-)
\]
Therefore, if we lift the map $f$ to the universal cover, we obtain a biholomorphic homothety where the scaling factor is $\sqrt{d}$.  Biholomorphic maps of the hyperbolic plane are isometries, and so it must be that $d=1$ or that the orbifold for $f$ is Euclidean, as desired.

\p{Orbifolds and exceptional maps} We have one more loose end to tie up with respect to orbifolds and Thurston's characterization of rational maps.  As promised in the introduction, we explain here why an (unmarked) dynamical branched cover of the sphere has orbifold the $(2,2,2,2)$-orbifold if and only if is a hyperelliptic quotient of a torus map.  This statement is originally due to Cannon--Floyd--Parry--Pilgrim \cite[Theorem 1.4]{CFPP}.

We explained one direction in the introduction: hyperelliptic quotients of torus maps have the $(2,2,2,2)$-orbifold as their orbifold.  Now suppose that $f : (S^2,P) \to (S^2,P)$ is a dynamical branched cover with $(2,2,2,2)$-orbifold.  We would like to show that $f$ lifts---through the hyperelliptic involution---to a map of the torus.  In other words, we would like to show that there is a map $\tilde f$ as in the following diagram:
\[
 \begin{tikzcd}
  T^2 \arrow[r,dashed,"\tilde f"] \arrow[d,"p "] & T^2 \arrow[d,"p "] \\
 (S^2,P) \arrow[r,"f"]  & (S^2,P)
 \end{tikzcd}
\]
where $p$ is the quotient map $T^2 \to T^2/\langle \iota \rangle = (S^2,P)$.
The orbifold fundamental group of $(S^2,P)$ has the presentation
\[
\pi_1^{orb}(S^2,P) \cong \langle a_1,a_2,a_3,a_4 \mid a_1^2=a_2^2=a_3^2=a_4^2=abcd=1 \rangle
\]
and the image of the induced map
\[
p_* : \pi_1(T^2) \to \pi_1^{orb}(S^2,P)
\]
is the even subgroup of $\pi_1^{orb}(S^2,P)$, that is, the kernel of the map
\begin{align*}
\pi_1^{orb}(S^2,P) &\to \Z/2 \\
a_i &\mapsto 1.
\end{align*}
Since all four points of $P$ carry the label 2, it follows that the local degree of $f$ at each point of $P$ is 1.  Thus, 
the induced map $f_*$ maps the even subgroup of $\pi_1^{orb}(S^2,P)$ to itself.  Finally, by the lifting criterion for orbifold covering maps implies the existence of $\tilde f$, as desired.


\section{Topological polynomials, Levy cycles, and Levy--Berstein} 
\label{sec:levy}

In this appendix we prove a strong form of the theorem which says that if a topological polynomial is not rational then it has a degenerate Levy cycle.  Again, this theorem is due to the work of Berstein, Hubbard, Levy, Rees, Tan, and Shishikura.  Our strengthening is Proposition~\ref{prop:levy} below.  In the statement, we say that a strong reduction system is \emph{minimal} if all multicurves with fewer components fail to underlie a strong reduction system.  If a dynamical branched cover has a strong reduction system, then it has a minimal one.  

\begin{proposition}
\label{prop:levy}
Let $f \colon (\R^2,P) \to (\R^2,P)$ be a topological polynomial.  Every minimal strong reduction system for $f$ is a degenerate Levy cycle.  In particular, if $f$ has a strong reduction system then it has a degenerate Levy cycle.  
\end{proposition}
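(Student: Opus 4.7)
The approach exploits two features of a topological polynomial $f$ of degree $d$: the fully ramified fixed point at $\infty$ gives every preimage of a disk in $\R^2$ a clean decomposition, and Perron--Frobenius on the transition matrix of a minimal strong reduction system forces a cyclic pattern of degree-one preimages.

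Let $\Gamma = \{\gamma_1,\dots,\gamma_k\}$ be a minimal strong reduction system, each $\gamma_i$ bounding a disk $D_i\subset\R^2$. First I would establish the disk picture: since $f^{-1}(\infty)=\{\infty\}$, the set $f^{-1}(S^2\setminus D_i)$ is connected (it contains $\infty$), so $f^{-1}(D_i)$ is a disjoint union of open disks $\Delta\subset\R^2$, each bounded by a single component of $f^{-1}(\gamma_i)$, with $\deg(f|_\Delta) = \deg(f|_{\partial\Delta})$. Next, letting $M$ be the transition matrix of $\Gamma$ so that $Mw\geq w$ for some positive $w$, Perron--Frobenius gives $\rho(M)\geq 1$; if $M$ were reducible then some strongly connected component would itself support a strong reduction system on a strict sub-multicurve, contradicting minimality, so $M$ is irreducible and admits a strictly positive Perron eigenvector.

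The heart of the argument is to show that for each $i$ some degree-$1$ component of $f^{-1}(\gamma_i)$ is homotopic to an element of $\Gamma$. Granting this, writing $\phi(i)=j$ when that component is homotopic to $\gamma_j$, iteration of $\phi$ on any starting index yields a cyclic orbit $i_0,i_1,\dots,i_{m-1}$. The corresponding sub-multicurve of $\Gamma$ supports a Levy cycle and is therefore a strong reduction system in its own right (the relevant submatrix of $M$ dominates a cyclic permutation matrix, hence has spectral radius at least $1$); by minimality of $\Gamma$, this orbit must exhaust $\Gamma$, so $\phi$ is a cyclic permutation of $\{1,\dots,k\}$ witnessing a Levy cycle on all of $\Gamma$. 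Degeneracy is then automatic for topological polynomials: if $\tilde\gamma_i$ is the degree-one component of $f^{-1}(\gamma_i)$ homotopic to $\gamma_{\phi(i)}$ and $\Delta$ is the disk in $\R^2$ that $\tilde\gamma_i$ bounds, then by the disk picture $\Delta$ is a component of $f^{-1}(D_i)$ with $\deg(f|_\Delta)=1$, and $\Delta$ is homotopic in $(S^2,P)$ to $D_{\phi(i)}$ because both lie in $\R^2$ and have homotopic boundary; taking $\Delta_i = D_i$ in the definition of a degenerate Levy cycle verifies the required condition.

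The main obstacle is the existence claim just invoked: every $\gamma_i$ admits a degree-one essential preimage. I would extract this from the combination of the Perron--Frobenius equation $Mv = \rho(M)\,v$ with the Riemann--Hurwitz identity $\sum_\Delta \deg(f|_\Delta)=d$ over components of $f^{-1}(D_i)$, leveraging the polynomial constraint that critical points other than $\infty$ account for total ramification only $d-1$; any configuration in which every essential preimage of some $\gamma_i$ has degree $\geq 2$ should consume more of this ramification budget than is available, contradicting $\rho(M)\geq 1$ and the positivity of the eigenvector.
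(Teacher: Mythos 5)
Your outline has the right final picture (the cycle argument, the degeneracy argument) but there is a genuine gap at the step you yourself flag as ``the main obstacle,'' and the Riemann--Hurwitz budget you propose there does not close it.

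The difficulty is this.  Suppose every component of $f^{-1}(\gamma_i)$ that is homotopic to some element of $\Gamma$ has degree at least $2$.  Your ramification accounting $\sum_\Delta \deg(f|_\Delta)=d$ over components $\Delta$ of $f^{-1}(D_i)$ gives that the ramification inside $f^{-1}(D_i)$ equals $d - \ell_i$, where $\ell_i$ is the number of such components.  Summing over $i$ and comparing to the finite-plane budget $d-1$ is only a valid count if the disks $D_1,\ldots,D_k$ are pairwise disjoint (else a single critical point is charged to several $D_i$); you have not established that the curves of a minimal strong reduction system are unnested, and in fact that innermost/unnested property is a key combinatorial step in any correct proof.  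Even granting unnested disks, the resulting inequality $\sum_i(d-\ell_i)\le d-1$ does not contradict the hypothesis: a configuration such as $k=2$, $d=5$, with transition matrix $\left(\begin{smallmatrix}1/2&1/2\\1/2&1/2\end{smallmatrix}\right)$ (so every $\Gamma$-parallel preimage has degree $2$) and one inessential degree-$1$ preimage over each $D_i$ uses exactly $2+2=4=d-1$ units of ramification.  The Perron--Frobenius eigenvector does not rule this out either, because $\rho(M)\ge 1$ is satisfied by many matrices with all entries at most $1/2$.  What actually kills such a configuration is a planar-topology fact you never invoke: any two components of $f^{-1}(\Gamma)$ bound disjoint disks in $\R^2$ (once unnestedness of $\Gamma$ is in hand, the $f$-preimages of the $D_i$ are pairwise disjoint disks), and two disjoint essential disks in $(\R^2,P)$ cannot carry the same subset of $P$, hence cannot be homotopic.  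This forces each vertex of the lifting graph to have in-degree exactly one, so the graph is a disjoint union of directed cycles, and the cyclic labeling constraint $w/(d_1\cdots d_m)\ge w$ then forces every edge degree to be $1$.  That is where the ``degree-one preimage'' comes from; it is not a consequence of Riemann--Hurwitz bookkeeping.

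Two smaller items.  First, ``$f^{-1}(S^2\setminus D_i)$ is connected because it contains $\infty$'' is not a valid deduction on its own; what makes it work is that the component containing $\infty$ already has degree $d$ (since $f^{-1}(\infty)=\{\infty\}$ with multiplicity $d$), hence exhausts the preimage.  Second, in the paragraph extracting a cycle, $\phi$ as defined need not be a function (several indices $j$ could serve), so one should fix a choice; this is harmless but worth saying.  Your Perron--Frobenius reduction showing $M$ irreducible under minimality is correct, though it is a detour: one can instead observe directly (as the standard proof does) that a minimal $\Gamma$ has no terminal vertex in its lifting graph, since deleting it would yield a smaller strong reduction system.  The steps you do have -- identifying preimages of disks with disks, passing from a Levy cycle to a degenerate one via the disk picture, and using minimality to promote a cycle to all of $\Gamma$ -- are all sound; what is missing is the unnestedness lemma and the disjoint-disks/in-degree-one lemma that replace your ramification budget.
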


As in the introduction, the Levy--Berstein theorem says that if $f$ is a topological polynomial and each point of $P$ has a critical point in its forward $f$-orbit then $f$ is rational.  This is immediate from Proposition~\ref{prop:levy}, since the union of the disks for a degenerate Levy cycle contains no critical points.  

Our argument for Proposition~\ref{prop:levy} is a modification of the argument in Hubbard's book for an analogous statement about Thurston obstructions \cite[Theorem 10.3.7]{hubbard}.  We use two tools, innermost curves and lifting graphs.  

\p{Innermost curves} The main feature that makes topological polynomials different from topological rational maps---and what allows us to prove Proposition~\ref{prop:levy}---is that every curve in $(\R^2,P)$ has a well-defined interior: the compact region of $\R^2$ bounded by the curve.  Moreover, if $\delta$ is a component of $f^{-1}(\gamma)$ then $f$ maps the interior of $\delta$ onto the interior of $\gamma$.  Given a multicurve $\Gamma$ we will denote by $\Gamma^\circ$ the multicurve given by its innermost components.  

\p{Lifting graphs} If $\Gamma$ is an $f$-stable, labeled multicurve for a dynamical rational map $f$, we define a corresponding a directed graph, the lifting graph, as follows: the vertices are the components of $\Gamma$ and there is a directed edge from $\gamma$ to $\delta$ if $\delta$ is homotopic to a component of $f^{-1}(\gamma)$ (note that $f^{-1}(\gamma)$ may have components that are inessential or are essential and not homotopic to a component of $\Gamma$).  We label each vertex by the corresponding labels on the curves of $\Gamma$ and we label each edge by a natural number, the degree of $f|\delta : \delta \to \gamma$.

We can interpret the action of $f^*$ on $\Gamma$ in terms of the lifting graph.  Under $f^*$, the labels on the vertices change as follows: the new label on a vertex $v$ is the sum of $w_i/d_i$ where $w_i$ is the weight on the $i$th vertex with a directed edge pointing to $v$ and $d_i$ is the label on that edge.  

\begin{proof}[Proof of Proposition~\ref{prop:levy}]

Let $\Gamma$ be a labeled multicurve in $(\R^2,P)$ giving a minimal strong reduction system for $f$.  Let $G$ be the corresponding lifting graph.  

We first claim that each vertex of $G$ has at least one incoming edge, that is, $G$ has no initial vertices.  This follows from the stability of $\Gamma$, since an initial vertex would be a component of $\Gamma$ not parallel to a component of $f^{-1}(\Gamma)$.  

We next claim that each vertex of $G$ has at least one outgoing edge, that is, $G$ has no terminal vertices.  Indeed, suppose that a vertex $\gamma$ is terminal.  It cannot be that $\gamma$ is the only vertex of $G$, for then $G$ would have no edges, and it would be impossible for $\Gamma = \gamma$ to underly a strong reduction system.  Now, if we delete $\gamma$ from $\Gamma$, then the multicurve that remains---which is nonempty by the previous sentence---still underlies a strong reduction system for $f$, violating the minimality of $\Gamma$.  

We now claim that the set of vertices of $G$ corresponding to innermost curves of $\Gamma$ determines a closed subgraph $G^\circ$ of $G$, that is, a directed edge starting at an innermost curve ends at an innermost curve.  Suppose there is a directed edge from some curve $\gamma$ to a curve $\delta$ that is not innermost.  We will show that $\gamma$ is not innermost.  Let $\epsilon$ be a curve of $\Gamma$ in the interior of $\delta$ (and not parallel to $\delta$).  Since $G$ has no initial vertices, $\epsilon$ lies in the $f$-preimage of a curve $\phi$ of $\Gamma$.   And because $f$ maps interiors to interiors, this $\phi$ would have to lie in the interior of $\gamma$.  Also, since the components of $\Gamma$ are not parallel pairwise and since $f$ is a function, $\phi$ is not parallel to $\gamma$, and the claim is proved.

We next claim that $G^\circ$ is equal to $G$.  Suppose not.  Then the subgraph $G'$ of $G$ spanned by the vertices not in $G^\circ$ is nonempty.  We will show that $G'$ represents a strong reduction system for $f$, which will violate the minimality of $\Gamma$.  We first show that $G'$ represents a stable multicurve, and then check the condition on labels.  Since $G$ has no terminal vertices, each vertex of $G'$ is the end point of an edge of $G$.  As $G^\circ$ is closed, it must be that the edges terminating in $G'$ have origins in $G'$.  This is to say that $G'$ represents a stable multicurve for $f$.  The action of $f^*$ on the labels of $G'$ agrees with the restriction of its action on the labels of $G$, and so $G'$ does indeed represent a strong reduction system for $f$, the desired contradiction.  

We now claim that no two directed edges of $G$ have the same endpoint.  Indeed, by the previous claim all vertices of $G$ are innermost curves of $\Gamma$.  Any two innermost curves are un-nested, that is, neither lies in the interior of the other.  It follows that the components of their preimages un-nested.  In particular, the preimages cannot be parallel, whence the claim.

At this point, we have shown that $G$ has no initial or terminal vertices and that no two edges has the same endpoints.  It follows that $G$ is a union of directed cycles.  By minimality, $G$ is a single directed cycle.

If $G$ has an edge label greater than 1, then there are no positive labels of the vertices of $G$ that satisfy the condition for a strong reduction system.  Therefore all of the edges are labeled 1.  This is to say that $G$ represents a Levy cycle.  Since each curve of $\Gamma$ maps to the next with degree 1, the disks interior to these curves also map to the next with degree 1, meaning that $\Gamma$ is a degenerate Levy cycle, as desired.
\end{proof}


\section{Further extensions of the \"Ubertheorem}
\label{sec:ext}

In this third and final appendix, we explain several generalizations of the Nielsen--Thurston \"Ubertheorem.  There are three versions: for equivariant maps, for non-orientable surfaces, and for orientation-reversing maps.  All of these are straightforward extensions of the \"Ubertheorem.  In theory, we could combine all of the extensions into one Super\"ubertheorem, but for clarity we prefer to state them separately.  We also state the extensions informally, because some of the details are left to the reader.

\p{Equivariant maps} Let $\Sigma = (S,P)$, let $f : \Sigma \to \Sigma$ be a dynamical branched cover.  Let $G$ be a finite group that acts on $\Sigma$.  As usual, we say that $f$ is $G$-equivariant is $f(g \cdot x) = g \cdot f(x)$ for all $x \in S$.  For example, we say that $f$ is an odd map of $(S^2,P)$ if it is $\Z/2$-equivariant, where $\Z/2$ acts by the antipodal map.

If we assume that the map $f$ in the statement of the \"Ubertheorem is $G$-equivariant, then the \"Ubertheorem (of course) still holds, but with the added conclusion that the resulting homotopic map $\phi$ is also $G$-equivariant.  We have the following consequences:
\begin{enumerate}
\item if $\phi$ is holomorphic then $G$ preserves the complex structure,
\item if $\phi$ is strongly reducible, then $G$ preserves the strong reduction system, and
\item if $\phi$ is pseudo-Anosov, then $G$ preserves the measured foliations.
\end{enumerate}
The key observation required to prove this enhancement of the \"Ubertheorem is that the pullback of any geometric object (complex structure, strong reduction system, measured foliation, etc.) under a $G$-equivariant map is $G$-invariant.  So, for example, the image of the pullback map $\sigma_f$ is contained in the subspace of $\Teich(\Sigma)$ fixed by the action of $G$.

\p{Non-orientable surfaces} For a non-orientable, closed surface $S$, we can define a marked surface $\Sigma = (S,P)$ and a dynamical branched cover $f : \Sigma \to \Sigma$ as in the orientable case.  Such maps arise naturally even when studying dynamical branched covers of orientable surfaces.  For instance, any odd map of $\Sigma = (S^2,P)$ descends to a dynamical branched cover of $(\rpt,\bar P)$ where $\bar P$ is the image of $P$ under the quotient of $S^2$ by the antipodal map.

The natural analogue of a complex structure in this setting is a conformal structure, by which we mean a map that preserves angles, up to sign, in the tangent space.  This is equivalent to the existence of an atlas where the charts map to the complex plane and transition maps are holomorphic or anti-holomorphic.  For orientable surfaces, complex structures and conformal structures are the same thing.  

Given $f : \Sigma \to \Sigma$ for non-orientable $\Sigma$, we obtain a dynamical branched cover $\tilde f : \tilde \Sigma \to \tilde \Sigma$ of the orientation double cover $\tilde \Sigma$.  The deck group for this (characteristic) cover is $G \cong \Z/2$ and the map $\tilde f$ is $G$-equivariant.  As above the map $\tilde f$ is (up to homotopy) either holomorphic, strongly reducible, or pseudo-Anosov.  And moreover these corresponding geometric structures are $G$-invariant.  These means that $f$ is either conformal, strongly reducible, or pseudo-Anosov, giving our second extension of the \"Ubertheorem.

There is an important subtlety in the above argument.  When we modify $\tilde f$ by isotopy, we need to know that we can modify $f$ accordingly.  In other words, we need to know that the isotopy of $\tilde f$ can be pushed down to an isotopy of $f$.  

In the theory of mapping class groups, it is true that homotopic $G$-equivariant homeomorphisms are $G$-equivariantly homotopic; this fact is known as the Birman--Hilden theorem (see the expository paper by the second- and third-named authors \cite{MW}).  The analogue of the Birman--Hilden theorem does indeed hold for $G$-equivariant maps of degree greater than 1 (which is what we need here).  In fact, the Maclachlan--Harvey proof of the Birman--Hilden theorem, which is based on Teichm\"uller theory, applies almost directly to this more general case (see page 13 of the aformentioned survey for a discussion).  The only change needed is to replace all of the groups in the proof with monoids, since maps of degree greater than 1 do not have inverses.

\p{Orientation-reversing maps}  Let $\Sigma = (S,P)$ be a marked surface, and suppose that $\Sigma$ is oriented.  We say that an orientation-reversing map $f : \Sigma \to \Sigma$ is a dynamical branched cover if $f$ restricts to an (unbranched) covering space over $S \setminus P$.  One way to construct such an $f$ is to take an (orientation-preserving) dynamical branched cover $(S^2,P) \to (S^2,P)$ where $P$ is preserved by the antipodal map and post-compose with the antipodal map.   

Let $f : \Sigma \to \Sigma$ is an orientation-reversing dynamical branched cover.  We claim that $f$ is homotopic to a map that is either anti-holomorphic, strongly reducible, or pseudo-Anosov, and moreover this follows from our proof of the \"Ubertheorem.  The only required observation is that if an orientation-reversing map fixes a point in Teichm\"uller space then it is anti-holomorphic with respect to the corresponding complex structure.  For the non-exceptional cases, this statement was already stated and proved by Geyer \cite[Theorem 3.9]{geyer}.


\bibliographystyle{plain}
\bibliography{thurston}

\end{document}